\documentclass[aAP,preprint]{imsart}

\RequirePackage[OT1]{fontenc}
\RequirePackage[numbers]{natbib}
\RequirePackage[colorlinks,citecolor=blue,urlcolor=blue]{hyperref}\usepackage[latin1]{inputenc}
\usepackage{amsmath,amsfonts,amssymb,amsthm,amsopn,amscd}
\usepackage{bbm,wasysym,stmaryrd}
\startlocaldefs
\newcommand{\R}{\mathbbm{R}}

\newcommand{\Z}{\mathbbm{Z}}

\newcommand{\F}{\mathcal{F}}

\newcommand{\Exp}{\mathbb{E}} 

\newcommand{\T}{\mathcal{T}}  

\newcommand{\one}{\mathfrak{1}}

\newcommand{\erf}{\rm{erf}}

\newcommand{\Prob}{\mathbb{P}}

\newcommand{\PP}{\mathbb{P}}

\newcommand{\C}{\mathcal{C}}

\newcommand{\norm}[1]{\left\| #1 \right\|}
\newcommand{\lsup}[1]{\underset{#1\to\infty}{\overline{\lim}}}
\newcommand{\linf}[1]{\underset{#1\to\infty}{\underline{\lim}}}

\newcommand{\modd}{\text{ mod }}

\theoremstyle{plain}
\newtheorem{theorem}{Theorem}

\newtheorem{proposition}[theorem]{Proposition}
\newtheorem{lemma}[theorem]{Lemma}

\newtheorem{assumption}{Assumption}
\newtheorem{Remark}[theorem]{Remark}


\endlocaldefs

\begin{document}
\begin{frontmatter}
\title{Large Deviations of a Spatially Stationary Network of Interacting Neurons}
\runtitle{Large Deviations of a Network of Interacting Particles}
\begin{aug}
\author{\fnms{Olivier} \snm{Faugeras}\ead[label=e1]{olivier.faugeras@inria.fr}} and 
\author{\fnms{James} \snm{MacLaurin}\corref{}\ead[label=e2]{j.maclaurin@sydney.edu.au}},
\runauthor{O. Faugeras et al.}

\affiliation{NeuroMathComp\thanksmark{m1} INRIA Sophia Antipolis}

\address{
NeuroMathComp INRIA \\
2004 Route Des Lucioles\\
B.P. 93, 06902, Sophia Antipolis France\\
\printead{e1,e2}
}
\end{aug}
 
 \begin{abstract}
In this work we determine a process-level Large Deviation Principle (LDP)  for a model of interacting neurons indexed by a lattice $\Z^d$. The neurons are subject to noise, which is modelled as a correlated martingale. The probability law governing the noise is strictly stationary, and we are therefore able to find a LDP for the probability laws $\Pi^n$ governing the stationary empirical measure $\hat{\mu}^n$ generated by the neurons in a cube of length $(2n+1)$. We use this LDP to determine an LDP for the neural network model. The connection weights between the neurons evolve according to a learning rule / neuronal plasticity, and these results are adaptable to a large variety of neural network models. This LDP is of great use in the mathematical modelling of neural networks, because it allows a quantification of the likelihood of the system deviating from its limit, and also a determination of which direction the system is likely to deviate. The work is also of interest because there are nontrivial correlations between the neurons even in the asymptotic limit, thereby presenting itself as a generalisation of traditional mean-field models.
\end{abstract}
 
 \begin{keyword}[class=MSC]
\kwd[Primary ]{60F10}
\kwd[; secondary ]{60H20,92B20,68T05,82C32}
\end{keyword}

\begin{keyword}
\kwd{Large Deviations}
\kwd{ergodic}
\kwd{neural network}
\kwd{learning}
\kwd{SDE}
\kwd{lattice}
\kwd{interacting particles}
\kwd{stationary}
\kwd{process level}
\kwd{level 3}
\kwd{empirical measure}
\kwd{periodic}
\end{keyword}
 
\end{frontmatter}
\section{Introduction}
In this paper we determine a Large Deviation Principle for a strictly stationary model of interacting processes on a lattice. We are motivated in particular by the study of interacting neurons in neuroscience, but this work ought to be adaptable to other phenomena such as mathematical finance, population genetics or insect swarms. In neuroscience, neurons form complicated networks which may be studied on many levels. On the macroscopic level, neural field equations model the density of activity per space / time. They have been very successful in understanding many phenomena in the brain, including visual hallucinations \cite{ermentrout-cowan:79,bressloff-cowan-etal:02}, motion perception \cite{geise:99}, feature selectivity in the visual cortex \cite{hansel-sompolinsky:98b} and traveling waves \cite{ermentrout-mcleod:93,tuckwell:08,pinto-ermentrout:01,kilpatrick-bressloff:10,faye:13,bressloff:14}. On the microscopic level, models such as that of Hodgkin and Huxley explain the dynamics of action-potentials very accurately. One of the most important outstanding questions in mathematical neuroscience is a detailed and mathematically rigorous derivation of the macroscopic from the microscopic equations \cite{bressloff:12,touboul:14}. In particular, perhaps two of the most difficult phenomena to model are the nature of the connection strengths between the neurons, and the stochastic noise. We will discuss these further below, but before we do this we provide a brief introduction to mean-field models of neuroscience.

Classical mean-field models are perhaps the most common method used to scale up from the level of individual neurons to the level of populations of neurons \cite{baladron-fasoli-etal:12b,touboul:14}. For a group of neurons indexed from $1$ to $N$, the evolution equation of a mean field model is typically of the following form (an $\R^N$-valued SDE)
\begin{equation}
dX^j_t = \left[g(X^j_t) + \frac{1}{N}\sum_{k=1}^N h_t(X^j,X^k)\right]dt + \sigma(X^j_t)dW^j_t.\label{eq:meanfieldbasic}
\end{equation}
We set $X^j_0= 0$. Here $g$ is Lipschitz, $h$ is Lipschitz and bounded, and $\sigma$ is Lipschitz. $(W^j)$ are independent Brownian Motions representing internal / external noise. Asymptoting $N$ to $\infty$, we find that in the limit $X^j$ is independent of $X^k$ (for $j\neq k$), and each $X^j$ is governed by the same law \cite{sznitman:91}. Since the $(X^j)$ become more and more independent, it is meaningful to talk of their mean as being representative of the group as a whole. In reaching this limit, three crucial assumptions have been made: that the external synaptic noise is uncorrelated, that the connections between the neurons are homogeneous and that the connections are scaled by the inverse of the size of the system. We will relax each of these assumptions in our model (which is outlined in Section \ref{sect:NeuronModel}).

The noise has a large effect on the limiting behavior, but as already noted it is not necessarily easy to model. Manwani and Koch \cite{manwani-koch:99} distinguish three main types of noise in the brain: thermal, channel noise and synaptic noise coming from other parts of the brain. With synaptic noise in particular, it is not clear to what extent this is indeed `noise', or whether there are correlations or neural coding that we are not yet aware of. At the very least, we expect that the correlation in the synaptic noise affecting two neurons close together should be higher than the correlation in the synaptic noise affecting two neurons a long way apart. The signal output of neurons has certainly been observed to be highly correlated \cite{sompolinsky-yoon-etal:01,schneidman-berry-etal:06,averbeck-latham-etal:06}. In our model for the synaptic noise in Section \ref{Section Noise LDP}, the noise is correlated, with the correlation determined by the lattice distance between the neurons. Indeed the probability law for the noise is stationary relative to the toroidal topology of our neural network, meaning that it is invariant under rotations of the torus. 

The other major difference between the model in Section \ref{sect:NeuronModel} and the mean field model outlined above is the model of the synaptic connections. In the study of emergent phenomena of interacting particles, the nature of the connections between the particles is often more important than the particular dynamics governing each individual \cite{haken:06}. One of the reasons the synaptic connections are scaled by the inverse of the number of neurons is to ensure that the mean-field equation \eqref{eq:meanfieldbasic} has a limit as $N\to\infty$. However this assumption, while useful, appears a little \textit{ad hoc}. One might expect that the strength of the synaptic connections is independent of the population size, and rather the system does not `blowup'  for large populations because the strength of the connections decays with increasing distance. This is certainly the standard assumption in models of the synaptic kernel in neural field models \cite{bressloff:12}. Furthermore the asymptotic behaviour in the mean-field model is extremely sensitive to the scaling. For example, if we were to scale the synaptic strength by $N^{-\beta}$ for any $\beta > 1$, then the limiting asymptotic behaviour would be quite different (the limiting law would not be McKean-Vlasov). Finally there is a lot of evidence that the strength of connection evolves in time through a learning rule / neural plasticity \cite{gerstner-kistler:02,galtier:11}. We will incorporate these effects into our model of the synaptic weights, and ensure in addition that they are such that the probability law is stationary relative to the toroidal topology. We note that there already exists a literature on the asymptotic analysis of interacting diffusions, including \cite{cox-fleischmann-etal:96,liggett:05,greven-hollander:07}. Most of this literature is concerned with the ergodic behaviour in time of a countably infinite set of interacting diffusions, whereas this paper is more focussed on understanding the behaviour over a fixed time interval of an asymptotically large network of interacting neurons.

The main result of this paper is a Large Deviation Principle (LDP) for the neural network model in Section \ref{sect:NeuronModel}. This essentially gives the exponential rate of convergence towards the limit. A Large Deviation Principle is a very useful mathematical technique which allows us to estimate finite-size deviations of the system from its limit behaviour. There has been much effort in recent years to understand such finite-size phenomena in mathematical models of neural networks - see for instance \cite{bressloff:09,buice-cowan-etal:10,touboul-ermentrout:11,fasoli:13,fasoli2015complexity}. More generally, there has already been considerable work in the Large Deviations of ergodic phenomena. Donsker and Varadhan obtained a Large Deviations estimate for the law governing the empirical process generated by a Markov Process \cite{donsker-varadhan:83}. They then determined a Large Deviations Principle for an (integer-indexed) stationary Gaussian Process, obtaining a particularly elegant expression for the rate function using spectral theory. \cite{chiyonobu-kusuoka:88,deuschel-stroock-etal:91,bryc-dembo:96} obtain a Large Deviations estimate for the empirical measure generated by processes satisfying various mixing conditions and \cite{georgii1993large} obtain an LDP for stationary Gibbs Measures. \cite{baxter-jain:93} obtain a variety of results for Large Deviations of ergodic phenomena, including one for the Large Deviations of $\Z$-indexed $\R^T$-valued stationary Gaussian processes. \cite{rassoul2011process,kubota2012large,fukushima2014quenched} obtain large deviation principles for a random walk in a random environment. There also exists a literature modelling the Large Deviations and other asymptotics of weakly-interacting particle systems (see for example \cite{dawson-gartner:87,ben-arous-guionnet:95,goldys:01,dawson-del-moral:05,budhiraja-dupuis-etal:12,fischer:12,giacomin-lucon-etal:11,lucon:12,lucon-stannat:13,faugeras-maclaurin:14d,faugeras-maclaurin:15,cabana2015large,bossy2015clarification}). These are systems of $N$ particles, each evolving stochastically, and usually only interacting via the empirical measure. 

The correlations in the noise together with the inhomogeneity of the synaptic weight model mean that the limit equation as $n\to \infty$ of \eqref{eq:fundamentalmult} is not asynchronous, unlike \eqref{eq:meanfieldbasic} (see \cite{ginzburg-sompolinsky:94} for a discussion of (a)synchronicity). Indeed the neurons are potentially highly correlated, even in the large system limit. This means that the results of this paper would be well-suited for further investigation of stochastic resonance \cite{brunel-hansel:06,ostojic-brunel-etal:09,mcdonnell-ward:11,giacomin-lucon-etal:11}. Furthermore, one could obtain an LDP for the asymptotics of the synaptic weight connections $\Lambda_s^k(U^j,U^{j+k})$ through an application of the contraction principle (see \cite[Theorem 4.2.1]{dembo-zeitouni:97}) to Theorem \ref{Theorem Main LDP}. This would be of interest in understanding the asymptotics of the network architecture in the large size limit.

This paper is structured as follows. In Section \ref{sect:stochprocess} we outline a general model of interacting neurons on a lattice, and state a large deviation principle under a set of assumptions. In Section \ref{Section Proofs} we prove this theorem. In Section \ref{Section Noise LDP} we outline a model of the noise as a correlated martingale, and prove a large deviation principle for the law of the empirical measure. In Section \ref{Section Example} we outline an extended example of this theory which satisfies the assumptions of Section \ref{sect:stochprocess}. This example considers a Fitzhugh-Nagumo model of interacting neurons, with Hebbian learning on the synaptic weights and subject to the correlated noise of Section \ref{Section Noise LDP}. 
\section{Outline of Model and Preliminary Definitions}\label{sect:stochprocess}
In this section we start by outlining our finite model of $(2n+1)^d$ stationary interacting neurons indexed over $V_n$. In Section \ref{Section Assumptions} we outline our assumptions on the model. The main result of this paper is in Theorem \ref{Theorem Main LDP}. 
\subsection{Preliminaries}\label{Subsection Preliminaries}
We must first make some preliminary definitions. Let $(\Omega,\F,\Prob)$ be a complete probability space. If $X$ is some separable topological space, then we denote the $\sigma$-algebra generated by the open sets by $\mathcal{B}(X)$, and the set of all probability measures on $(X,\mathcal{B}(X))$ by $\mathcal{P}(X)$. We endow $\mathcal{P}(X)$ with the topology of weak convergence. 

Elements of the processes in this paper are indexed by the lattice points $\Z^d$: for $j\in\Z^d$ we write $j = (j(1),\ldots,j(d))$. Let $V_n \subset \Z^d$ be such that $j\in V_n$ if $|j(m)| \leq n $ for all $1\leq m \leq d$. The number of elements in $V_n$ is written as $|V_n| := (2n+1)^d$.

We assume that the state space for each neuron is $\R$. For any $s\in [0,T]$, we endow  $\mathcal{C}([0,s],\R)$ with the norm $\norm{U}_s := \sup_{r\in [0,s]}|U_r|$.  Write $\mathcal{T} := \mathcal{C}\big([0,T],\R\big)$. Let $\lbrace\lambda^j\rbrace_{j\in\Z^d}$ be a set of weights, satisfying $\lambda^j > 0$ and $\sum_{j\in\Z^d}\lambda^j = 1$. We make further assumptions about $\lbrace \lambda^j\rbrace$ at the start of Section \ref{Section Proofs}. Let $\T^{\Z^d}_\lambda$ be the separable Banach Space of all $U := (U^j)_{j\in\Z^d} \in \T^{\Z^d}$ such that the following norm is finite
\begin{equation}
\norm{U}_{T,\lambda} := \sqrt{\sum_{j\in\Z^d}\lambda^j \norm{U^j}_T^2} < \infty.
\end{equation}
Let $\pi^{V_m}: \T^{\Z^d}\to \T^{V_m}$ be the projection $\pi^{V_m}(X) := (X^j)_{j\in V_m}$. It can be checked that the embedding $\T^{\Z^d}_\lambda \hookrightarrow \T^{\Z^d}$ is continuous when $\T^{\Z^d}$ is endowed with the cylindrical topology (generated by sets $O\subset \T^{\Z^d}$ such that $\pi^{V_m}O$ is open in $\T^{V_m}$). Let $d^{\lambda,\mathcal{P}}$ be the Levy-Prokhorov metric on $\mathcal{P}(\T_{\lambda}^{\Z^d})$ generated by the norm $\norm{\cdot}_{T,\lambda}$ on $\T^{\Z^d}_\lambda$.
\subsection{Outline of Model and Main Result}\label{sect:NeuronModel}

For $n\in\Z^+$, there are $|V_n|$ neurons in our network. There are three components to the dynamics of our neural network model: the internal dynamics term $\mathfrak{b}_s$, the interaction term $\Lambda^k_s(U^j,U^{(j+k)\modd V_n})$ and the noise term $W^{n,j}_t$. The form of our interaction term differs from standard mean-field models in that it is not scaled by some function of $|V_n|$, and it is not homogenous throughout the network. Rather the function itself depends on the lattice distance $k$ between the neurons (the distance being taken modulo $V_n$). We must make some assumptions on the behaviour of $\Lambda^k_t$ when $|k|$ is large to ensure that the system is convergent (see Assumption \ref{Assumption NonUniform}). The interaction $\Lambda^k_t$ can also be a function of the past activity, which allows us to incorporate both delays in the signal transmission and a learning model for the synaptic weights.

The form of the interaction explains why we work in the weighted space $\T^{\Z^d}_\lambda$ rather than $\T^{\Z^d}$. We will choose the weights $(\lambda^j)_{j\in\Z^d}$ carefully so that they dominate the interaction terms (in a certain sense). The mapping from $W^n \to U$ of the noise to the solution (which we define to be $\Psi^n$ in \eqref{eq:fundamentalmult 3}) will then be Lipschitz, uniformly in $n$, relative to the topology of $\T^{\Z^d}_\lambda$. If we were to work in the space $\T^{\Z^d}$ endowed with the cylindrical topology, then the mappings $\Psi^n$ and $\Psi$ ($\Psi$ is the limit  as $n\to \infty$ of $\Psi^n$) would not (in general) be continuous, even if the interactions were zero beyond some fixed lattice distance. We thus choose to work in $\T^{\Z^d}_\lambda$ because we can take advantage of the fact that LDPs are preserved under continuous maps . In any case the LDP over $\T^{\Z^d}_\lambda$ of Theorem \ref{Theorem Main LDP} is a stronger result; the LDP over $\T^{\Z^d}$ under the cylindrical topology is an immediate corollary of this, as we note in Remark \ref{Remark 1}. 
 
The system we study in this paper is governed by the following evolution equation: for $j\in V_n$,
\begin{equation}\label{eq:fundamentalmult}
U^{j}_t = U_{ini} + \int_0^t \bigg(\mathfrak{b}_s(U^{j}) + \sum_{k\in V_n}\Lambda^{k}_s(U^j,U^{(j+k)\modd V_n})\bigg)ds + W^{n,j}_{t}.
\end{equation}

Here $(j+k)\modd V_n := l\in V_n$, such that $(j(p)+k(p)) \modd (2n+1) = l(p)$ for all $1\leq p\leq d$. Thus one may think of the neurons as existing on a torus. This is what we meant when we stated in the introduction that the network has a `toroidal topology'. $U_{ini} \in \R$ is some constant. It follows from Lemmas \ref{lem: Psin Psi} and \ref{Lemma Solution Equivalence} further below that there exists a unique solution to \eqref{eq:fundamentalmult} $\PP$-almost surely. The thrust of this article is to understand the asymptotic behaviour of the network as $n\to \infty$.

We assume that $W^{n} := \big(W^{n,j}_{t}\big)_{j\in V_n,t\in [0,T]}$ is a $\mathcal{T}^{V_n}$-valued random variable such that $W^{n,j}_0 = 0$. In Section \ref{Section Noise LDP}, we outline an example model for the $(W^{n,j}_{t})$ where for any two $j,k\in V_n$, $W^{n,j}_t$ is correlated with $W^{n,k}_t$, and each $W^{n,j}_t$ is a martingale in time. However this model is not necessary for Theorem \ref{Theorem Main LDP} to be valid. It is important to be aware that for some fixed $m\in\Z^+$, the law of $(W^{n,j})_{j\in V_m}$ may vary with $n$ (which is the case for the example in Section \ref{Section Noise LDP}, where the correlations are modulo $V_n$). 

Let $S^k:\T^{\Z^d} \to \T^{\Z^d}$ (for some $k\in\Z^d$) be the shift operator (i.e. $(S^k x)^m := x^{m+k}$). Denote the empirical measure $\hat{\mu}^n: \T^{V_n} \to \mathcal{P}(\T_{\lambda}^{\Z^d})$ by
\begin{equation}
\label{defn empirical measure}
\hat{\mu}^n(X) := \frac{1}{|V_n|}\sum_{j\in V_n}\delta_{S^j \tilde{X}},
\end{equation}
where $\tilde{X}\in\T^{\Z^d}_\lambda$ is the $V_n$-periodic interpolant, i.e. $\tilde{X}^j := \tilde{X}^{j \modd V_n}$. If $X \in \T^{\Z^d}$, then in a slight abuse of notation we write $\hat{\mu}^n(X) := \hat{\mu}^n(\pi^{V_n}X)$. It may be noted that $\hat{\mu}^n$ is stationary, i.e. $\hat{\mu}^n(X)\circ (S^j)^{-1} = \hat{\mu}^n(X)$ for any $j\in\Z^d$.

We now outline our main result.
\begin{theorem}\label{Theorem Main LDP}
Let the law of $\hat{\mu}^n(U)$ be $\Pi^n \in \mathcal{P}(\mathcal{P}(\T_{\lambda}^{\Z^d}))$. Under the assumptions outlined in Section \ref{Section Assumptions}, $(\Pi^n)_{n\in\Z^+}$ satisfy a Large Deviation Principle with good rate function $I$ (i.e. $I$ has compact level sets). That is, for all closed subsets $A$ of $\mathcal{P}\big(\T_{\lambda}^{\Z^d}\big)$,
\begin{equation}\label{LDP closed}
\lsup{n}\frac{1}{|V_n|}\log\Pi^n(A) \leq - \inf_{\gamma\in A}I(\gamma).
\end{equation}
For all open subsets $O$ of $\mathcal{P}\big(\T_{\lambda}^{\Z^d}\big)$,
\begin{equation}
\linf{n}\frac{1}{|V_n|}\log\Pi^n(O) \geq - \inf_{\gamma\in O}I(\gamma).\label{LDP open}
\end{equation}
\end{theorem}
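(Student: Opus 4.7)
The strategy is to derive the LDP for $\Pi^n$ from the LDP for the empirical measure of the noise (established in Section \ref{Section Noise LDP}) by means of an essentially continuous pushforward. The key object is the solution map: to the finite-network map $\Psi^n:\T^{V_n}\to\T^{V_n}$ sending $W^n$ to the unique solution $U$ of \eqref{eq:fundamentalmult}, one associates an infinite-volume analogue $\Psi:\T^{\Z^d}_\lambda\to\T^{\Z^d}_\lambda$ sending any $W\in\T^{\Z^d}_\lambda$ to the unique pathwise solution of
\begin{equation*}
U^j_t = U_{ini} + \int_0^t\Bigl(\mathfrak{b}_s(U^j) + \sum_{k\in\Z^d}\Lambda^k_s(U^j, U^{j+k})\Bigr)ds + W^j_t, \quad j\in\Z^d.
\end{equation*}
Under Assumption \ref{Assumption NonUniform}, the weights $(\lambda^j)$ dominate the interactions, so $\Psi$ is well-defined and globally Lipschitz on $\T^{\Z^d}_\lambda$ via a Picard/Gronwall argument in the norm $\norm{\cdot}_{T,\lambda}$. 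Crucially, $\Psi$ commutes with every shift $S^k$, so it maps stationary laws to stationary laws.

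Let $\nu^n:=\hat{\mu}^n(W^n)\in\mathcal{P}(\T^{\Z^d}_\lambda)$ denote the empirical measure of the periodised noise, and define the pushforward $\Phi:\mathcal{P}(\T^{\Z^d}_\lambda)\to\mathcal{P}(\T^{\Z^d}_\lambda)$ by $\Phi(\mu):=\mu\circ\Psi^{-1}$. Because $\Psi$ is Lipschitz, $\Phi$ is continuous for $d^{\lambda,\mathcal{P}}$, and in particular sends compact sets to compact sets. Taking the LDP for $(\nu^n)$ from Section \ref{Section Noise LDP} with good rate function $J$ and applying the contraction principle (\cite[Theorem 4.2.1]{dembo-zeitouni:97}) would then yield a good rate function
\begin{equation*}
I(\gamma) = \inf\bigl\{J(\nu) : \Phi(\nu) = \gamma\bigr\},
\end{equation*}
and the upper and lower bounds \eqref{LDP closed}--\eqref{LDP open} on $\Pi^n$ would follow directly, provided one can identify $\Pi^n$ with the law of $\Phi(\nu^n)$.

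The main obstacle is precisely that identification: the empirical measure $\hat{\mu}^n(U)=\hat{\mu}^n(\Psi^n(W^n))$ is built from the torus map $\Psi^n$ (interactions summed modulo $V_n$), whereas $\Phi(\nu^n)$ applies the infinite-lattice map $\Psi$ to the $V_n$-periodised noise. I would bridge the gap by proving an exponential equivalence: show that for every $\varepsilon>0$,
\begin{equation*}
\lsup{n}\frac{1}{|V_n|}\log \Prob\bigl(d^{\lambda,\mathcal{P}}(\hat{\mu}^n(\Psi^n(W^n)),\Phi(\nu^n))>\varepsilon\bigr) = -\infty.
\end{equation*}
This reduces, via a Gronwall estimate using the uniform Lipschitz constant coming from $(\lambda^j)$, to controlling the discrepancy between $\sum_{k\in V_n}\Lambda^k_s(U^j,U^{(j+k)\,\mathrm{mod}\,V_n})$ and $\sum_{k\in\Z^d}\Lambda^k_s(U^j,U^{j+k})$ on a set of $j$ of density $1$ in $V_n$, which is handled by the tail decay of $\Lambda^k$ in $|k|$ encoded in Assumption \ref{Assumption NonUniform} together with a coarse Gaussian/exponential tail bound on $\norm{W^n}_{T,\lambda}$ coming from the noise model. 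Once this exponential equivalence is established, standard large deviation machinery (\cite[Theorem 4.2.13]{dembo-zeitouni:97}) transfers the LDP from $\Phi(\nu^n)$ to $\hat{\mu}^n(U)$, completing the proof.
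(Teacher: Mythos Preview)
Your proposal is correct and follows essentially the same route as the paper: transfer the assumed LDP for $(\Pi^n_W)$ (this is Assumption~\ref{Assumption Noise LDP}, not specifically the construction of Section~\ref{Section Noise LDP}) through the Lipschitz map $\Psi$, reducing the whole argument to the exponential-equivalence estimate $d^{\lambda,\mathcal{P}}\bigl(\hat{\mu}^n(W^n)\circ(\Psi^n)^{-1},\hat{\mu}^n(W^n)\circ\Psi^{-1}\bigr)\to 0$ super-exponentially. The paper packages your two steps (contraction then exponential equivalence) into a single application of \cite[Corollary 4.2.21]{dembo-zeitouni:97}, and---rather than a density-$1$ argument---exploits the $V_n$-periodicity of both $\Psi^n(\tilde{W}^n)$ and $\Psi(\tilde{W}^n)$ (Lemma~\ref{Lemma Solution Equivalence}) to reduce the L\'evy--Prokhorov bound directly to $\sum_{j\in V_n}\|\Psi(\tilde{W}^n)^j-\Psi^n(\tilde{W}^n)^j\|_T$, which is then controlled by Gronwall and the tail bound~\eqref{eq: a limit}.
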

From now on, if a sequence of probability laws satisfies \eqref{LDP closed}  and \eqref{LDP open} for some $I$ with compact level sets, then to economise space we say that it satisfies an LDP with a good rate function.

\begin{Remark}
We make some brief comments on the form of the rate function $I$. It is infinite outside the set of all stationary measures: that is if $\mu \in \mathcal{P}(\T^{\Z^d}_\lambda)$ is such that $\mu \circ (S^j)^{-1} \neq \mu$ for some shift $S^j$, then $I(\mu) = \infty$. This is because $\hat{\mu}^n(U)$ is stationary and the set of all stationary measures is closed. In many circumstances one could use results concerning the specific relative entropy to obtain a convenient expression for the rate function $I$: see for example \cite{georgii1993large}. 
\end{Remark}

\begin{Remark}\label{Remark 1}
Because of the continuity of the embeddings $\T^{\Z^d}_\lambda \hookrightarrow \T^{\Z^d}$ and $\mathcal{P}(\T^{\Z^d}_\lambda) \hookrightarrow \mathcal{P}(\T^{\Z^d})$, we may  infer an LDP for $(\Pi^n)_{n\in\Z^+}$ relative to the weak topology on $\mathcal{P}(\T^{\Z^d})$ induced by the cylindrical topology on $\T^{\Z^d}$. This follows directly from an application of the Contraction Principle \cite[Theorem 4.2.21]{dembo-zeitouni:97} to Theorem \ref{Theorem Main LDP}.
\end{Remark} 

\begin{Remark}
It is worth noting that the modulo $V_n$ form of the interaction is not essential to get an LDP of this type. One could for example obtain a similar result to Theorem \ref{Theorem Main LDP} through replacing $\Lambda^k_s(U^j,U^{(j+k)\modd V_n})$ in \eqref{eq:fundamentalmult} with $0$ if $(j+k) \notin V_n$. We have chosen this form because we find it more elegant.
\end{Remark}
\subsection{Assumptions}\label{Section Assumptions}

We employ the following assumptions. 

Let the law of $\hat{\mu}^n(W^n)$ be $\Pi^n_W \in \mathcal{P}\big(\mathcal{P}(\T^{\Z^d}_\lambda)\big)$. We will obtain the LDP for $(\Pi^n)_{n\in\Z^+}$ by applying a series of transformations to the LDP for $(\Pi^n_W)_{n\in\Z^+}$ (which we assume below). In Section \ref{Section Noise LDP} we outline an example of a model of the noise which satisfies these assumptions (refer in particular to Theorem \ref{Theorem PinW}). In particular, the condition \eqref{eq: a limit} is proved in Lemma \ref{exp bound Wn} in Section \ref{Section Noise LDP} .
\begin{assumption}\label{Assumption Noise LDP}
 The series of laws $(\Pi_W^n)_{n\in \Z^+}$ is assumed to satisfy a Large Deviation Principle with good rate function. It is assumed that
\begin{equation}\label{eq: a limit}
\lim_{a\to\infty}\lsup{n}\frac{1}{|V_n|}\log\Prob\left(\sum_{j\in V_n}\norm{W^{n,j}}_T > a |V_n|\right) = -\infty.
\end{equation}
\end{assumption}

In many neural models, such as the Fitzhugh-Nagumo model in Section \ref{Section Example}, the internal dynamics term $\mathfrak{b}_s$ is not Lipschitz. In particular, $\mathfrak{b}_s$ is usually strongly decaying when the activity is greatly elevated, so that $\mathfrak{b}_s$ always acts to restore the neuron to its resting state. This decay is necessary in order for the neurons to exhibit their characteristic `spiking' behaviour. The following assumptions can accommodate this non-Lipschitz behaviour.
\begin{assumption}\label{Assumption Absolute Bound}
Assume that $\mathfrak{b}_t$ is continuous on $[0,T]\times\mathcal{T}$, that it is \newline$\mathcal{B}(\R) / \mathcal{B}\big(\mathcal{C}([0,t],\R)\big)$ measurable and that for each positive constant $A$,
\begin{equation}
\sup_{t\in [0,T], \lbrace X \in \T: \norm{X}_T \leq A\rbrace}|\mathfrak{b}_t(X)| < \infty.
\end{equation}
There exists a positive constant $\tilde{C}$ such that if $Z^j_t \geq 0$, then
\begin{equation*}
\mathfrak{b}_t(Z^j)  \leq \tilde{C}\norm{Z^j}_t
\end{equation*}
and if $Z^j_t \leq 0$, then
\begin{equation*}
\mathfrak{b}_t(Z^j)  \geq -\tilde{C}\norm{Z^j}_t.
\end{equation*}
If $X^j_t \geq Z^j_t$, then
\begin{equation*}
\mathfrak{b}_t(X^j) - \mathfrak{b}_t(Z^j) \leq \tilde{C}\norm{X^j - Z^j}_t,
\end{equation*}
and if $X^j_t \leq Z^j_t$, then
\begin{equation*}
\mathfrak{b}_t(X^j) - \mathfrak{b}_t(Z^j) \geq -\tilde{C}\norm{X^j - Z^j}_t.
\end{equation*}
\end{assumption}

The interactions $\Lambda^k_s(\cdot,\cdot)$ are also typically nonlinear. See for example the model of the interactions in \cite{baladron-fasoli-etal:12b}, and also the example model in Section \ref{Section Example}. Unlike in mean-field models, the interaction between any two neurons is independent of the size of the network. However the interactions must ultimately decay as the lattice distance between the pre and post synaptic neurons increases. We use the positive constants $(\kappa^k)_{k\in\Z^d}$ to bound the interaction terms $\Lambda^k_s$. The constants must satisfy
\begin{equation}\label{eq: kappa sum}
\sum_{k\in \Z^d} \kappa^k := \kappa_* < \infty,
\end{equation}
as well as the following assumptions. We assume that $\kappa^k > 0$ for all $k\in \Z^d$. We also assume that if $j(p) = \pm k(p)$ for all $p\in 1,\ldots,d$, then 
\begin{equation}\label{eq kappa jk}
\kappa^j = \kappa^k.
\end{equation}
The following assumptions on the interactions are the reason why we work in $\T^{\Z^d}_\lambda$ (rather than for example $\T^{\Z^d}$ endowed with the cylindrical topology). We will choose the weights $(\lambda^j)_{j\in\Z^d}$ carefully so that, in a certain sense, they dominate the bounds $(\kappa^k)_{k\in\Z^d}$ on the interaction strength. This is the content of Lemma \ref{Lemma Switch Inequality} further below.

\begin{assumption}\label{Assumption NonUniform}
Assume that for all $k\in \Z^d$, $\Lambda^k_s(\cdot,\cdot)$ is continuous on $[0,T]\times \T \times \T$, and for each $s\in [0,T]$, $\Lambda^k_s$ is $\mathcal{B}(\R) / \mathcal{B}\big(\mathcal{C}([0,s],\R)\big)\times \mathcal{B}\big(\mathcal{C}([0,s],\R)\big)$-measurable. For all $U,X,Z \in \T$ and $k\in\Z^d$
\begin{align*} 
\big|\Lambda^{k}_t(U,X) - \Lambda_t^{k}(U,Z)\big| &\leq \kappa^k \norm{X-Z}_t.\\
\big| \Lambda^{k}_t(U,X) - \Lambda_t^{k}(Z,X)\big| &\leq \kappa^k\norm{U-Z}_t.
\end{align*}
We assume the following absolute bound on the weights,
\begin{equation*}
\big|\Lambda^k_t(Z^j,Z^{j+k})\big| \leq \kappa^k \big(1+\norm{Z^j}_t + \norm{Z^{j+k}}_t\big).
\end{equation*}
\end{assumption}
Let $\bar{\kappa}_n = \sum_{k\notin V_n}\kappa^k$. By \eqref{eq: kappa sum}, $\bar{\kappa}_n \to 0$ as $n\to \infty$. For notational ease we assume that $\tilde{C}+\kappa_* \leq C$. We may therefore infer the following identities directly from the above assumptions. If $Z^j_t \geq 0$,
\begin{equation}\label{eq bound 1 start}
\mathfrak{b}_t(Z^j) + \sum_{k\in\Z^d}\Lambda_t^k(Z^j,Z^{j+k}) \leq C \norm{ Z^j}_t + \kappa_* + \sum_{k\in\Z^d}\kappa^k \norm{Z^{j+k}}_t,
\end{equation}
and if $Z^j_t \leq 0$,
\begin{equation}\label{eq bound 2 start}
\mathfrak{b}_t(Z^j) + \sum_{k\in\Z^d}\Lambda_t^k(Z^j,Z^{j+k}) \geq -C \norm{ Z^j}_t -\kappa_*  - \sum_{k\in\Z^d}\kappa^k\norm{Z^{j+k}}_t.
\end{equation}
Similarly, if $X^j_t - Z^j_t \geq 0$, then
\begin{equation}\label{eq bound 3 start}
\mathfrak{b}_t(X^j) - \mathfrak{b}_t(Z^j) + \sum_{k\in\Z^d}\big(\Lambda_t^k(X^j,X^{j+k})-\Lambda_t^k(Z^j,X^{j+k})\big) \leq C \norm{ X^j - Z^j}_t.
\end{equation}
If $X^j_t - Z^j_t \leq 0$, then
\begin{equation}\label{eq bound 4 start}
\mathfrak{b}_t(X^j) - \mathfrak{b}_t(Z^j) + \sum_{k\in\Z^d}\big(\Lambda_t^k(X^j,X^{j+k})-\Lambda_t^k(Z^j,X^{j+k})\big) \geq -C \norm{X^j - Z^j}_t.
\end{equation}
\section{Proofs}\label{Section Proofs}
We start by carefully defining the weights $(\lambda^j)_{j\in\Z^d}$. The weights must be chosen so that they `commute' with the bounds $\kappa^k$ of Section \ref{Section Assumptions}. This is the main content of Lemma \ref{Lemma Switch Inequality}. After this we will prove Theorem \ref{Theorem Main LDP}, using \cite[Corollary 4.2.21]{dembo-zeitouni:97}. We will be able to use this result because the maps $\Psi^n$ and $\Psi$ which map the noise $W^n$ to the neural activity $U$, as defined in \eqref{eq:fundamentalmult 2}-\eqref{eq:fundamentalmult 3}, are uniformly Lipschitz with respect to the norm on $\T^{\Z^d}_\lambda$.

We denote the Fourier transform with a tilde; hence we write for example $\tilde{\kappa}(\theta) = \sum_{j\in\Z^d}\exp\left(-i\langle \theta,j\rangle\right)\kappa^{j}$ (for $\theta \in [-\pi,\pi]^d$). It follows from \eqref{eq kappa jk} that $\tilde{\kappa}(\theta) \in \R$. For $\theta \in [-\pi,\pi]^d$, and recalling that $\kappa_* = \sum_{j\in\Z^d}\kappa^j $, define
\begin{align}
\tilde{\lambda}(\theta) &= h\left(2\kappa_* - \tilde{\kappa}(\theta)\right)^{-1}, \text{ and let }\label{eq lambda j defn 0}\\
\lambda^j &= \frac{1}{(2\pi)^d}\int_{[-\pi,\pi]^d}\exp\left(i\langle \theta,j\rangle\right)\tilde{\lambda}(\theta)d\theta,\label{eq lambda j defn}
\end{align}
assuming that $h$ is scaled such that
\begin{equation}\label{eq scale h lambda}
\frac{1}{(2\pi)^d}\int_{[-\pi,\pi]^d}\tilde{\lambda}(\theta)d\theta = 1.
\end{equation}
Note that the Fourier Series decomposition means that for $\theta \in [-\pi,\pi]^d$,
$\tilde{\lambda}(\theta) = \sum_{j\in\Z^d}\exp\left(-i\langle \theta,j\rangle\right)\lambda^{j}$.

The one-dimensional version of inequality \eqref{eq: lambdakappa2} in the following Lemma has been proved in \cite[Lemma 4.2]{shiga-shimizu:80}.
\begin{lemma}\label{Lemma Switch Inequality}
For all $j\in\Z^d$, $\lambda^j > 0$, and
\begin{equation}
\sum_{k\in\Z^d}\lambda^{j-k}\kappa^k \leq 2\kappa_*\lambda^j. \label{eq: lambdakappa2}
\end{equation}
Finally,
\begin{equation}\label{eq total lambda}
\sum_{j\in\Z^d}\lambda^j = 1.
\end{equation}
\end{lemma}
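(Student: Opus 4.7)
The plan is to use the Fourier representation \eqref{eq lambda j defn 0} to turn the convolution inequality into an algebraic identity on the spectral side, and to obtain positivity via a geometric series expansion.

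First I would check that $\tilde{\lambda}$ is well-defined. Since $\kappa^k > 0$ and $\sum_k \kappa^k = \kappa_*$, we have $|\tilde{\kappa}(\theta)| \leq \kappa_*$ for every $\theta$, so $2\kappa_* - \tilde{\kappa}(\theta) \in [\kappa_*,3\kappa_*]$, and $\tilde{\lambda}$ is a strictly positive continuous function on $[-\pi,\pi]^d$. Hence $\lambda^j$ is well-defined by \eqref{eq lambda j defn} and the Fourier series $\tilde{\lambda}(\theta) = \sum_j \lambda^j e^{-i\langle\theta,j\rangle}$ holds.

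For positivity of $\lambda^j$, I would expand
\begin{equation*}
\tilde{\lambda}(\theta) \;=\; \frac{h}{2\kappa_*}\sum_{m=0}^\infty \left(\frac{\tilde{\kappa}(\theta)}{2\kappa_*}\right)^m,
\end{equation*}
which converges uniformly because $|\tilde{\kappa}(\theta)/(2\kappa_*)| \leq 1/2$. Now $\tilde{\kappa}(\theta)^m$ is the Fourier series of the $m$-fold convolution $\kappa^{*m}$ (with $\kappa^{*0}$ the Kronecker delta at $0$), and each such convolution has nonnegative entries because every $\kappa^k \geq 0$. Inverting the Fourier series termwise gives
\begin{equation*}
\lambda^j \;=\; \sum_{m=0}^\infty \frac{h}{(2\kappa_*)^{m+1}}\kappa^{*m}(j) \;\geq\; \frac{h}{(2\kappa_*)^2}\kappa^j \;>\; 0,
\end{equation*}
using the $m=1$ term and the hypothesis $\kappa^k > 0$ for all $k$.

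For \eqref{eq: lambdakappa2} I would observe that the Fourier series of the convolution $(\lambda*\kappa)^j := \sum_k \lambda^{j-k}\kappa^k$ equals $\tilde{\lambda}(\theta)\tilde{\kappa}(\theta)$, and compute
\begin{equation*}
2\kappa_*\tilde{\lambda}(\theta) - \tilde{\lambda}(\theta)\tilde{\kappa}(\theta) \;=\; \tilde{\lambda}(\theta)\bigl(2\kappa_* - \tilde{\kappa}(\theta)\bigr) \;=\; h.
\end{equation*}
Since the constant function $h$ on $[-\pi,\pi]^d$ is the Fourier series of $h$ times the Kronecker delta at $0 \in \Z^d$, inverting yields $2\kappa_*\lambda^j - (\lambda*\kappa)^j = h\,\delta^j_0$. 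Thus \eqref{eq: lambdakappa2} holds with equality for $j\ne 0$ and with slack $h > 0$ for $j=0$. Finally, \eqref{eq total lambda} follows from $\sum_j \lambda^j = \tilde{\lambda}(0) = h/\kappa_*$ once the normalisation constant is fixed so that this equals $1$, i.e.\ $h = \kappa_*$.

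The main obstacle is the positivity of $\lambda^j$, since the inverse Fourier transform of a strictly positive real-valued function need not be nonnegative. The geometric series expansion, available because $\tilde{\kappa}/(2\kappa_*)$ has sup-norm at most $1/2$, bypasses this by reducing positivity of $\lambda^j$ to the elementary fact that convolutions of nonnegative sequences remain nonnegative.
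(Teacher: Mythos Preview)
Your proof is correct and follows essentially the same approach as the paper: the geometric series expansion of $\tilde{\lambda}$ together with the convolution formula to obtain $\lambda^j > 0$, and reading off the $j$th Fourier coefficient of the identity $\tilde{\lambda}(\theta)(2\kappa_* - \tilde{\kappa}(\theta)) = h$ to get \eqref{eq: lambdakappa2}. Your treatment is slightly more explicit (the lower bound $\lambda^j \geq h\kappa^j/(2\kappa_*)^2$ and the computation $\sum_j \lambda^j = \tilde{\lambda}(0) = h/\kappa_*$), but the underlying argument is the same.
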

\begin{proof}
We start with the first statement. Through a Taylor Expansion, we see that
\begin{equation}\label{eq Taylor 1}
\tilde{\lambda}(\theta) = \frac{h}{2\kappa_*}\sum_{k=0}^\infty 2^{-k}\kappa_*^{-k} \left(\tilde{\kappa}(\theta)\right)^k.
\end{equation}
Now for any two functions $\tilde{f},\tilde{g}$ with absolutely convergent Fourier Series, if for all $j\in\Z^d$ $f^j > 0$ and $g^j > 0$, then each Fourier coefficient of the multiplication $(\tilde{f}\tilde{g})$ is also strictly greater than zero. This is because of the convolution formula, with the $j^{th}$ Fourier coefficient of $\tilde{f}\tilde{g}$ equal to
\[
\sum_{k\in\Z^d} f^{j-k}g^k > 0.
\]
Hence the Fourier Coefficients of each term in \eqref{eq Taylor 1} of the form $\tilde{\kappa}(\theta)^k$ are greater than zero (because $\kappa^k > 0$ for all $k\in\Z^d$). This means that the Fourier Coefficients of $\tilde{\lambda}$ are greater than zero. 

The second identity \eqref{eq: lambdakappa2} follows from taking the $j^{th}$ Fourier Coefficient of both sides of the equation
\begin{equation*}
-\tilde{\lambda}(\theta) \left(2\kappa_* - \tilde{\kappa}(\theta)\right)= -h,
\end{equation*}
which derives from \eqref{eq lambda j defn 0}. Upon doing this, we find that $\sum_{k\in\Z^d}\lambda^{j-k}\kappa^k - 2\kappa_*\lambda^j$ is the $j^{th}$ Fourier coefficient of $-h$, which by definition is strictly less than zero if $j=0$, else otherwise is zero.

The final identity \eqref{eq total lambda} follows directly from \eqref{eq scale h lambda}.
\end{proof}

Let $\bar{\T}_\lambda^{\Z^d} = \lbrace w \in \T^{\Z^d}_\lambda | w^j_0 = 0\rbrace$. Define $\Psi^n,\Psi: \bar{\T}^{\Z^d}_\lambda \to \T^{\Z^d}_\lambda$ as follows. Writing $\Psi^n(w) := X$ and $\Psi(w) := Z$, for any $j\in\Z^d$ and $t\in [0,T]$,
\begin{align}\label{eq:fundamentalmult 2}
X^{j}_t &:= U_{ini} + \int_0^t \bigg(\mathfrak{b}_s(X^{j}) + \sum_{k\in V_n}\Lambda^{k}_s(X^j,X^{j+k})\bigg)ds + w^j_{t} \\
Z^{j}_t &:= U_{ini} + \int_0^t \bigg(\mathfrak{b}_s(Z^{j}) + \sum_{k\in \Z^d}\Lambda^{k}_s(Z^j,Z^{j+k})\bigg)ds + w^j_{t}.\label{eq:fundamentalmult 3}
\end{align}
\begin{lemma}\label{lem: Psin Psi}
$\Psi,\Psi^n: \bar{\T}^{\Z^d}_\lambda \to \T^{\Z^d}_\lambda$ are well-defined and unique.
\end{lemma}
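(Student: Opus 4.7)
The plan is a Picard-iteration argument in the weighted Banach space $\T^{\Z^d}_\lambda$, with the critical technical input being the ``switch'' inequality \eqref{eq: lambdakappa2} of Lemma \ref{Lemma Switch Inequality}. I focus on $\Psi$ (the equation \eqref{eq:fundamentalmult 3}); the argument for $\Psi^n$ is identical---in fact easier, being essentially finite-dimensional---upon noticing that replacing $\kappa^k$ by $\kappa^k \mathbbm{1}_{k\in V_n}$ preserves \eqref{eq: kappa sum} and \eqref{eq kappa jk}. Given $w \in \bar{\T}^{\Z^d}_\lambda$, let $Z$ be any solution of \eqref{eq:fundamentalmult 3}. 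Splitting on the sign of $Z^j_s - w^j_s - U_{ini}$ and applying \eqref{eq bound 1 start}--\eqref{eq bound 2 start}, then taking suprema in $t$, I obtain the a priori pointwise estimate
\[
\norm{Z^j}_t \le |U_{ini}| + \norm{w^j}_T + \kappa_* T + C\int_0^t \norm{Z^j}_s\, ds + \int_0^t \sum_{k\in\Z^d}\kappa^k \norm{Z^{j+k}}_s\, ds.
\]

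Squaring, multiplying by $\lambda^j$, and summing in $j$, the ``diagonal'' terms give straightforward contributions to a Gronwall inequality for $B(t) := \sum_j \lambda^j \norm{Z^j}_t^2$. The cross term requires the key manoeuvre: Cauchy--Schwarz on the inner $k$-sum with $\kappa$ as weight (using $\sum_k \kappa^k = \kappa_*$) bounds it by $\kappa_* T \int_0^t \sum_j \lambda^j \sum_k \kappa^k \norm{Z^{j+k}}_s^2\,ds$; the reindexing $m := j+k$ rewrites the double sum as $\sum_m \norm{Z^m}_s^2 \sum_k \lambda^{m-k}\kappa^k$; and \eqref{eq: lambdakappa2} bounds the inner sum by $2\kappa_*\lambda^m$, so the cross term is controlled by a multiple of $\int_0^t B(s)\,ds$. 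Gronwall then gives $B(T) \le K(1+\norm{w}_{T,\lambda}^2)$ for a constant $K$ depending only on $|U_{ini}|,C,\kappa_*$ and $T$.

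For existence, I set $Z^{(0),j}_t := U_{ini} + w^j_t$ and iterate via the right-hand side of \eqref{eq:fundamentalmult 3}. The a priori argument applied to each $Z^{(m+1)}$ in terms of $Z^{(m)}$ shows that $\sup_m \norm{Z^{(m)}}_{T,\lambda}$ is finite, and rerunning the same squaring/summing/switch manoeuvre on the Lipschitz estimates of Assumption \ref{Assumption NonUniform} yields $\phi_m(t) \le K' \int_0^t \phi_{m-1}(s)\, ds$ for $\phi_m(t) := \sum_j \lambda^j \norm{Z^{(m+1),j} - Z^{(m),j}}_t^2$. Iterating gives $\phi_m(T) \le (K'T)^m \phi_0(T)/m!$, so $(Z^{(m)})$ is Cauchy in $C([0,T],\T^{\Z^d}_\lambda)$ and its limit solves \eqref{eq:fundamentalmult 3}. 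Uniqueness follows from the same computation applied to $D(t) := \sum_j \lambda^j \norm{Z^j - Z'^j}_t^2$ for two putative solutions: one obtains $D(t) \le K' \int_0^t D(s)\, ds$ with $D(0) = 0$, forcing $D \equiv 0$. The main obstacle throughout is the interplay between the weights $(\lambda^j)$ and the interaction bounds $(\kappa^k)$ encoded in \eqref{eq: lambdakappa2}: it is precisely this switch that allows the Picard scheme to close in the topology of $\T^{\Z^d}_\lambda$, and it is the reason the weights were constructed via the Fourier prescription \eqref{eq lambda j defn 0}--\eqref{eq lambda j defn} rather than chosen ad hoc.
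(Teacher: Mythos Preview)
There is a genuine gap in the existence argument. Your Picard scheme hinges on the contraction estimate $\phi_m(t)\le K'\int_0^t\phi_{m-1}(s)\,ds$, which in turn requires a two-sided Lipschitz bound on $\mathfrak{b}_t$. But Assumption~\ref{Assumption Absolute Bound} deliberately gives only a \emph{one-sided} condition: if $X^j_t\ge Z^j_t$ then $\mathfrak{b}_t(X^j)-\mathfrak{b}_t(Z^j)\le \tilde C\,\norm{X^j-Z^j}_t$, with no lower bound. This is essential for the motivating Fitzhugh--Nagumo example in Section~\ref{Section Example}, where $\mathfrak{b}_t$ contains $-\tfrac{1}{3}(U^j_t)^3$ and is not globally Lipschitz. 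In your iteration, $Z^{(m+1),j}_t-Z^{(m),j}_t$ involves $\mathfrak{b}_s(Z^{(m),j})-\mathfrak{b}_s(Z^{(m-1),j})$; the sign of the former has no relation to the sign of $Z^{(m),j}_s-Z^{(m-1),j}_s$, so the sign-splitting trick you invoke for the a~priori bound cannot be reused here, and the cubic term can make $|\mathfrak{b}_s(Z^{(m),j})-\mathfrak{b}_s(Z^{(m-1),j})|$ arbitrarily large. The same obstruction blocks your uniform bound $\sup_m\norm{Z^{(m)}}_{T,\lambda}<\infty$: inequalities \eqref{eq bound 1 start}--\eqref{eq bound 2 start} split on the sign of the \emph{argument} of $\mathfrak{b}_t$, which for a Picard iterate is $Z^{(m),j}$, not the quantity $Z^{(m+1),j}$ whose growth you are trying to control.

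The paper circumvents this by never iterating through $\mathfrak{b}_t$. It first reduces to $V_m$-periodic $w$ (Lemma~\ref{Lemma : Psi periodic solution}), where existence becomes a finite-dimensional problem handled by a Cauchy--Peano argument (Lemma~\ref{Lem General Cauchy Peano}) requiring only continuity of $\mathfrak{b}_t$, with the one-sided bounds used solely for an a~priori estimate keeping the solution inside a fixed compact set. General $w\in\bar{\T}^{\Z^d}_\lambda$ is then approximated by periodic $\tilde{w}(q)$, and the limit is taken using the Lipschitz estimate of Lemma~\ref{Lemma Psi Lipschitz}. That lemma, and your uniqueness argument, do work: when $X$ and $Z$ are both \emph{solutions}, the sign of $X^j_t-Z^j_t$ is exactly the quantity appearing in the one-sided hypothesis on $\mathfrak{b}_t$, so \eqref{eq bound 3 start}--\eqref{eq bound 4 start} apply directly. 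Thus your uniqueness sketch is fine, but existence needs the paper's Cauchy--Peano route (or an additional local-Lipschitz hypothesis on $\mathfrak{b}_t$ that the paper does not assume).
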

\begin{proof}
Fix $w \in \bar{\T}^{\Z^d}_{\lambda}$. We prove the existence of $\Psi(w)$ satisfying \eqref{eq:fundamentalmult 3} by using periodic approximations of $w$ and Lemma \ref{Lemma : Psi periodic solution}. The uniqueness is a direct consequence of Lemma \ref{Lemma Psi Lipschitz}. The proof for $\Psi^n$ is analogous.

Fix $q\in\Z^+$ and let $p\in\Z^+$ be such that $\sum_{j\notin V_p}\lambda^j \norm{w^j}_T^2 \leq \frac{1}{4q}$. Let $K=\sup_{j\in V_p}\norm{w^j}^2_T$ and $m\in\Z^+$ be such that $K\sum_{j\notin V_m}\lambda^j \leq \frac{1}{4q}$. Define $\tilde{w}(q) \in \T^{\Z^d}_\lambda$ to be such that
\begin{align*}
\tilde{w}(q)^j &= w^j \text{ for $j\in V_p$} \\
\tilde{w}(q)^j &= 0 \text{ for $j\in V_m / V_p$} \\
\tilde{w}(q)^j &= \tilde{w}(q)^{j\modd V_m} \text{ otherwise.}
\end{align*}
We observe that
\begin{align}
\sum_{j\in \Z^d}\lambda^j\norm{w^j - \tilde{w}(q)^j}_T^2 &\leq \sum_{j\notin V_p}\lambda^j\big(\norm{w^j}_T + \norm{\tilde{w}(q)^j}_T\big)^2\nonumber \\
&\leq 2\sum_{j\notin V_p}\lambda^j\big(\norm{w^j}^2_T + \norm{\tilde{w}(q)^j}_T^2\big) \nonumber\\
&\leq 2\bigg(\frac{1}{4q}+ \sum_{j\notin V_m}\lambda^j K\bigg)\nonumber \\
&\leq q^{-1}.\label{eq: epsilon bound 1}
\end{align}
By Lemma \ref{Lemma : Psi periodic solution}, there exists a solution $Z^{(q)} := \Psi(\tilde{w}(q))$ to \eqref{eq:fundamentalmult 3}. If $r > q$, then by Lemma \ref{Lemma Psi Lipschitz}, for a positive constant $\Psi_C$,
\begin{align}\label{Eq : Z Cauchy Temp}
\norm{Z^{(q)} - Z^{(r)}}_{T,\lambda} \leq \Psi_C\norm{\tilde{w}(q) -\tilde{w}(r)}_{T,\lambda}.
\end{align}
Since, by \eqref{eq: epsilon bound 1},
\begin{align*}
\norm{\tilde{w}(q) -\tilde{w}(r)}_{T,\lambda} &\leq \norm{\tilde{w}(q) -w}_{T,\lambda} + \norm{w-\tilde{w}(r)}_{T,\lambda}\\
&\leq \frac{2}{\sqrt{q}},
\end{align*}
we may infer from \eqref{Eq : Z Cauchy Temp} that the sequence $(Z^{(q)})_{q\in\Z^+}$ converges to a limit $Z^* \in \T^{\Z^d}_\lambda$ as $q \to \infty$. To finish, it suffices for us to prove that for all $t\in [0,T]$ and $j\in\Z^d$,
\begin{multline}
\lim_{q\to \infty}\int_0^t \mathfrak{b}_s(Z^{(q)j}) + \sum_{k\in\Z^d}\Lambda^k_s(Z^{(q)j},Z^{(q)j+k})ds\\
=\int_0^t \mathfrak{b}_s(Z^{*j}) + \sum_{k\in\Z^d}\Lambda^k_s(Z^{*j},Z^{*j+k})ds.\label{eq: to show 1}
\end{multline}
Now since $Z^{(q)}$ converges as $q \to \infty$, for each $j$, there must be some positive constant $K$ such that
\begin{equation*}
\sup_{q \in\Z^+}\norm{Z^{(q)j}}_T  \leq K.
\end{equation*}
But by Assumption \ref{Assumption Absolute Bound}, this means that $\sup_{q\in\Z^+,t\in [0,T]}\big|\mathfrak{b}_t(Z^{(q)j})\big|  < \infty$. Since $\mathfrak{b}_t$ is continuous, $\mathfrak{b}_t(Z^{(q)j}) \to \mathfrak{b}_t(Z^{*j})$, so that we may conclude by the dominated convergence theorem that
\[
\int_0^t \mathfrak{b}_s(Z^{(q)j}) ds \to \int_0^t \mathfrak{b}_s(Z^{*j}) ds. 
\]
For the other terms, we see that
\begin{multline*}
\bigg|\int_0^t\sum_{k\in\Z^d}\big(\Lambda^k_s(Z^{(q)j},Z^{(q)j+k})-\Lambda^k_s(Z^{*j},Z^{*j+k})\big)ds\bigg|\\
\leq \int_0^t \sum_{k\in\Z^d}\bigg|\Lambda^k_s(Z^{(q)j},Z^{(q)j+k})-\Lambda^k_s(Z^{*j},Z^{(q)j+k})\bigg|\\
+\sum_{k\in\Z^d}\bigg|\Lambda^k_s(Z^{*j},Z^{(q)j+k})-\Lambda^k_s(Z^{*j},Z^{*j+k})\bigg|ds\\
\leq \int_0^t \sum_{k\in\Z^d}\kappa^k\bigg(\norm{Z^{(q)j}-Z^{*j}}_s + \norm{Z^{(q)j+k}-Z^{*j+k}}_s\bigg)ds.
\end{multline*}
Summing over $j$,
\begin{align*}
\sum_{j\in\Z^d}&\lambda^j\bigg|\int_0^t\sum_{k\in\Z^d}\big(\Lambda^k_s(Z^{(q)j},Z^{(q)j+k})-\Lambda^k_s(Z^{*j},Z^{*j+k})\big)ds\bigg|\\
&\leq \int_0^t\bigg( \kappa_*\sum_{j\in\Z^d}\lambda^j\norm{Z^{(q)j}-Z^{*j}}_s  + \sum_{j,k\in\Z^d}\lambda^j\kappa^k \norm{Z^{(q)j+k}-Z^{*j+k}}_s \bigg)ds\\
&\leq \int_0^t \bigg( 3\kappa_*\sum_{j\in\Z^d}\lambda^j\norm{Z^{(q)j}-Z^{*j}}_s \bigg)ds\\
&\leq \int_0^t 3\kappa_*\norm{Z^{(q)j}-Z^{*j}}_{s,\lambda} ds,
\end{align*}
by Lemma \ref{Lemma Switch Inequality} and the Cauchy-Schwarz Inequality. Since $\norm{Z^{(q)j}-Z^{*j}}_{s,\lambda} \to 0$ as $q \to \infty$, it must be that as $q\to\infty$, for each $j\in\Z^d$,
\[
\int_0^t\sum_{k\in\Z^d}\big(\Lambda^k_s(Z^{(q)j},Z^{(q)j+k})-\Lambda^k_s(Z^{*j},Z^{*j+k})\big)ds \to 0.
\]
We have thus established \eqref{eq: to show 1}.
\end{proof}

\begin{lemma}\label{Lemma : Psi periodic solution}
Suppose that $w \in \bar{\T}^{\Z^d}_\lambda$ is $V_m$ periodic, i.e. $w^{k} = w^{k\modd V_m}$ for all $k\in\Z^d$. Then there exist $\Psi^n(w)$ and $\Psi(w)$ satisfying \eqref{eq:fundamentalmult 2}-\eqref{eq:fundamentalmult 3}.
\end{lemma}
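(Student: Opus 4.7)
The strategy is to exploit the $V_m$-periodicity of $w$ to reduce each of \eqref{eq:fundamentalmult 2}-\eqref{eq:fundamentalmult 3} to a finite-dimensional integral equation on $\T^{V_m}$, and then produce a solution by a Schauder-type fixed-point theorem with the help of an a priori bound coming from the one-sided growth conditions. Concretely, I look for a $V_m$-periodic $Z$ with $Z^k=Z^{k\modd V_m}$: substituting this ansatz, each equation collapses to a coupled system of integral equations indexed by $j\in V_m$. The infinite sum in \eqref{eq:fundamentalmult 3} converges absolutely by the bound $|\Lambda^k_s(Z^j,Z^{j+k})|\leq \kappa^k(1+\norm{Z^j}_s+\norm{Z^{j+k}}_s)$ of Assumption \ref{Assumption NonUniform} together with $\sum_k\kappa^k<\infty$; the sum in \eqref{eq:fundamentalmult 2} is already finite. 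It therefore suffices to produce a fixed point in $\T^{V_m}$ of the nonlinear operator $F$ defined by the right-hand side.

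Next, I derive an a priori bound for any candidate fixed point $Z$. Applying \eqref{eq bound 1 start} when $Z^j_t\geq 0$ and \eqref{eq bound 2 start} when $Z^j_t\leq 0$, then using the $V_m$-periodicity of $Z$ to bound $\sum_{k\in\Z^d}\kappa^k\norm{Z^{(j+k)\modd V_m}}_s$ by $\kappa_*\max_{l\in V_m}\norm{Z^l}_s$, Gronwall's inequality yields $\max_{j\in V_m}\norm{Z^j}_T\leq K$ for a constant $K$ depending only on $|U_{ini}|$, $\max_{j\in V_m}\norm{w^j}_T$, $T$, $C$, and $\kappa_*$. To produce the fixed point itself, I precompose $F$ with the sup-ball projection onto $\{X\in\T^{V_m}:\max_j\norm{X^j}_T\leq K+1\}$, obtaining a continuous map $\bar F$ whose image consists of uniformly bounded, uniformly equicontinuous paths (using continuity and boundedness of $\mathfrak{b}_s,\Lambda^k_s$ on bounded sets, together with the continuity of $w$), hence precompact in $\T^{V_m}$ by Arzel\`a-Ascoli. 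Schauder's fixed-point theorem applied to $\bar F$ on a sufficiently large closed convex ball gives a fixed point $Z^*$; the a priori bound forces $\max_j\norm{Z^{*,j}}_T\leq K$, so the truncation is inactive at $Z^*$, which is therefore also a fixed point of $F$ and a solution of the original equation. The case of $\Psi^n(w)$ is handled by the same argument with the simpler finite interaction sum.

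The main obstacle is the non-Lipschitz nature of $\mathfrak{b}$, which rules out a direct Banach contraction argument. Schauder's theorem is the natural substitute because it requires only continuity of $\mathfrak{b}_s$ and $\Lambda^k_s$ on bounded subsets of $\T$ (supplied by Assumptions \ref{Assumption Absolute Bound} and \ref{Assumption NonUniform}), while the one-sided growth conditions \eqref{eq bound 1 start}-\eqref{eq bound 2 start} supply the a priori bound needed to render the truncation inactive at the fixed point.
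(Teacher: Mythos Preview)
Your approach is correct and takes a genuinely different route from the paper. Both arguments begin the same way: exploit $V_m$-periodicity to reduce to a finite-dimensional system on $\T^{V_m}$, and use the one-sided growth conditions to obtain an a priori bound. The paper then subtracts $w^j$ to obtain an ODE, proves a functional version of Cauchy--Peano (via Euler polygons and Arzel\`a--Ascoli, its Lemma~\ref{Lem General Cauchy Peano}), obtains a \emph{local} solution on $[0,\alpha]$ with $\alpha=\min(T,\zeta_T/M)$, and iterates---the a priori bound (Lemma~\ref{Lemma Bound Psin}) keeps every continuation inside a fixed bounded set $\Omega_T$, so the step size $\alpha$ never shrinks and the iteration reaches $T$ in finitely many steps. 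Your Schauder argument produces a global solution on $[0,T]$ in one stroke and avoids the iteration, at the cost of invoking a named fixed-point theorem rather than the elementary Euler-polygon construction; the paper's route is more self-contained but longer.

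One point in your argument needs tightening. As written, the a priori bound is derived for fixed points of $F$, but you then apply it to a fixed point $Z^*$ of $\bar F=F\circ P$, which a priori only satisfies $Z^*=F(P(Z^*))$. The bound does transfer, but only if $P$ is chosen to be \emph{sign-preserving} and pointwise nonexpansive---for instance the radial retraction $P(X)=\min\{1,(K{+}1)/\max_j\norm{X^j}_T\}\,X$ or componentwise clipping $P(X)^j_t=\max(-(K{+}1),\min(K{+}1,X^j_t))$. With such a $P$, on any interval where $Z^{*,j}$ has constant sign the same is true of $P(Z^*)^j$, and $\norm{P(Z^*)^j}_s\le\norm{Z^{*,j}}_s$; hence Assumption~\ref{Assumption Absolute Bound} applied to $\mathfrak{b}_s(P(Z^*)^j)$ still yields the upper bound $\tilde C\norm{Z^{*,j}}_s$, and the same Gronwall step closes to give $\max_j\norm{Z^{*,j}}_T\le K$, forcing the truncation to be inactive. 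With this specification your argument is complete.
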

\begin{proof}
We prove the result for $\Psi(w)$ (the case $\Psi^n(w)$ is analogous). It suffices for us to show existence to the following finite-dimensional ODE: for $j\in V_m$,
\begin{equation}\label{eq u soln 1}
U^{j}_t := U_{ini} + \int_0^t \bigg(\mathfrak{b}_s(U^{j}) + \sum_{k\in \Z^d}\Lambda^{k}_s(U^j,U^{(j+k)\modd V_m})\bigg)ds + w^j_{t}.
\end{equation}
Once existence to the above equation has been shown, we may define $\Psi(w)$ to be the $V_m$-periodic extension of this solution, i.e. $\Psi(w)^j := U^{j\modd V_m}$. 

Define $Y^j_t = U^j_t - w^j_t$. It can be seen that the existence of a solution to \eqref{eq u soln 1} is equivalent to the existence of a solution to the finite-dimensional differential equation, for all $j\in V_m$,
\begin{equation}\label{eq Y soln 1}
\frac{d}{dt}Y^{j}_t = \mathfrak{b}_t(Y^{j} + w^j) + \sum_{k\in \Z^d}\Lambda^{k}_t\big(Y^j+w^j,Y^{(j+k)\modd V_m}+w^{(j+k)\modd V_m}\big),
\end{equation}
such that $Y^j_0 = U_{ini}$. Suppose for the moment that there were to exist a solution to \eqref{eq Y soln 1} over some time interval $[0,\alpha]$. Then $\norm{Y^j}_t \leq \norm{U^j}_t + \norm{w^j}_t$ and therefore by Lemma \ref{Lemma Bound Psin}
\begin{align}\label{eq: Omega defn1}
\sum_{j\in V_m}\norm{Y^j}_\alpha \leq&  \zeta_\alpha \\
\zeta_\alpha =& \bigg(|V_m|\big(|U_{ini}|+\alpha\kappa_*\big) +3\sum_{j\in V_m}\norm{w^j}_\alpha\bigg)\exp\big((C+\kappa_*)\alpha\big).\nonumber
\end{align}
We will now use the generalisation of the Cauchy-Peano Theorem in Lemma \ref{Lem General Cauchy Peano} to prove the existence of a solution to \eqref{eq Y soln 1}. Let $\Omega_t$ be the set of all $Y\in \mathcal{C}\big([0,t],\R^{V_m}\big)$ satisfying $\norm{Y^j}_t \leq 2\zeta_t$ for all $j\in V_m$. It may be observed using the triangle inequality that
\begin{equation*}
\left\lbrace Y \in\mathcal{C}\big([0,t],\R^{V_m}\big): \norm{Y^j - U_{ini}}_t \leq \zeta_t\right\rbrace \subseteq \Omega_t.
\end{equation*}
Let $M\in [0,\infty]$, be
\begin{multline}\label{eq:M bound}
M := \sup_{t\in [0,T],Y \in \Omega_T,j\in V_m}\bigg|\mathfrak{b}_t(Y^{j} + w^j) \\+ \sum_{k\in \Z^d}\Lambda^{k}_t\big(Y^j+w^j,Y^{(j+k)\modd V_m}+w^{(j+k)\modd V_m}\big)\bigg|.
\end{multline}
We claim that $M < \infty$. This is because for each $j\in V_m$, $\sup_{Y\in\Omega_T}\norm{Y^j + w^j}_T < \infty$, which means that $\sup_{t\in [0,T],Y \in \Omega_T,j\in V_m}\big|\mathfrak{b}_t(Y^{j} + w^j)\big| < \infty$ (thanks to Assumption \ref{Assumption Absolute Bound}). For the other term, it may be observed that the sum over over $j$ is finite, i.e.
\begin{align*}
&\sum_{j\in V_m}\bigg| \sum_{k\in \Z^d}\Lambda^{k}_t(Y^j+w^j,Y^{(j+k)\modd V_m}+w^{(j+k)\modd V_m})\bigg| \\ &\leq \sum_{j\in V_m,k\in\Z^d}\kappa^k\big( \norm{Y^j + w^j}_T + \norm{Y^{(j+k)\modd V_m}+w^{(j+k)\modd V_m}}_T + 1\big) \\
&=\kappa_* | V_m| + 2\kappa_*\sum_{j\in V_m}\norm{Y^{j}+w^j}_T < \infty.
\end{align*}
We thus see that $M < \infty$. We may therefore use Lemma \ref{Lem General Cauchy Peano} to conclude that there exists a solution $Y^*$ to \eqref{eq Y soln 1} over the time interval $[0,\alpha]$, where $\alpha = \min\big(T,\zeta_T / M\big)$. Furthermore, from \eqref{eq: Omega defn1}, this solution must satisfy
\begin{equation}
\sum_{j\in V_m}\norm{Y^{*j}}_{\alpha} \leq \zeta_\alpha,
\end{equation}
so that $Y^* \in \Omega_t$. If $\alpha \neq T$ we may continue this process iteratively. It follows from the triangle inequality that
\begin{multline*}
\bar{\Omega}_{2\alpha} := \left\lbrace Z \in \mathcal{C}([0,2\alpha],\R^{V_m}): Z^j_t = Y^{*j}_t\text{ for all }t\in [0,\alpha]\right. \\ \left.\text{ and }\sup_{t\in [\alpha,T] , j\in V_m}\big|Z^j_t - Y^{*j}_\alpha\big| \leq \zeta_T\right\rbrace \subseteq \Omega_T.
\end{multline*}
Note also that $\mathfrak{b}_t$ and $\Lambda_t$, when restricted to $\bar{\Omega}_{2\alpha}$, may be interpreted as continuous functions on $\mathcal{C}([\alpha,T],\R^{V_m})$. Hence by Lemma \ref{Lem General Cauchy Peano} (we replace the interval $[0,T]$ in this lemma by $[\alpha,T]$), there exists a solution $Z^* \in \bar{\Omega}_{2\alpha}$ which satisfies \eqref{eq Y soln 1} for all $t\in [\alpha,2\alpha]$.

We may continue this process iteratively to find a solution over the entire time interval $[0,T]$. The reason that this works is that the solution must always be in $\Omega$, and the bound in \eqref{eq:M bound} is over all of $\Omega$, which is why the increment in the time interval is always $\alpha$.
\end{proof}
We use the following generalisation of the Cauchy-Peano Existence theorem. 
\begin{lemma}\label{Lem General Cauchy Peano}
Let $\Omega$ be a closed subset of $\T^{V_m}$ such that
\[
\lbrace X \in \T^{V_m}: X^j_0 = X_{ini} \text{ and }\norm{X^j-X_{ini}}_T \leq \beta \text{ for all }j\in V_m\rbrace \subseteq \Omega.
\]
Suppose that $f \in \mathcal{C}\big([0,T]\times \Omega,\R^{V_m}\big)$ is such that for all $t\in [0,T]$, $f(t,\cdot)$ is $\mathcal{B}\big(\R^{V_m} \big) / \mathcal{B}(\mathcal{C}([0,t],\R^{V_m}))$ measurable and
\begin{equation}
\sup_{t\in [0,T],X \in \Omega,j\in V_m}|f(t,X)^j| = M.
\end{equation}
Then there exists $X\in \Omega$ such that for all $t\in [0,\alpha]$ (where $\alpha = min(T,\beta / M)$) and $j\in V_m$,
\begin{equation*}
X^j_t = X_{ini} + \int_0^t f(s,X)^j ds.
\end{equation*}
\end{lemma}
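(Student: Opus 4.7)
\textbf{Proof plan for Lemma \ref{Lem General Cauchy Peano}.}
The plan is to apply a delayed Euler-type approximation together with Arzelà--Ascoli, in the spirit of the classical Cauchy--Peano theorem, but in a form that accommodates the path-dependence of $f$. For each $k\in\Z^+$ let $\epsilon_k = \alpha/k$, and define $X^{(k)} \in \mathcal{C}([0,T],\R^{V_m})$ iteratively on successive intervals $[i\epsilon_k,(i+1)\epsilon_k]$ (for $0\le i \le k-1$) by the delayed integral equation
\begin{equation*}
X^{(k),j}_t \;=\; X_{ini} + \int_0^{(t-\epsilon_k)\vee 0} f\big(s, X^{(k)}\big)^j\, ds, \qquad t\in [0,\alpha],
\end{equation*}
and extend $X^{(k)}$ to $[0,T]$ by taking $X^{(k),j}_t = X^{(k),j}_\alpha$ for $t\in [\alpha,T]$. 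The delay $\epsilon_k$ ensures that the integrand on $[i\epsilon_k,(i+1)\epsilon_k]$ only depends on $X^{(k)}$ restricted to $[0,i\epsilon_k]$ (using the $\mathcal{B}(\R^{V_m})/\mathcal{B}(\mathcal{C}([0,s],\R^{V_m}))$ measurability of $f(s,\cdot)$), so the construction is well-defined, with each piece continuous.

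Next I would verify three properties. First, the uniform bound $\norm{X^{(k),j}-X_{ini}}_T \le M\alpha \le \beta$ follows immediately from the $M$-bound on $f$ together with $\alpha \le \beta/M$, which by hypothesis on $\Omega$ guarantees $X^{(k)} \in \Omega$. Second, the family is equi-Lipschitz: $|X^{(k),j}_t - X^{(k),j}_{t'}| \le M|t-t'|$ for $t,t' \in [0,\alpha]$ (and constant on $[\alpha,T]$), hence equicontinuous and uniformly bounded. By the Arzelà--Ascoli theorem, there is a subsequence (still denoted $X^{(k)}$) converging uniformly on $[0,T]$ to some limit $X^* \in \T^{V_m}$; since $\Omega$ is closed, $X^* \in \Omega$.

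It remains to show that $X^*$ satisfies the required integral identity on $[0,\alpha]$. Fix $j\in V_m$ and $t\in [0,\alpha]$. Because the set $\mathcal{K}:=\{X^{(k)}:k\in\Z^+\}\cup\{X^*\}$ is a uniformly bounded, equicontinuous, hence relatively compact subset of $\Omega$, the continuity of $f$ on $[0,T]\times\Omega$ yields $f(s,X^{(k)})^j \to f(s,X^*)^j$ for each $s\in [0,T]$. Since $|f(s,X^{(k)})^j|\le M$, the bounded convergence theorem gives
\begin{equation*}
\int_0^{(t-\epsilon_k)\vee 0} f(s,X^{(k)})^j\, ds \;\longrightarrow\; \int_0^t f(s,X^*)^j\, ds
\end{equation*}
as $k\to\infty$ (the shift $\epsilon_k\to 0$ contributes a term bounded by $M\epsilon_k\to 0$). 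Combining with $X^{(k),j}_t\to X^{*,j}_t$ on the left-hand side of the defining equation yields the claimed fixed-point identity.

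The main conceptual obstacle is justifying the pointwise convergence $f(s,X^{(k)})^j\to f(s,X^*)^j$ under the path-space continuity hypothesis on $f$: this is where the uniform convergence guaranteed by Arzelà--Ascoli is essential, since mere pointwise convergence of the $X^{(k)}$ would not suffice for a functional that depends on the whole past trajectory. Once this step is in place, the rest of the argument is routine.
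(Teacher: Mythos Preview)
Your proof is correct and takes essentially the same approach as the paper: construct explicit approximations that stay in $\Omega$, apply Arzel\`a--Ascoli to extract a uniformly convergent subsequence, and pass to the limit using the continuity of $f$ together with dominated convergence. The only difference is cosmetic---you use Tonelli's delayed integral scheme $X^{(k),j}_t = X_{ini} + \int_0^{(t-\epsilon_k)\vee 0} f(s,X^{(k)})^j\,ds$, whereas the paper uses a piecewise-linear Euler polygon evaluating $f$ at the left endpoint of each subinterval and then writes $X^{(k)j}_t = X_{ini} + \int_0^t f(s^{(k)},X^{(k)})^j\,ds$ with $s^{(k)}$ the mesh floor; the limiting argument is identical in structure.
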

\begin{proof}
Divide $[0,\alpha]$ into $k+1$ points $t_0:=0, t_1 := \alpha / k,t_2:=2\alpha/k,\ldots,t_k:=\alpha$. Let $\Omega' = \lbrace X \in \mathcal{C}([0,\alpha],\R^{V_m}): \norm{X^j-X_{ini}}_\alpha \leq \beta \text{ for all }j\in V_m\rbrace$. Let $X^{(k)} \in \mathcal{C}([0,\alpha],\R^{V_m})$ be such that 
\begin{align*}
X^{(k)j}_0 &= X_{ini} \\
X^{(k)j}_t &= X^{(k)j}_s + (t-s)f(s,X^{(k)})^j,
\end{align*}
where $s= \sup\lbrace t_q: t_q\leq t\rbrace$. We note that $X^{(k)} \in \Omega'$, because
\begin{align*}
\big|X^{(k)j}_{t_p} - X_{ini}\big| &\leq \sum_{l=1}^p \big| X^{(k)j}_{t_l} - X^{(k)j}_{t_{l-1}}\big| \\
 & \leq \sum_{l=1}^p \frac{\alpha}{k}\norm{f(t_{l-1},X^{(k)})^j} \\
 &\leq \alpha M \leq \beta.
\end{align*}
We see that in general $\norm{X^{(k)j}}_\alpha \leq \beta$, and for $t\in [t_l,t_{l+1}]$, $\big|X^{(k)j}_t - X^{(k)j}_{t_l}\big| \leq \frac{M\alpha}{k}$. This means that the sequence $\lbrace X^{(k)}\rbrace_{k=1}^\infty$ is equicontinuous, and therefore compact by the Arzela-Ascoli Theorem. Thus there exists a subsequence $(k_p)_{p=1}^\infty$ and $X^* \in \mathcal{C}([0,\alpha],\R^m)$ such that for each $j\in V_m$, $X^{(k_p)j}_t \to X^{*j}_t$ uniformly in $t$. We have that
\begin{equation*}
X^{(k_p)j}_t = X_{ini} + \int_0^t f(s^{(k_p)},X^{(k_p)})^j ds,
\end{equation*}
where $s^{(k_p)} = \sup\lbrace t_q: t_q\leq s\rbrace$, the supremum being taken over the partition with $k_p + 1$ points. Now as $p\to\infty$, $s^{(k_p)} \to s$, and by the continuity of $f$, $f(s^{(k_p)},X^{(k_p)})\to f(s,X^*)$. Since $f$ is bounded on $[0,T]\times \Omega'$, by the dominated convergence theorem, $\int_0^t f(s^{(k_p)},X^{(k_p)})^j ds \to \int_0^t f(s,X^*)^j ds$. We thus see that as $p\to \infty$,
\begin{equation*}
X^{*j}_t = X_{ini} + \int_0^t f(s,X^{*})^j ds.
\end{equation*}
\end{proof}
\begin{lemma}\label{Lemma Psi Lipschitz}
There exists a constant $\Psi_C$ satisfying the following. If $\Psi^n(w)$ and $\Psi^n(v)$ are solutions to \eqref{eq:fundamentalmult 2}, then
\[
\norm{\Psi^n(w)-\Psi^n(v)}_{T,\lambda} \leq \Psi_C\norm{w-v}_{T,\lambda}.
\]
If $\Psi(w)$ and $\Psi(v)$ are solutions to \eqref{eq:fundamentalmult 3} then
\[
\norm{\Psi(w)-\Psi(v)}_{T,\lambda} \leq \Psi_C\norm{w-v}_{T,\lambda}.
\]
\end{lemma}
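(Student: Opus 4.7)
The plan is to subtract the defining integral equations \eqref{eq:fundamentalmult 3} for $Z := \Psi(w)$ and $Y := \Psi(v)$, derive a pointwise inequality for $D^j := Z^j - Y^j$ (which satisfies $D^j_0 = 0$), and then close a Gronwall argument on $A(t) := \sum_{j \in \Z^d} \lambda^j \norm{D^j}_t^2 = \norm{Z-Y}_{t,\lambda}^2$. I split the drift difference as $\mathfrak{b}_s(Z^j) - \mathfrak{b}_s(Y^j) + \sum_k[\Lambda^k_s(Z^j,Z^{j+k}) - \Lambda^k_s(Y^j,Y^{j+k})] = \phi^j_s + \psi^j_s$, where $\phi^j_s$ collects the $\mathfrak{b}$-term together with $\sum_k[\Lambda^k_s(Z^j,Z^{j+k}) - \Lambda^k_s(Y^j,Z^{j+k})]$ (i.e.\ the piece sharing a common second argument), and $\psi^j_s := \sum_k[\Lambda^k_s(Y^j,Z^{j+k}) - \Lambda^k_s(Y^j,Y^{j+k})]$. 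The two-sided Lipschitz bound on $\Lambda^k$ in its second argument (Assumption \ref{Assumption NonUniform}) gives $|\psi^j_s| \leq \sum_k \kappa^k \norm{D^{j+k}}_s$, but the inequalities \eqref{eq bound 3 start}-\eqref{eq bound 4 start} provide only one-sided control on $\phi^j_s$: $\phi^j_s \leq C\norm{D^j}_s$ when $D^j_s \geq 0$, and $\phi^j_s \geq -C\norm{D^j}_s$ when $D^j_s \leq 0$. This one-sidedness of $\phi^j_s$ is the main obstacle.

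I would resolve it via a last-sign-change argument exploiting the continuity of $D^j$ and the fact that $D^j_0 = 0$. For any $r \in (0,T]$ with $D^j_r \neq 0$, set $\tau := \sup\{s \in [0,r] : D^j_s \cdot D^j_r \leq 0\}$, so that $D^j_\tau = 0$ and $D^j_s$ has the same sign as $D^j_r$ throughout $(\tau, r]$. Writing the integral equation over $[\tau, r]$, multiplying through by $\mathrm{sgn}(D^j_r)$, and applying the correct-direction one-sided bound on $\phi^j_s$ on $(\tau,r]$ together with $|w^j_r - w^j_\tau - v^j_r + v^j_\tau| \leq 2\norm{w^j - v^j}_r$, I obtain (after extending the integral back to $[0,r]$ using non-negativity of the integrand)
\begin{equation*}
|D^j_r| \leq \int_0^r \bigg[ C\norm{D^j}_s + \sum_k \kappa^k \norm{D^{j+k}}_s \bigg] ds + 2\norm{w^j - v^j}_r.
\end{equation*}
Monotonicity of the right-hand side in $r$ permits replacing $|D^j_r|$ by $\norm{D^j}_t$.

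To close, I would square this using $(a+b)^2 \leq 2a^2 + 2b^2$, Cauchy-Schwarz on the time integral, and $(\sum_k \kappa^k a_k)^2 \leq \kappa_* \sum_k \kappa^k a_k^2$; then multiply by $\lambda^j$ and sum over $j \in \Z^d$. Lemma \ref{Lemma Switch Inequality} controls the cross term via $\sum_j \lambda^j \sum_k \kappa^k \norm{D^{j+k}}_s^2 \leq 2\kappa_* A(s)$, producing a closed scalar integral inequality $A(t) \leq K\int_0^t A(s)\,ds + 8\norm{w-v}_{T,\lambda}^2$ with $K$ depending only on $T$, $C$ and $\kappa_*$. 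Gronwall's lemma then yields the Lipschitz constant $\Psi_C$. The argument for $\Psi^n$ is identical upon replacing $\sum_{k \in \Z^d}$ by $\sum_{k \in V_n}$ throughout (the truncated versions of \eqref{eq bound 3 start}-\eqref{eq bound 4 start} remain valid term-by-term), and since $\sum_{k \in V_n} \kappa^k \leq \kappa_*$ the resulting constant $\Psi_C$ is uniform in $n$.
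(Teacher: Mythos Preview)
Your proposal is correct and follows essentially the same approach as the paper: both use the last-sign-change/constant-sign interval argument to exploit the one-sided bounds \eqref{eq bound 3 start}--\eqref{eq bound 4 start}, then square, sum against $\lambda^j$, invoke Lemma~\ref{Lemma Switch Inequality}, and close with Gronwall. The only (cosmetic) difference is that the paper first applies a scalar Gronwall to absorb the $C\norm{X^j-Z^j}_s$ term before squaring and applying a second Gronwall on the $\lambda$-weighted sum, whereas you square immediately and apply a single Gronwall; the resulting constants differ slightly but the argument is the same.
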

\begin{proof}
We prove the result for $\Psi$ (the other case follows analogously). Let $v,w \in \T^{\Z^d}_\lambda$, and write $X = \Psi(w)$ and $Z = \Psi(v)$. 

Let $[\tau,\gamma] \subset [0,T]$ be such that $X^j_\tau = Z^j_\tau$, and $X^j_t - Z^j_t$ is of the same sign for all $t\in [\tau,\gamma]$. We see that for all $t\in [\tau,\gamma]$,
\begin{align*}
&X^j_t - Z^j_t\\ &= \int_{\tau}^t\bigg( \mathfrak{b}_s(X^j) - \mathfrak{b}_s(Z^j) + \sum_{k\in\Z^d}\Lambda^k_s(X^j,X^{j+k}) - \Lambda^k_s(Z^j,Z^{j+k})\bigg)ds \\ &+ w^j_t-w^j_{\tau} - v^j_t+ v^j_{\tau} + X^j_{\tau} - Z^j_{\tau}.
\end{align*}
Hence for $r\in [\tau,\gamma]$, since $X^j_\tau = Z^j_\tau$,
\begin{align*}
&\sup_{t\in [\tau,r]}\left| X^j_t - Z^j_t\right| \\&\leq \sup_{t\in [\tau,r]}\bigg|\int_{\tau}^t\bigg( \mathfrak{b}_s(X^j) - \mathfrak{b}_s(Z^j) + \sum_{k\in\Z^d}\Lambda^k_s(X^j,X^{j+k}) - \Lambda^k_s(Z^j,X^{j+k}) \\
&+ \sum_{k\in\Z^d}\Lambda^k_s(Z^j,X^{j+k}) - \Lambda^k_s(Z^j,Z^{j+k}\bigg)ds \bigg|+ 2\norm{w^j-v^j}_T \\
&\leq \int_{\tau}^r\bigg(C\norm{X^j-Z^j}_s + \sum_{k\in\Z^d}\kappa^k \norm{X^{j+k}-Z^{j+k}}_s\bigg)ds + 2\norm{w^j-v^j}_T,
\end{align*}
since by assumption $X^j_t - Z^j_t$ is of the same sign, which allows us to use \eqref{eq bound 3 start} and \eqref{eq bound 4 start}. We thus see that for any $t\in [0,T]$,
\begin{equation*}
\norm{X^j-Z^j}_t \leq 2\norm{w^j-v^j}_T + \int_0^t\bigg(C \norm{X^j - Z^j}_s + \sum_{k\in\Z^d}\kappa^k \norm{X^{j+k}-Z^{j+k}}_s\bigg)ds.
\end{equation*}
Through an application of Gronwall's Inequality to the above, for all $t\in [0,T]$,
\begin{equation*}
\norm{X^j-Z^j}_t \leq \bigg( 2\norm{w^j - v^j}_T + \int_0^t\sum_{k\in\Z^d}\kappa^k \norm{X^{j+k}-Z^{j+k}}_s ds \bigg)\exp\big( CT \big).
\end{equation*}
Since $(A+B)^2 \leq 2(A^2 + B^2)$, we thus see that through two applications of Jensen's Inequality,
\begin{align*}
&\sum_{j\in\Z^d}\lambda^j\norm{X^j - Z^j}_t^2 \\ &\leq 2\exp\big( 2CT\big) \bigg[ 4\sum_{j\in\Z^d}\lambda^j \norm{w^j - v^j}_t^2 + \sum_{j\in\Z^d}\lambda^j\bigg(\int_0^t\sum_{k\in\Z^d}\kappa^k \norm{X^{j+k}-Z^{j+k}}_s ds\bigg)^2\bigg] \\
&\leq 2\exp\big( 2CT\big) \bigg[ 4\sum_{j\in\Z^d}\lambda^j \norm{w^j - v^j}_t^2 + t\int_0^t \sum_{j\in\Z^d}\lambda^j\big(\sum_{k\in\Z^d}\kappa^k \norm{X^{j+k}-Z^{j+k}}_s\big)^2 ds\bigg] \\
&\leq 2\exp\big( 2CT\big) \bigg[ 4\sum_{j\in\Z^d}\lambda^j \norm{w^j - v^j}_t^2 + t\kappa_*\int_0^t \sum_{j,k\in\Z^d}\lambda^j\kappa^k \norm{X^{j+k}-Z^{j+k}}_s^2 ds\bigg] \\
 &\leq 2\exp\big( 2CT\big) \bigg[ 4\sum_{j\in\Z^d}\lambda^j \norm{w^j - v^j}_t^2 +2t\kappa_*^2\int_0^t \sum_{j\in\Z^d}\lambda^j\norm{X^{j}-Z^{j}}_s^2 ds\bigg],
 \end{align*}
 where we have used Lemma \ref{Lemma Switch Inequality}. We apply Gronwall's Inequality to the above to find that there exists a positive constant $\Psi_c$,  with\newline$\Psi_c^2 := 8\exp\big(4T^2\kappa_*^2\exp(2CT) + 2CT\big)$, and such that for all $t\in [0,T]$,
 \begin{equation*}
 \sum_{j\in\Z^d}\lambda^j\norm{X^j - Z^j}_t^2 \leq \Psi_C^2  \sum_{j\in\Z^d}\lambda^j\norm{w^j - v^j}_t^2.
 \end{equation*}
 This gives us the lemma.
\end{proof}
\begin{lemma}\label{Lemma Bound Psin}
Suppose that $w\in \bar{\T}^{\Z^d}_\lambda$ is $V_m$-periodic, i.e. $w^j = w^{j\modd V_m}$. If $\Psi^n(w)$ is a solution to \eqref{eq:fundamentalmult 2} over some time interval $[0,\alpha] \subseteq [0,T]$, then
\begin{equation*}
\sum_{j\in V_m}\norm{\Psi^n(w)^j}_{\alpha} \leq \exp\big((C+\kappa_*)\alpha\big) \bigg(|V_m| \big(|U_{ini}| + \alpha\kappa_*\big) +2 \sum_{j\in V_m}\norm{w^j}_{\alpha}\bigg).
\end{equation*}
If $\Psi(w)$ is a solution to \eqref{eq:fundamentalmult 3}, then 
\begin{equation*}
\sum_{j\in V_m}\norm{\Psi(w)^j}_{\alpha} \leq \exp\big((C+\kappa_*)\alpha\big) \bigg(|V_m| \big(|U_{ini}| + \alpha\kappa_*\big) +2 \sum_{j\in V_m}\norm{w^j}_{\alpha}\bigg).
\end{equation*}
\end{lemma}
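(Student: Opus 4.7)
The plan is to derive a single pointwise Gronwall-type inequality for $|U^j_t|$ where $U=\Psi(w)$ (or $\Psi^n(w)$), then sum over $j\in V_m$ using the $V_m$-periodicity of the solution, and finally invoke Gronwall's Inequality. I treat $\Psi(w)$ explicitly; the argument for $\Psi^n(w)$ is identical after restricting the interaction sum to $k\in V_n$ (which only shrinks the right-hand side).

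\textbf{Sign split and pointwise bound.} Fix $j\in\Z^d$ and $t\in[0,\alpha]$. Suppose first that $U^j_t\geq 0$, and let $t_0 := \sup\{s\in[0,t] : U^j_s\leq 0\}$, with the convention $t_0=0$ if the set is empty. By continuity, either $t_0=0$ (in which case $U^j_{t_0}=U_{ini}$ and $U^j_s\geq 0$ throughout $[0,t]$, forcing $U_{ini}\geq 0$) or $U^j_{t_0}=0$; in both cases $|U^j_{t_0}|\leq |U_{ini}|$ and $U^j_s\geq 0$ on $[t_0,t]$. Writing the integral equation from $t_0$ to $t$ and applying \eqref{eq bound 1 start} pointwise on $[t_0,t]$ gives
\[
U^j_t \leq |U_{ini}| + \int_{t_0}^t\!\bigl(C\norm{U^j}_s + \kappa_* + \sum_{k\in\Z^d}\kappa^k\norm{U^{j+k}}_s\bigr)ds + |w^j_t - w^j_{t_0}|.
\]
Since $w^j_0=0$, we have $|w^j_t - w^j_{t_0}|\leq 2\norm{w^j}_t$. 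The symmetric argument using \eqref{eq bound 2 start} handles $U^j_t\leq 0$. Taking the supremum over $t$ yields
\[
\norm{U^j}_t \leq |U_{ini}| + \alpha\kappa_* + 2\norm{w^j}_t + \int_0^t\!\bigl(C\norm{U^j}_s + \sum_{k\in\Z^d}\kappa^k\norm{U^{j+k}}_s\bigr)ds.
\]

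\textbf{Periodicity and summation over $V_m$.} Because $w$ is $V_m$-periodic, uniqueness (Lemma \ref{Lemma Psi Lipschitz}, which applies to any two solutions regardless of periodicity) forces $\Psi(w)$ to coincide with its $V_m$-periodic shift; hence $U^j = U^{j\modd V_m}$ for all $j\in\Z^d$. Summing the pointwise bound over $j\in V_m$ and using the reindexing
\[
\sum_{j\in V_m}\sum_{k\in\Z^d}\kappa^k\norm{U^{j+k}}_s = \sum_{k\in\Z^d}\kappa^k\sum_{j\in V_m}\norm{U^{(j+k)\modd V_m}}_s = \kappa_*\sum_{j\in V_m}\norm{U^j}_s,
\]
I obtain
\[
\sum_{j\in V_m}\norm{U^j}_t \leq |V_m|\bigl(|U_{ini}|+\alpha\kappa_*\bigr) + 2\sum_{j\in V_m}\norm{w^j}_t + (C+\kappa_*)\int_0^t\!\sum_{j\in V_m}\norm{U^j}_s\,ds.
\]

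\textbf{Gronwall.} The scalar function $\phi(t):=\sum_{j\in V_m}\norm{U^j}_t$ satisfies $\phi(t)\leq A + (C+\kappa_*)\int_0^t\phi(s)\,ds$ with $A := |V_m|(|U_{ini}|+\alpha\kappa_*)+2\sum_{j\in V_m}\norm{w^j}_\alpha$, so Gronwall's Inequality at $t=\alpha$ produces the stated bound. The only subtle step is the sign-split construction of $t_0$; once that is in place everything else is a direct computation using Lemma \ref{Lemma Switch Inequality}-type reindexing permitted by periodicity and the standard Gronwall estimate.
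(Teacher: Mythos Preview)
Your argument is correct and mirrors the paper's proof: sign-split to invoke \eqref{eq bound 1 start}--\eqref{eq bound 2 start}, obtain the pointwise bound $\norm{U^j}_t\leq |U_{ini}|+t\kappa_*+2\norm{w^j}_t+\int_0^t(\cdots)\,ds$, sum over $V_m$ using periodicity to collapse the interaction double sum, then apply Gronwall. Two cosmetic points: the paper cites Lemma~\ref{Lemma Solution Equivalence} rather than Lemma~\ref{Lemma Psi Lipschitz} for periodicity of the solution, and your closing reference to ``Lemma~\ref{Lemma Switch Inequality}-type reindexing'' is a slip---the reindexing you actually use is pure $V_m$-periodicity, not the $\lambda$--$\kappa$ commutation.
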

\begin{proof}
We prove the first of these results (the other proof is analogous). Write $Z = \Psi^n(w)$. We may suppose that $\norm{Z^j}_T > |U_{ini}|$ for some $j$, because otherwise the lemma holds trivially. For some $j\in\Z^d$, let $[\tau,\gamma] \subseteq [0,\alpha]$ be such that $Z^j_t$ has the same sign for all $t\in [\tau,\gamma]$ and $|Z^j_\tau| = |U_{ini}|$. Then since
\begin{align*}
Z^j_t = Z^j_\tau + \int_\tau^t\bigg( \mathfrak{b}_s(Z^j) + \sum_{k\in V_n}\Lambda^k_s(Z^j,Z^{j+k})\bigg)ds + w^j_t - w^j_{\tau},
\end{align*}
it follows from \eqref{eq bound 1 start}-\eqref{eq bound 2 start} that for any $r\in [\tau,\gamma]$,
\begin{align*}
&\sup_{t \in [\tau,r]}|Z^j_t| \leq |U_{ini}| + \sup_{t\in [\tau,r]}\int_{\tau}^t \bigg(C\norm{Z^j}_s + \sum_{k\in\Z^d}\kappa^k \norm{Z^{j+k}}_s  \bigg)ds\\ &\;\;\;\;\;+ \big|w^j_t \big| + \big|w^j_\tau\big| + \kappa_*(r-\tau) \\
&\leq  |U_{ini}|  + \int_{\tau}^r \bigg(C\norm{Z^j}_s + \sum_{k\in\Z^d}\kappa^k \norm{Z^{j+k}}_s \bigg)ds+ 2\norm{w^j}_r+(r-\tau)\kappa_*.
\end{align*}
We thus see that
\begin{equation*}
\norm{Z^j}_r \leq |U_{ini}|+r\kappa_*  + \int_{0}^r \bigg(C\norm{Z^j}_s + \sum_{k\in\Z^d}\kappa^k \norm{Z^{j+k}}_s \bigg)ds+ 2\norm{w^j}_r,
\end{equation*}
and therefore
\begin{align*}
\sum_{j\in V_m}\norm{Z^j}_r &\leq |V_m|\big(|U_{ini}|+r\kappa_*\big)+2\sum_{j\in V_m}\norm{w^j}_r  \\ & \; \; \;\;\;\;\;+ \int_{0}^r \bigg(C\sum_{j\in V_m}\norm{Z^j}_s + \sum_{j\in V_m,k\in\Z^d}\kappa^k \norm{Z^{j+k}}_s \bigg)ds\\
&= |V_m|\big(|U_{ini}|+r\kappa_*\big)+2\sum_{j\in V_m}\norm{w^j}_r  \\ &\; \; \;\;\;\;\;+ \int_{0}^r \bigg(C\sum_{j\in V_m}\norm{Z^j}_s + \kappa_*\sum_{j\in V_m} \norm{Z^{j}}_s \bigg)ds,
\end{align*}
since $Z^j$ is $V_m$-periodic (by Lemma \ref{Lemma Solution Equivalence}). The lemma now follows through an application of Gronwall's Inequality.
\end{proof}
The following lemma notes that $\Psi^n$ and $\Psi$ preserve the periodicity of $w$.
\begin{lemma}\label{Lemma Solution Equivalence}
Suppose that $w\in\bar{\T}^{\Z^d}_\lambda$ is $V_m$-periodic. Then for all $j\in\Z^d$,
\begin{align*}
\Psi^n(w)^j &= \Psi^n(w)^{j \modd V_m} \\
\Psi(w)^j &= \Psi(w)^{j \modd V_m} \\
\Psi^n(\tilde{W}^n)^j &= U^{j \modd V_n},
\end{align*}
where $U$ is defined in \eqref{eq:fundamentalmult}. Furthermore
\begin{equation}
\hat{\mu}^n(U) = \hat{\mu}^n(W^n) \circ (\Psi^n)^{-1}.\label{eq:equivalence of empirical measures}
\end{equation}
\end{lemma}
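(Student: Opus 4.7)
The plan is to prove the four claims in sequence, with the periodicity of $\Psi^n(w)$ and $\Psi(w)$ serving as the foundation for the remaining identities. For the first two statements, I would simply recall that in the proof of Lemma \ref{Lemma : Psi periodic solution} the solutions were constructed explicitly as $V_m$-periodic extensions of a finite-dimensional ODE. By the uniqueness of solutions to \eqref{eq:fundamentalmult 2} and \eqref{eq:fundamentalmult 3} (Lemma \ref{Lemma Psi Lipschitz} with $v=w$), these periodic extensions must coincide with $\Psi^n(w)$ and $\Psi(w)$ respectively, which is precisely $\Psi^n(w)^j = \Psi^n(w)^{j\modd V_m}$ and $\Psi(w)^j = \Psi(w)^{j\modd V_m}$.

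For the identity $\Psi^n(\tilde{W}^n)^j = U^{j \modd V_n}$, I would apply the preceding with $m=n$ (using that $\tilde{W}^n$ is $V_n$-periodic by construction) to conclude that $X := \Psi^n(\tilde{W}^n)$ is itself $V_n$-periodic. Setting $\bar{U}^j := X^j$ for $j\in V_n$, the $V_n$-periodicity lets me rewrite the sum in \eqref{eq:fundamentalmult 2} as $\sum_{k\in V_n}\Lambda^{k}_s(X^j, X^{(j+k)\modd V_n})$, so $\bar{U}$ satisfies the defining equation \eqref{eq:fundamentalmult} for $U$. Uniqueness of the finite-dimensional system \eqref{eq:fundamentalmult} (which follows from the argument of Lemma \ref{Lemma Psi Lipschitz} applied in $V_n$) then forces $\bar{U}=U$, giving the claim.

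For \eqref{eq:equivalence of empirical measures}, the crucial ingredient is the shift-equivariance $\Psi^n(S^\ell w) = S^\ell \Psi^n(w)$ for every $\ell\in\Z^d$ (applied to $V_n$-periodic arguments, where $\Psi^n$ is well-defined via the first part). To prove this, set $Y := \Psi^n(S^\ell w)$ and $Z^j := Y^{j-\ell}$; substituting into \eqref{eq:fundamentalmult 2} and using the fact that $\mathfrak{b}_s$ and $\Lambda^k_s$ do not depend on the lattice site, one finds that $Z$ solves \eqref{eq:fundamentalmult 2} with driver $w$, so $Z = \Psi^n(w)$ by uniqueness. Since $\delta_{S^j \tilde{W}^n} \circ (\Psi^n)^{-1} = \delta_{\Psi^n(S^j \tilde{W}^n)}$, combining shift-equivariance with the third identity yields
\[
\hat{\mu}^n(W^n) \circ (\Psi^n)^{-1} = \frac{1}{|V_n|}\sum_{j\in V_n} \delta_{\Psi^n(S^j \tilde{W}^n)} = \frac{1}{|V_n|}\sum_{j\in V_n} \delta_{S^j \Psi^n(\tilde{W}^n)} = \frac{1}{|V_n|}\sum_{j\in V_n} \delta_{S^j \tilde{U}} = \hat{\mu}^n(U).
\]

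I do not expect a genuine obstacle here; everything follows from uniqueness together with the manifest translation invariance of the drift and interaction. The only point requiring mild care is checking that $S^j\tilde{W}^n$ remains $V_n$-periodic (which it does, since $\tilde{W}^n$ is), so that $\Psi^n$ is indeed defined on it and the shift-equivariance argument is legitimate.
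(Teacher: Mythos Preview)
Your proposal is correct and close in spirit to the paper's proof, but the ordering is reversed. The paper first asserts shift-equivariance $\Psi(S^j w)=S^j\Psi(w)$ and $\Psi^n(S^j w)=S^j\Psi^n(w)$ ``from the definition'', and then deduces periodicity: if $w$ is $V_m$-periodic and $k=j\modd V_m$, then $S^jw=S^kw$, hence $S^j\Psi(w)=S^k\Psi(w)$, and evaluating at index $0$ gives $\Psi(w)^j=\Psi(w)^k$. You instead obtain periodicity directly from the explicit periodic construction in Lemma~\ref{Lemma : Psi periodic solution} plus uniqueness, and only afterwards prove shift-equivariance (via uniqueness) for the empirical-measure identity. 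Both routes rest on the same two ingredients---translation invariance of the drift and interaction, and uniqueness from Lemma~\ref{Lemma Psi Lipschitz}---so neither is materially simpler; your version is a bit more explicit, and in particular you spell out the third identity $\Psi^n(\tilde{W}^n)^j=U^{j\modd V_n}$, which the paper's proof passes over in silence.
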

\begin{proof}
It follows from the definition that for all $j\in \Z^d$, $\Psi(S^j w) = S^j\Psi(w)$ and $\Psi^n(S^j w) = S^j\Psi^n(w)$. If $k\in V_m$ is such that $j\modd V_m = k$, then from the definition $S^j w = S^k w$, which gives us the first two results. \eqref{eq:equivalence of empirical measures} follows directly from this and the definition of the empirical measure.
\end{proof}
We now prove the main result: Theorem \ref{Theorem Main LDP}. The proof uses parts of \cite[Theorem 4.9]{baxter-jain:93}.
\begin{proof}[Proof of Theorem \ref{Theorem Main LDP}]
From Lemma \ref{Lemma Solution Equivalence}, the law of $\hat{\mu}^n(W^n)\circ(\Psi^n)^{-1}$ is $\Pi^n$. By Lemma \ref{Lemma Psi Lipschitz}, the maps $\mu \to \mu \circ \Psi^{-1}$ and $\mu \to \mu \circ (\Psi^n)^{-1}$ are continuous. Therefore using \cite[Corollary 4.2.41]{dembo-zeitouni:97}, and our assumption that $\Pi^n_W$ satisfies an LDP with good rate function, it suffices to prove that for any $\delta > 0$,
\begin{equation}\label{eq to prove Dec}
\lim_{n\to\infty}\frac{1}{|V_n|}\log\PP\bigg(d^{\lambda,\mathcal{P}}\bigg(\hat{\mu}^n(W^n)\circ(\Psi^n)^{-1},\hat{\mu}^n(W^n)\circ \Psi^{-1}\bigg) > \delta\bigg) = -\infty.
\end{equation}
Let $\tilde{W}^n \in \T^{\Z^d}_\lambda$ be the $V_n$-periodic interpolation of $(W^{n,j})_{j\in V_n}$ - i.e. such that $\tilde{W}^{n,k} := W^{k\modd V_n}$ for all $k\in \Z^d$. Let $X^n = \Psi(\tilde{W}^n) - \Psi^n(\tilde{W}^n)$, $Y^n = \Psi(\tilde{W}^n)$ and $Z^n = \Psi^n(\tilde{W}^n)$. For $A\in\mathcal{B}(\mathcal{T}_\lambda^{\Z^d})$, let $A^\delta = \lbrace x\in \mathcal{T}_\lambda^{\Z^d}:\norm{x-y}_{T,\lambda} \leq \delta \text{ for some }y\in A\rbrace$ be the closed blowup of $A$, and let $\mathfrak{Z}(\delta)$ be the closed blowup of $\lbrace 0\rbrace$. Then, letting $\one$ denote the indicator function, and noting that $S^j\Psi(\tilde{W}^n) = \Psi(S^j\tilde{W}^n)$ and $S^j\Psi^n(\tilde{W}) = \Psi^n(S^j\tilde{W})$ (as stated in Lemma \ref{Lemma Solution Equivalence}), we see that
\begin{align*}
\hat{\mu}^n(W^n)\circ\Psi^{-1}(A) &= \frac{1}{|V_n|}\sum_{j\in V_n}\one_A\left( S^j Z^n + S^j X^n \right) \\
&\leq \frac{1}{|V_n|}\sum_{j\in V_n}\bigg[ \one_A\left(S^j Z^n + S^j X^n\right)\one_{\mathfrak{Z}(\delta)}(S^j X^n) \\ & + \one_{\mathfrak{Z}(\delta)^c}\left(S^j X^n \right)\bigg].
\end{align*} 
Now if $S^j X^n \in \mathfrak{Z}(\delta)$ and $\big(S^j Z^n + S^j X^n\big) \in A$, then $S^j Z^n \in A^\delta$. This means that
\[
\one_A\left(S^j Z^n + S^j X^n\right)\one_{\mathfrak{Z}(\delta)}(S^j X^n) \leq \one_{A^\delta}(S^j Z^n). 
\]
We may therefore conclude that, after letting $\#\lbrace\cdot \rbrace$ denote the cardinality of a finite set,
\begin{align*}
\hat{\mu}^n(W^n)\circ\Psi^{-1}(A) \leq& \frac{1}{|V_n|}\sum_{j\in V_n}\one_{A^\delta}(S^j Z^n) + \frac{1}{|V_n|} \#\left\lbrace j\in V_n: \norm{S^j X^n}_{T,\lambda} > \delta\right\rbrace\\
=&\hat{\mu}^n(W^n)\circ(\Psi^n)^{-1}(A^\delta) + \frac{1}{|V_n|} \#\left\lbrace j\in V_n: \norm{S^j X^n}_{T,\lambda} > \delta\right\rbrace.
\end{align*}
Therefore, using the definition of the Levy-Prokhorov Metric,
\begin{multline*}
d^{\lambda,\mathcal{P}}\bigg(\hat{\mu}^n(W^n)\circ(\Psi^n)^{-1},\hat{\mu}^n(W^n)\circ \Psi^{-1}\bigg) \\ \leq \max\left\lbrace \delta, \frac{1}{|V_n|} \#\lbrace j\in V_n: \norm{S^j X^n}_{T,\lambda} > \delta\rbrace \right\rbrace. 
\end{multline*}
Hence
\begin{align*}
\PP\bigg(d^{\lambda,\mathcal{P}}\bigg(\hat{\mu}^n(W^n)\circ(\Psi^n)^{-1}&,\hat{\mu}^n(W^n)\circ \Psi^{-1}\bigg) > \delta\bigg) \\ 
&\leq \PP\bigg( \frac{1}{|V_n|} \#\left\lbrace j\in V_n: \norm{S^j X^n}_{T,\lambda} > \delta \right\rbrace > \delta \bigg) \\
&\leq \PP\bigg( \frac{1}{|V_n|} \sum_{j\in V_n} \norm{S^j X^n}_{T,\lambda} > \delta^2 \bigg)\\
&= \PP\bigg( \frac{1}{|V_n|}\sum_{j\in V_n, k\in \Z^d}\lambda^k \norm{X^{n,(j+k)\modd V_n}}_{T} > \delta^2\bigg),
\end{align*}
since $X^{n,m} = X^{n,m\modd V_n}$ for all $m\in \Z^d$. Now we claim that
\begin{equation}
\sum_{j\in V_n,k\in\Z^d}\lambda^k \norm{X^{n,(j+k)\modd V_n}}_{T} = \sum_{l\in V_n} \norm{X^{n,l}}_T.
\end{equation}
This is because for any $l\in V_n$ and $k\in\Z^d$, there exists a unique $j\in V_n$ such that $(j+k)\modd V_n = l$. Hence the coefficient of $ \norm{X^{n,l}}_T$ on the right is $\sum_{k\in\Z^d}\lambda^k = 1$.
Thus, making use of the previous two results, 
\begin{multline}
\PP\bigg(d^{\lambda,\mathcal{P}}\bigg(\hat{\mu}^n(W^n)\circ(\Psi^n)^{-1},\hat{\mu}^n(W^n)\circ \Psi^{-1}\bigg) > \delta\bigg)\\
\leq \Prob\left( \sum_{j\in V_n}\norm{X^{n,j}}_{T} > |V_n| \delta^2\right).
\end{multline}
Using Lemma \ref{Lemma Bound Difference} we may thus conclude that \eqref{eq to prove Dec} is satisfied, i.e. that
\begin{multline}
\lsup{n}\frac{1}{|V_n|} \log \PP\bigg(d^{\lambda,\mathcal{P}}\bigg(\hat{\mu}^n(W^n)\circ(\Psi^n)^{-1},\hat{\mu}^n(W^n)\circ \Psi^{-1}\bigg) > \delta\bigg) = -\infty.
\end{multline}
\end{proof}
Recall that $\tilde{W}^n \in \T^{\Z^d}$ is the periodic interpolant of $W^n$, i.e. such that $\tilde{W}^{n,j} = W^{n,j\modd V_n}$.
\begin{lemma}\label{Lemma Bound Difference}
For any $\delta > 0$,
\begin{equation}
\lsup{n} \frac{1}{|V_n|}\log\Prob\left( \sum_{j\in V_n}\norm{\Psi(\tilde{W}^n)^j-\Psi^n(\tilde{W}^n)^{j}}_{T} > |V_n| \delta^2\right) = -\infty.
\end{equation}
\end{lemma}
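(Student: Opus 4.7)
Write $X = \Psi(\tilde{W}^n)$ and $Z = \Psi^n(\tilde{W}^n)$. By Lemma \ref{Lemma Solution Equivalence}, since $\tilde{W}^n$ is $V_n$-periodic, both $X$ and $Z$ are $V_n$-periodic. The plan is to obtain a deterministic pathwise bound
\begin{equation*}
\sum_{j\in V_n}\norm{X^j - Z^j}_T \;\leq\; \bar{\kappa}_n\Bigl(K_1 |V_n| + K_2 \sum_{j\in V_n}\norm{W^{n,j}}_T\Bigr),
\end{equation*}
for constants $K_1,K_2$ independent of $n$, and then exploit $\bar\kappa_n\to 0$ together with Assumption \ref{Assumption Noise LDP} to conclude the superexponential decay.

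First I subtract \eqref{eq:fundamentalmult 2} from \eqref{eq:fundamentalmult 3} and split the interaction difference as
\begin{equation*}
\sum_{k\in\Z^d}\Lambda^k_s(X^j,X^{j+k}) - \sum_{k\in V_n}\Lambda^k_s(Z^j,Z^{j+k}) = T^j_s + L^j_s,
\end{equation*}
where $T^j_s := \sum_{k\notin V_n}\Lambda^k_s(X^j,X^{j+k})$ is the truncation error and $L^j_s$ is the Lipschitz error over $k\in V_n$. Using the absolute bound in Assumption \ref{Assumption NonUniform} gives $|T^j_s|\leq \sum_{k\notin V_n}\kappa^k(1+\norm{X^j}_s+\norm{X^{j+k}}_s)$, and using the Lipschitz property along with $|\mathfrak{b}_s(X^j)-\mathfrak{b}_s(Z^j)|\leq \tilde C \norm{X^j-Z^j}_s$ yields a bound on $L^j_s + \mathfrak{b}_s(X^j)-\mathfrak{b}_s(Z^j)$ of the form $C\norm{X^j-Z^j}_s + \sum_{k\in V_n}\kappa^k\norm{X^{j+k}-Z^{j+k}}_s$.

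Next I sum over $j\in V_n$, and this is where the $V_n$-periodicity of $X$ and $Z$ is essential: it lets me re-index $\sum_{j\in V_n}\norm{X^{j+k}-Z^{j+k}}_s = \sum_{l\in V_n}\norm{X^l-Z^l}_s$ with no boundary leftover, and similarly for $\sum_{j\in V_n}\norm{X^{j+k}}_s$. Setting $f(t):=\sum_{j\in V_n}\norm{X^j-Z^j}_t$, I obtain an integral inequality of the shape
\begin{equation*}
f(t) \leq \int_0^t (C+\kappa_*) f(s)\,ds + \bar{\kappa}_n T \Bigl(|V_n| + 2\sum_{j\in V_n}\norm{X^j}_T\Bigr),
\end{equation*}
to which Gronwall's inequality applies, producing $f(T)\leq \bar\kappa_n T e^{(C+\kappa_*)T}(|V_n|+2\sum_{j\in V_n}\norm{X^j}_T)$. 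Invoking Lemma \ref{Lemma Bound Psin} (with $m=n$, noting $\tilde W^n$ is $V_n$-periodic and $\norm{\tilde W^{n,j}}_T = \norm{W^{n,j}}_T$) to control $\sum_{j\in V_n}\norm{X^j}_T$ by $|V_n|$ and $\sum_{j\in V_n}\norm{W^{n,j}}_T$ delivers the claimed pathwise bound.

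Finally, on the event $\{f(T)>|V_n|\delta^2\}$ the bound forces $\sum_{j\in V_n}\norm{W^{n,j}}_T > |V_n|(\delta^2 - \bar\kappa_n K_1)/(\bar\kappa_n K_2)$; since $\bar\kappa_n\to 0$, for any fixed $a>0$ and all sufficiently large $n$ this event is contained in $\{\sum_{j\in V_n}\norm{W^{n,j}}_T > a|V_n|\}$. Hence
\begin{equation*}
\lsup{n}\tfrac{1}{|V_n|}\log\Prob\bigl(f(T)>|V_n|\delta^2\bigr) \leq \lsup{n}\tfrac{1}{|V_n|}\log\Prob\Bigl(\sum_{j\in V_n}\norm{W^{n,j}}_T > a|V_n|\Bigr),
\end{equation*}
and letting $a\to\infty$ and applying \eqref{eq: a limit} gives the limit $-\infty$. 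The main obstacle to watch out for is the periodicity-based re-indexing in the Gronwall step: without the fact that both $X$ and $Z$ share the same $V_n$-periodicity, the sum $\sum_{j\in V_n}\norm{X^{j+k}-Z^{j+k}}_s$ would not close back to $f(s)$, and the crucial factor $\bar\kappa_n$ would not appear on the right-hand side.
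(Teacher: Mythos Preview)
Your overall strategy---pathwise bound via Gronwall, re-indexing by $V_n$-periodicity, then Assumption \ref{Assumption Noise LDP}---matches the paper's exactly, including the final reduction to \eqref{eq: a limit}. There is however a genuine gap in the way you handle the drift $\mathfrak{b}$.

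You write ``$|\mathfrak{b}_s(X^j)-\mathfrak{b}_s(Z^j)|\leq \tilde C \norm{X^j-Z^j}_s$'', but this two-sided Lipschitz bound is \emph{not} part of Assumption \ref{Assumption Absolute Bound}. What is assumed is only the one-sided estimate: if $X^j_t\geq Z^j_t$ then $\mathfrak{b}_t(X^j)-\mathfrak{b}_t(Z^j)\leq \tilde C\norm{X^j-Z^j}_t$, and the reverse inequality if $X^j_t\leq Z^j_t$. In the Fitzhugh--Nagumo example of Section \ref{Section Example} the drift contains a $-\tfrac{1}{3}(U_t)^3$ term, so $|\mathfrak{b}_s(X^j)-\mathfrak{b}_s(Z^j)|$ is genuinely unbounded by a multiple of $\norm{X^j-Z^j}_s$; your integral inequality for $\norm{X^j-Z^j}_t$ therefore does not follow from simply taking absolute values inside the integral.

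The paper circumvents this by the sign argument you will recognise from Lemmas \ref{Lemma Psi Lipschitz} and \ref{Lemma Bound Psin}: for each $j$ one picks $\tau<t$ with $X^j_\tau=Z^j_\tau$, with $X^j_s-Z^j_s$ of constant sign on $[\tau,t]$, and with $|X^j_t-Z^j_t|=\norm{X^j-Z^j}_T$. On such an interval one may apply the one-sided bounds \eqref{eq bound 3 start}--\eqref{eq bound 4 start} (which package $\mathfrak{b}$ together with the part of the interaction depending on the first argument) to obtain
\[
|X^j_t-Z^j_t|\leq \int_0^t\Big(C\norm{X^j-Z^j}_s+\sum_{k\in\Z^d}\kappa^k\norm{X^{j+k}-Z^{j+k}}_s\Big)ds + \text{(truncation term)}.
\]
After this, your summation over $j\in V_n$, the periodicity re-indexing, Gronwall, and the appeal to Lemma \ref{Lemma Bound Psin} all go through exactly as you describe. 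Incidentally, the paper places the truncation error on the $Z=\Psi^n(\tilde W^n)$ side rather than on $X=\Psi(\tilde W^n)$, but both choices work since Lemma \ref{Lemma Bound Psin} applies to either $\Psi$ or $\Psi^n$.
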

\begin{proof}
Write $Y^n = \Psi(\tilde{W}^n)$ and $Z^n = \Psi^n(\tilde{W}^n)$. Suppose that $\tau,t$ are such that $Y^{n,j}_\tau = Z^{n,j}_\tau$ and $\norm{Y^{n,j} - Z^{n,j}}_T = |Y^{n,j}_t- Z^{n,j}_t|$. We may assume that $Y^{n,j}_s- Z^{n,j}_s$ is of the same sign for all $s\in [\tau,t]$. We then see, making use of \eqref{eq bound 3 start}-\eqref{eq bound 4 start} and Assumption \ref{Assumption NonUniform}, that
\begin{align*}
|Y^{n,j}_t - Z^{n,j}_t| &= \int_\tau^t\bigg[ \mathfrak{b}_s(Y^{n,j}) - \mathfrak{b}_s(Z^{n,j}) + \sum_{k\in\Z^d}\big(\Lambda^k_s(Y^{n,j},Y^{n,j+k})\\&-\Lambda^k_s(Z^{n,j},Z^{n,j+k})\big) + \sum_{k\notin V_n}\Lambda^k_s(Z^{n,j},Z^{n,j+k})\bigg]ds \\
&\leq \int_\tau^t \bigg[C \norm{Y^{n,j}-Z^{n,j}}_s + \sum_{k\in\Z^d} \kappa^k \norm{Y^{n,j+k}-Z^{n,j+k}}_s \\ &+ \sum_{k\notin V_n}\kappa^k\big(1+\norm{Z^{n,j}}_s + \norm{Z^{n,j+k}}_s\big)\bigg] ds\\
&\leq \int_0^t \bigg[C \norm{Y^{n,j}-Z^{n,j}}_s + \sum_{k\in\Z^d} \kappa^k \norm{Y^{n,j+k}-Z^{n,j+k}}_s \\ &+ \sum_{k\notin V_n}\kappa^k\big(\norm{Z^{n,j}}_s + \norm{Z^{n,j+k}}_s\big)\bigg] ds + t\bar{\kappa}_n.
\end{align*}
We may thus conclude that
\begin{multline*}
\norm{Y^{n,j} - Z^{n,j}}_t \leq  \int_0^t \bigg[C \norm{Y^{n,j}-Z^{n,j}}_s + \sum_{k\in\Z^d} \kappa^k \norm{Y^{n,j+k}-Z^{n,j+k}}_s \\ + \sum_{k\notin V_n}\kappa^k\big(\norm{Z^{n,j}}_s + \norm{Z^{n,j+k}}_s\big)\bigg] ds + t\bar{\kappa}_n.
\end{multline*}
Hence summing over $j$, we see that
\begin{align*}
\sum_{j\in V_n}\norm{Y^{n,j}-Z^{n,j}}_t &\leq \int_0^t\bigg[ C\sum_{j\in V_n}\norm{Y^{n,j}-Z^{n,j}}_s + \sum_{j\in V_n,k\in \Z^d}\kappa^k \norm{Y^{n,j+k}-Z^{n,j+k}}_s \\ &+ \sum_{j\in V_n,k\notin V_n}\kappa^k\big(\norm{Z^{n,j}}_s + \norm{Z^{n,j+k}}_s\big)\bigg] ds + t|V_n|\bar{\kappa}_n,\\
&= \int_0^t\bigg[ C\sum_{j\in V_n}\norm{Y^{n,j}-Z^{n,j}}_s + \kappa_*\sum_{j\in V_n} \norm{Y^{n,j}-Z^{n,j}}_s \\ &+ 2\bar{\kappa}_n\sum_{j\in V_n}\norm{Z^{n,j}}_s\bigg] ds + t|V_n|\bar{\kappa}_n,
\end{align*}
where we have used the $V_n$-periodicity of $Y^n$ and $Z^n$, i.e. $Y^{n,j} = Y^{n,j\modd V_n}$ and $Z^{n,j} = Z^{n,j\modd V_n}$ (as noted in Lemma \ref{Lemma Solution Equivalence}). By Gronwall's Inequality,
\begin{equation}
\sum_{j\in V_n}\norm{Y^{n,j}-Z^{n,j}}_T \leq C_2\bar{\kappa}_n\bigg(|V_n|+  \sum_{j\in V_n}\norm{Z^{n,j}}_T\bigg),
\end{equation}
for some constant $C_2$. We may thus infer using Lemma \ref{Lemma Bound Psin} that for some constant $C_3$,
\begin{align*}
\sum_{j\in V_n}\norm{Y^{n,j}-Z^{n,j}}_T \leq \bar{\kappa}_n C_3\bigg( |V_n| +  \sum_{j\in V_n}\norm{W^{n,j}}_T\bigg).
\end{align*}
Thus
\begin{multline}
\Prob\left( \sum_{j\in V_n}\norm{Y^{n,j}-Z^{n,j}}_{T} > |V_n| \delta^2\right) \\ \leq \Prob\left(  \bar{\kappa}_n C_3 \sum_{j\in V_n}\norm{W^{n,j}}_T > \delta^2 |V_n| - \bar{\kappa}_n C_3 |V_n|\right) \\
=  \Prob\left(  \sum_{j\in V_n}\norm{W^{n,j}}_T > |V_n|\bigg(\frac{\delta^2}{\bar{\kappa}_n C_3}- 1\bigg)\right). 
\end{multline}
For any $a > 0$, since $\bar{\kappa}_n \to 0$ as $n \to \infty$, for $n$ sufficiently large,
\begin{multline*}
\Prob\left(  \sum_{j\in V_n}\norm{W^{n,j}}_T > |V_n|\bigg(\frac{\delta^2}{\bar{\kappa}_n C_3}- 1\bigg)\right) \leq \Prob\left(  \sum_{j\in V_n}\norm{W^{n,j}}_T > a |V_n|\right).
\end{multline*}
The lemma now follows through Assumption \ref{Assumption Noise LDP}.
\end{proof}
\section{Large Deviation Principle for a Gaussian Process which is a Martingale Correlated Modulo $V_n$}\label{Section Noise LDP}
In this section we give an example of a model of correlated noise, where the noise is a martingale with Gaussian marginals and the correlations are `modulo $V_n$'. We explained why we expect the noise to be correlated in the introduction. We will prove that the noise satisfies Assumption \ref{Assumption Noise LDP}: in particular, that the laws of the empirical measure $\hat{\mu}^n(W^n)$ satisfy a Large Deviation Principle. This model of the noise is used in the Fitzhugh-Nagumo example of Section \ref{Section Example}. The Large Deviation Principle is stated in the main result of this section: Theorem \ref{Theorem PinW}. It is important to note that the LDP for the law of $\hat{\mu}^n(W^n)$ is relative to the topology on $\mathcal{P}(\T^{\Z^d}_\lambda)$ induced by the norm $\norm{\cdot}_{T,\lambda}$: this is not the standard cylindrical topology on $\T^{\Z^d}$. An LDP for the law of $\hat{\mu}^n(W^n)$ relative to the weak topology on $\mathcal{P}(\T^{\Z^d})$ induced by the cylindrical topology on $\T^{\Z^d}$ is an immediate consequence of Theorem \ref{Theorem PinW} through the Contraction Principle \cite[Theorem 4.2.21]{dembo-zeitouni:97}. We now outline our model in more detail.

The noise $W^n := (W^{n,j}_t)_{j\in V_n,t\in [0,T]}$ is a correlated martingale over $\T^{V_n}$, which we define as follows. The correlation is specified to be `modulo $V_n$', in keeping with the general tenor of this paper. The noise $\big(W^{n,j}\big)_{j\in V_n}$ is taken to be a continuous Gaussian process, i.e. such that for any finite set of times $\big( t_q\big)_{p=1}^M \subset [0,T]$, $\big(W^{n,j}_{t_q}\big)_{j\in V_n,p\in [1,M]}$ have a finite-dimensional Gaussian distribution, and $W^{n,j} \in \mathcal{T}$. Let $(a^j)_{j\in \Z^d} \subset \mathcal{C}([0,T],\R)$ be constants such that 
\begin{equation}
\sum_{j\in\Z^d}\norm{a^j}_T < \infty.
\end{equation}
We stipulate that for $j,k \in V_n$, $0\leq s\leq t \leq T$, 
\begin{align}
\Exp[W^{n,j}_t] =& 0 \text{ and }\\
\Exp\big[ W^{n,j}_s W^{n,k}_t\big] =& \int_0^s a^{(k-j)\modd V_n}(r)dr.\label{eq: W expectation definition}
\end{align}
The fact that a process with the above properties exists (subject to Assumption \ref{Assumption tilde a} below) may be inferred from the proof of Lemma \ref{Lemma Identify Pi n Pi n B}: one could define $W^n$ to be $\big(Z^{n,j}\big)_{j\in V_n}$ in \eqref{eq: Zn defn 1}. $Z^n$ is a moving-average (modulo $V_n$) transformation of $|V_n|$ independent Brownian motions. Define, respectively, the discrete and continuous Fourier Transforms, for $k\in V_n$ and $\theta \in [-\pi,\pi]^d$,
\begin{align}
\tilde{a}^{n,k}(t) &:= \sum_{j\in V_n}\exp \bigg(-\frac{2\pi i\langle j,k\rangle}{2n+1}\bigg)a^j(t)\\
\tilde{a}(t,\theta) &:= \sum_{j\in \Z^d}\exp\big(-i\langle j,\theta\rangle\big)a^j(t).
\end{align}
The following assumptions are needed to guarantee that the process $W^{n,j}_t$ exists and is well-behaved.
\begin{assumption}\label{Assumption tilde a}
Assume that if $j(p) = \pm k(p)$ for all $p\in \lbrace 1,\ldots,d\rbrace$, then $a^{j} = a^k$. It follows from this that $\tilde{a}^{n,k}(t),\tilde{a}(t,\theta) \in \R$ and if $j(p) = \pm k(p)$ for all $p\in \lbrace 1,\ldots,d\rbrace$, then $\tilde{a}^{n,j} = \tilde{a}^{n,k}$. We assume that there exists a constant $\tilde{a}^{max}$ such that for all $n\in\Z^+$, $k\in V_n$ and $t\in [0,T]$, $0 \leq \tilde{a}^{n,k}(t) \leq \tilde{a}^{max}$. We also assume that for all $t\in [0,T]$ and $\theta \in [-\pi,\pi]^d$, $0\leq \tilde{a}(t,\theta) \leq \tilde{a}^{max}$.
\end{assumption}
Define, for $\theta \in [-\pi,\pi]^d$ and $j\in \Z^d$,
\begin{align*}
\tilde{c}(t,\theta) &= \sqrt{\tilde{a}(t,\theta)} \\
c^j(t) &= \frac{1}{(2\pi)^d}\int_{[-\pi,\pi]^d}\exp\big(ij\omega\big)\tilde{c}(t,\omega)d\omega,
\end{align*}
noting that these variables are in $\T$. We assume that 
\begin{equation}\label{eq: c norm absolute}
\sum_{j\in \Z^d}\norm{c^j}_T < \infty.
\end{equation}
We also assume that $\frac{d}{dt}\tilde{c}(t,\theta) \in \T$ exists for all $\theta \in [-\pi,\pi]^d$, and that it has an absolutely convergent Fourier Series, i.e. the following properties are satisfied: for $k\in \Z^d$, $\theta \in [-\pi,\pi]^d$ and $t\in [0,T]$,
\begin{align}
\mathfrak{f}^k &:=\frac{d}{dt}c^k \in \T\label{assumption f absolute convergence 0} \\
\mathfrak{f}^k(t) &= \frac{1}{(2\pi)^d}\int_{[-\pi,\pi]^d}\exp\big(i\langle k,\omega\rangle\big)\frac{d}{dt}\tilde{c}(t,\omega)d\omega\\
\frac{d}{dt}\tilde{c}(t,\theta) &= \sum_{k\in\Z^d}\exp\big(-i\langle k,\theta\rangle\big)\mathfrak{f}^k(t) \text{ and }\\
\sum_{k\in \Z^d}\norm{\mathfrak{f}^k}_T &< \infty.\label{assumption f absolute convergence}
\end{align}
For $k\in V_n$, define $\tilde{c}^{n,k}(t) = \sqrt{\tilde{a}^{n,k}(t)}$. Then define, for $j\in V_n$,
\begin{equation*}
c^{n,j}(t) = \frac{1}{|V_n|}\sum_{k\in V_n}\exp\bigg(\frac{2\pi i\langle j,k\rangle}{2n+1}\bigg)\tilde{c}^{n,k}(t).
\end{equation*}
Define $\mathfrak{c}^{n,k}$ as follows. If $k\in V_n$, then
\begin{equation}
\mathfrak{c}^{n,k}(t) = c^{n,k}(t) - c^k(t),\label{defn cnk 1}
\end{equation}
otherwise if $k\notin V_n$, then
\begin{equation}
\mathfrak{c}^{n,k}(t) = -c^k(t).\label{defn cnk 2}
\end{equation}
Let
\begin{equation}\label{defn etan}
\eta_{n,j}= \norm{\mathfrak{c}^{n,j}}_T + T\norm{\frac{d}{dt}\mathfrak{c}^{n,j}}_T,
\end{equation}
and $\eta_{n,*} = \sum_{j\in\Z^d}\eta_{n,j}$. 
\begin{assumption}\label{Assumption Eta}
We assume \eqref{eq: c norm absolute} and \eqref{assumption f absolute convergence 0}- \eqref{assumption f absolute convergence} hold, and that
\begin{equation*}
\lim_{n\to\infty}\eta_{n,*}= 0.
\end{equation*}
\end{assumption}
Since $\sum_{k\in\Z^d}\big(\norm{c^k}_T + \norm{\mathfrak{f}^k}_T\big) < \infty$,  and recalling that $\kappa^{k} = \kappa^{-k}$ (as noted in \eqref{eq kappa jk}), we may assume that
\begin{equation}\label{eq: ck inequality}
\norm{c^k}_T + \norm{\mathfrak{f}^k}_T \leq \kappa^{k},\kappa^{-k}
\end{equation}
(if necessary we can make the change of definition $\kappa^{k}\to \max\big\lbrace\kappa^{k},\big(\norm{c^k}_T + \norm{\mathfrak{f}^k}_T\big)\big\rbrace$). By Lemma \ref{Lemma Switch Inequality},
\begin{equation}\label{eq: lambda c inequality}
\sum_{j\in\Z^d}\lambda^{j}\norm{c^{m-j}}_T \leq 2\kappa_*\lambda^m \text{ and }\sum_{j\in\Z^d}\lambda^{j}\norm{\mathfrak{f}^{m-j}}_T \leq 2\kappa_*\lambda^m .
\end{equation}
Let $\Pi^n_W$ be the law of $\hat{\mu}^n(W^n)$. The main result of this section is the following.
\begin{theorem}\label{Theorem PinW}
Under Assumptions \ref{Assumption tilde a} and \ref{Assumption Eta}, the laws $(\Pi^n_W)_{n\in\Z^+} \subset \mathcal{P}\big(\mathcal{P}(\T^{\Z^d}_\lambda)\big)$ satisfy a Large Deviation Principle with good rate function.  This LDP is relative to the topology on $\mathcal{P}(\T^{\Z^d}_\lambda)$ generated by the Levy-Prokhorov metric $d^{\lambda,\mathcal{P}}(\cdot,\cdot)$ defined in Section \ref{Subsection Preliminaries}. 
\end{theorem}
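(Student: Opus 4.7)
The plan is to realise $W^n$ as a linear Gaussian image of an i.i.d.\ family of Brownian motions, transfer an LDP through this linear map, and then reconcile the resulting law with $\Pi^n_W$ by an exponential equivalence argument. To begin, introduce i.i.d.\ standard Brownian motions $(B^{n,k})_{k \in V_n}$ and form the moving-average process
\[
Z^{n,j}_t := \sum_{k\in V_n}\int_0^t c^{n,(j-k)\modd V_n}(s)\, dB^{n,k}_s.
\]
Using the discrete convolution/Plancherel identity together with $\tilde{c}^{n,k} = \sqrt{\tilde{a}^{n,k}}$ (granted by Assumption \ref{Assumption tilde a}), one verifies that $Z^n$ has exactly the covariance prescribed in \eqref{eq: W expectation definition}; since both are centred Gaussian, $\Pi^n_W$ coincides with the law of $\hat\mu^n(Z^n)$.

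Next, a Sanov-type argument for process-level empirical measures of i.i.d.\ variables indexed by $\Z^d$ (in the tradition of Donsker--Varadhan, Baxter--Jain and Georgii) yields an LDP for $\hat\mu^n(B^n)$ in $\mathcal{P}(\T^{\Z^d}_\lambda)$ with rate function equal to the specific relative entropy with respect to the product Wiener measure; exponential tightness in the $\lambda$-weighted topology follows from sub-Gaussian tails of $\norm{B^{n,j}}_T$ together with $\sum_j \lambda^j < \infty$ (this also yields \eqref{eq: a limit} of Assumption \ref{Assumption Noise LDP}). Then define the infinite-volume moving-average operator
\[
\mathfrak{Z}: (b^j)_{j\in\Z^d}\longmapsto \Bigl(\sum_{k\in\Z^d}\int_0^{\cdot} c^{j-k}(s)\,db^k_s\Bigr)_{j\in\Z^d}.
\]
Thanks to \eqref{eq: ck inequality} and \eqref{eq: lambda c inequality}, $\mathfrak{Z}$ is continuous on the relevant $\lambda$-weighted path space, so the contraction principle produces an LDP for $\hat\mu^n(\mathfrak{Z}(\tilde B^n))$.

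The final step is to show that $\hat\mu^n(Z^n)$ and $\hat\mu^n(\mathfrak{Z}(\tilde B^n))$ are exponentially equivalent under $d^{\lambda,\mathcal{P}}$, at which point \cite[Theorem 4.2.13]{dembo-zeitouni:97} closes the argument. Arguing as in the proof of Theorem \ref{Theorem Main LDP}, this reduces to showing that $\tfrac{1}{|V_n|}\sum_{j\in V_n}\norm{Z^{n,j}-\mathfrak{Z}(\tilde B^n)^j}_T$ is superexponentially small. The pointwise difference is a stochastic integral whose coefficients are the $\mathfrak{c}^{n,k}$ of \eqref{defn cnk 1}--\eqref{defn cnk 2}; an integration by parts using the derivative term in $\eta_{n,j}$ converts the sup norm into a Lebesgue integral of a Brownian path, after which Gaussian concentration (Borell--TIS, or BDG combined with Chernoff) gives the needed decay because $\eta_{n,*}\to 0$ by Assumption \ref{Assumption Eta}.

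The principal obstacle is this last step: one must convert smallness of $\eta_{n,*}$ into a genuine exponential bound on \emph{sup-norm} differences of stochastic integrals that outpaces the $e^{|V_n|}$ scale of the LDP. The integration-by-parts trick made possible by the $T\norm{\frac{d}{dt}\mathfrak{c}^{n,j}}_T$ summand in $\eta_{n,j}$ is essential, since Gaussian concentration for the resulting Brownian integrals then supplies tails of the requisite strength. A secondary subtlety, handled in Step 2, is upgrading the iid LDP from the customary cylindrical topology to the stronger $\lambda$-weighted topology via exponential tightness; however this is routine once the sub-Gaussian estimates on $\norm{B^{n,j}}_T$ are combined with the summability of $(\lambda^j)$ established in Lemma \ref{Lemma Switch Inequality}.
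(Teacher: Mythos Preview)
Your overall strategy is the same as the paper's: represent $W^n$ as a moving average of independent Brownian motions (the paper's Lemma \ref{Lemma Identify Pi n Pi n B}), establish an LDP for the Brownian empirical measure in the $\lambda$-weighted topology (the paper's Theorem \ref{Theorem LDP PinB}), push it through a continuous linear map, and absorb the discrepancy between the finite-$n$ kernel $c^{n,k}$ and the limiting kernel $c^k$ by an exponential equivalence driven by $\eta_{n,*}\to 0$.

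There is, however, one genuine gap in your Step 3. You define $\mathfrak{Z}$ by stochastic integrals $\int_0^{\cdot} c^{j-k}(s)\,db^k_s$ and then claim that $\mathfrak{Z}$ is continuous on $\T^{\Z^d}_\lambda$. But a stochastic integral against $db^k$ is not a deterministic continuous function of the path $b^k$ in the sup norm; it is only defined Wiener-almost-surely, and the contraction principle needs an honest continuous map on the whole space. The paper fixes this by performing the integration by parts \emph{in the definition} of the map: its operator $\Gamma$ in \eqref{eq: Gamma Defn 3} is
\[
(\Gamma w)^j_t=\sum_{k\in\Z^d}\Bigl(c^k(t)\,w^{j-k}_t-\int_0^t w^{j-k}_s\,\tfrac{d}{ds}c^k(s)\,ds\Bigr),
\]
which is a genuine Lipschitz map $\T^{\Z^d}_\lambda\to\T^{\Z^d}_\lambda$ (Lemma \ref{Lemma Gamma Continuity}) and agrees with your $\mathfrak{Z}$ on Brownian inputs by It\^o's lemma. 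You correctly identify integration by parts as essential, but you invoke it only at the exponential-equivalence stage; it must also be used upstream to make the infinite-volume map pathwise well-defined and continuous, so that \cite[Corollary 4.2.21]{dembo-zeitouni:97} (which the paper uses in place of your Theorem 4.2.13) applies. With that correction the remainder of your sketch---reducing the L\'evy--Prokhorov distance to a volume-averaged sup-norm of the difference, bounding it by $\eta_{n,*}$ times Brownian sup-norms, and closing with a sub-Gaussian moment bound (the paper's Lemma \ref{Lemma bound B})---coincides with the paper's argument. A smaller caveat: the exponential tightness upgrade to the $\lambda$-topology that you call ``routine'' is in fact the most laborious part of the paper's proof (Proposition \ref{Proposition Exp Tightness Pi W} and Lemma \ref{Lemma Compact B K}), requiring an auxiliary family of weights $(\beta_m)$ rather than just summability of $(\lambda^j)$.
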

Before we prove this theorem, we make some more definitions and prove some introductory results. Define $\Gamma^n,\Gamma: \mathcal{T}^{\Z^d}_\lambda \to \mathcal{T}^{\Z^d}_\lambda$ as follows. For $w\in \T^{\Z^d}_\lambda$, $\Gamma^n(w) := Z^n$ and $\Gamma(w) := Y$, where for $t\in [0,T]$
\begin{align}
Z^{n,j}_t =&\; \sum_{k\in V_n}\left(c^{n,k}(t)w^{j-k}_t - \int_0^t w^{j-k}_s\frac{d}{ds}c^{n,k}(s)ds\right) \text{ for }j\in V_n, \label{eq: Gamma Defn 1}\\
Z^{n,j} =&\; Z^{n,j\modd V_n} \text{ for }j\notin V_n,\\
Y^j_t =&\; \sum_{k\in \Z^d}\left(c^{k}(t)w^{j-k}_t - \int_0^t w^{j-k}_s\frac{d}{ds}c^{k}(s)ds\right) \text{ for }j\in\Z^d.\label{eq: Gamma Defn 3}
\end{align}
Define $\Gamma^n_{\mathcal{P}},\Gamma_{\mathcal{P}}: \mathcal{P}(\mathcal{T}^{\Z^d}_\lambda) \to \mathcal{P}(\mathcal{T}^{\Z^d}_\lambda)$ by
\begin{align*}
\Gamma^n_{\mathcal{P}}(\mu) :=& \mu \circ (\Gamma^n)^{-1} \\
\Gamma_{\mathcal{P}}(\mu) :=& \mu \circ \Gamma^{-1}.
\end{align*}

\begin{lemma}\label{Lemma Gamma Continuity}
The maps $\Gamma,\Gamma^n : \T^{\Z^d}_\lambda \to \T^{\Z^d}_\lambda$, as well as the maps $\Gamma_{\mathcal{P}},\Gamma^n_{\mathcal{P}} : \T^{\Z^d}_\lambda \to \T^{\Z^d}_\lambda$, are well-defined and continuous.
\end{lemma}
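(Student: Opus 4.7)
The plan is to exploit the linearity of $\Gamma$ and $\Gamma^n$: continuity of a linear map reduces to a norm bound. I would first check well-definedness and continuity of $\Gamma$ via a uniform estimate, then handle $\Gamma^n$ by essentially the same argument (with the perturbation $\mathfrak{c}^{n,k}$ kept under control by $\eta_{n,*}$), and finally deduce continuity of the pushforwards $\Gamma_\mathcal{P},\Gamma^n_\mathcal{P}$ from continuity of $\Gamma,\Gamma^n$.

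For $\Gamma$: from \eqref{eq: Gamma Defn 3} and the triangle inequality,
\[
\norm{\Gamma(w)^j}_T \leq \sum_{k\in\Z^d}\bigl(\norm{c^k}_T + T\norm{\mathfrak{f}^k}_T\bigr)\norm{w^{j-k}}_T \leq (1+T)\sum_{k\in\Z^d}\kappa^k\norm{w^{j-k}}_T,
\]
where I have used \eqref{eq: ck inequality}. Squaring and applying Cauchy-Schwarz with weights $\kappa^k$ gives $\norm{\Gamma(w)^j}_T^2 \leq (1+T)^2\kappa_*\sum_{k}\kappa^k\norm{w^{j-k}}_T^2$. Multiplying by $\lambda^j$, summing over $j$, exchanging the order of summation, substituting $m=j-k$, and then using $\kappa^{-k'}=\kappa^{k'}$ from \eqref{eq kappa jk}, the inner sum becomes $\sum_{k'}\lambda^{m-k'}\kappa^{k'}$, which is bounded above by $2\kappa_*\lambda^m$ by Lemma \ref{Lemma Switch Inequality}. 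The net result is
\[
\norm{\Gamma(w)}_{T,\lambda}^2 \leq 2(1+T)^2\kappa_*^2\norm{w}_{T,\lambda}^2,
\]
which gives both well-definedness ($\Gamma(w)\in\T^{\Z^d}_\lambda$) and continuity of the linear map $\Gamma$.

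For $\Gamma^n$, the same computation applies with $c^{n,k}$ in place of $c^k$: writing $c^{n,k} = c^k + \mathfrak{c}^{n,k}$ for $k\in V_n$ shows $\norm{c^{n,k}}_T + T\norm{\tfrac{d}{dt}c^{n,k}}_T \leq (1+T)\kappa^k + \eta_{n,k}$, so the role of $\kappa^k$ above is taken by $(1+T)\kappa^k+\eta_{n,k}$, which is still summable by Assumption \ref{Assumption Eta}. Because $\Gamma^n(w)$ is $V_n$-periodic, I would express $\sum_{j\in\Z^d}\lambda^j\norm{Z^{n,j}}_T^2 = \sum_{j_0\in V_n} s^{n,j_0}\norm{Z^{n,j_0}}_T^2$ with $s^{n,j_0} := \sum_{j\equiv j_0\,(\mathrm{mod}\,V_n)}\lambda^j$ (noting $\sum_{j_0\in V_n}s^{n,j_0}=1$) and apply the Cauchy-Schwarz estimate above on each finite sum defining $Z^{n,j_0}$. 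Because $V_n$ is finite, every factor that arises is finite, yielding $\norm{\Gamma^n(w)}_{T,\lambda}\leq C_n\norm{w}_{T,\lambda}$ for each fixed $n$.

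Finally, continuity of the linear maps $\Gamma,\Gamma^n$ implies continuity of the pushforwards: for $\mu_k\to\mu$ weakly in $\mathcal{P}(\T^{\Z^d}_\lambda)$ and $f$ bounded continuous on $\T^{\Z^d}_\lambda$, the composition $f\circ\Gamma$ is bounded continuous, whence $\int f\,d(\mu_k\circ\Gamma^{-1}) = \int(f\circ\Gamma)\,d\mu_k \to \int(f\circ\Gamma)\,d\mu$; since weak convergence in $\mathcal{P}(\T^{\Z^d}_\lambda)$ coincides with convergence in $d^{\lambda,\mathcal{P}}$, the maps $\Gamma_\mathcal{P},\Gamma^n_\mathcal{P}$ are continuous. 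The main obstacle is the bookkeeping in the estimate for $\Gamma$: one has to combine \eqref{eq: ck inequality}, the symmetry $\kappa^{-k}=\kappa^k$, and Lemma \ref{Lemma Switch Inequality} in the right order; after that, $\Gamma^n$ and the pushforwards are routine.
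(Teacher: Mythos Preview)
Your argument is correct and for $\Gamma$ it is essentially identical to the paper's: both use \eqref{eq: ck inequality}, a Cauchy--Schwarz/Jensen step to square the convolution estimate, and Lemma \ref{Lemma Switch Inequality} to pass the $\kappa$ weights through $\lambda$, arriving at the same bound $\norm{\Gamma(w)-\Gamma(v)}_{T,\lambda}^2 \leq 2(1+T)^2\kappa_*^2\norm{w-v}_{T,\lambda}^2$ (you phrase it as a norm bound on $\Gamma(w)$, the paper as a Lipschitz bound; by linearity these are the same). The pushforward argument is also the same.

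The one place you differ is $\Gamma^n$: you redo the $\Gamma$ estimate with $c^{n,k}=c^k+\mathfrak{c}^{n,k}$ and then unwind the $V_n$-periodicity, which is correct but heavier than needed. The paper simply observes that $\Gamma^n(w)$ depends only on $(w^l)_{l\in V_{2n}}$ (since $j-k\in V_{2n}$ for $j,k\in V_n$), so $\Gamma^n$ factors through a finite-dimensional projection and continuity on $\T^{\Z^d}_\lambda$ is immediate. Your route has the advantage of giving a quantitative bound involving $\eta_{n,*}$, which is in the spirit of the later convergence arguments, but for the lemma as stated the paper's observation is shorter.
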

\begin{proof}
After one has noted that the support of $\Gamma^n$ lies in $\T^{V_{2n}}$, it is not too difficult to see that it is well-defined and continuous. The existence and continuity of $\Gamma_{\mathcal{P}}$ and $\Gamma^n_{\mathcal{P}}$ follows immediately from that of $\Gamma$ and $\Gamma^n$.

The existence and continuity of the map $\Gamma$ follows from the following consideration. Suppose that for $w,v\in\T^{\Z^d}_\lambda$,
\begin{align*}
Y^j_t =& \sum_{k\in \Z^d}\left(c^{k}(t)w^{j-k}_t - \int_0^t w^{j-k}_s\frac{d}{ds}c^{k}(s)ds\right)\\
X^j_t =& \sum_{k\in \Z^d}\left(c^{k}(t)v^{j-k}_t - \int_0^t v^{j-k}_s\frac{d}{ds}c^{k}(s)ds\right).
\end{align*}
Then, using \eqref{eq: ck inequality}, Jensen's Inequality, and recalling that $\kappa_* = \sum_{j\in \Z^d}\kappa^j$,
\begin{align*}
&\sum_{j\in\Z^d}\lambda^j\norm{Y^j - X^j}^2_t \\ \leq & \sum_{j,k\in \Z^d}\lambda^j\left(\norm{c^{k}}_t\norm{w^{j-k} - v^{j-k}}_t + \int_0^t \norm{w^{j-k} - v^{j-k}}_s\norm{\frac{d}{ds}c^{k}}_s ds\right)^2 \\
\leq&\sum_{j\in\Z^d}\lambda^j\big((1+t)\sum_{k\in\Z^d}\kappa^{-k}\norm{w^{j-k}-v^{j-k}}_t\big)^2\\
\leq & \kappa_*(1+t)^2\sum_{j,k\in \Z^d}\lambda^j\kappa^{-k}\norm{w^{j-k} - v^{j-k}}^2_t \\
\leq & 2\kappa_*^2(1+t)^2 \sum_{j\in\Z^d}\lambda^j\norm{w^j - v^j}^2_t,
\end{align*}
using \eqref{eq: lambda c inequality}. If we take $v^j = 0$ for all $j\in\Z^d$, then we see that $\Gamma(w) \in \T^{\Z^d}_\lambda$ is well-defined. The above identity also demonstrates that $\Gamma$ is Lipschitz.
\end{proof}
Let $(B^j)_{j\in\Z^d}$ be independent $\R$-valued Wiener Processes on $[0,T]$. Let $\Pi^n_{B}$ be the law of $\hat{\mu}^n(B) \in \mathcal{P}\big(\T^{\Z^d}_\lambda\big)$. 
\begin{theorem}\label{Theorem LDP PinB}
The laws $(\Pi^n_B)_{n\in\Z^+} \subset \mathcal{P}\big( \mathcal{P}(\T^{\Z^d}_\lambda)\big)$ satisfy an LDP with good rate function. This LDP is relative to the topology on $\mathcal{P}(\T^{\Z^d}_\lambda)$ generated by the Levy-Prokhorov Metric $d^{\lambda,\mathcal{P}}(\cdot,\cdot)$ defined in Section \ref{Subsection Preliminaries}.
\end{theorem}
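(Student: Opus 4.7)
My plan is to establish the LDP in two stages: first to obtain an LDP in the weaker cylindrical topology on $\mathcal{P}(\T^{\Z^d})$ by invoking a classical process-level LDP for i.i.d.\ random fields, and then to upgrade this to the finer Levy-Prokhorov topology on $\mathcal{P}(\T^{\Z^d}_\lambda)$ via an exponential tightness argument. This is in the spirit of the Baxter-Jain machinery already used for Theorem \ref{Theorem Main LDP}.

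\textbf{Step 1 (LDP in the cylindrical topology).} Since $(B^j)_{j\in V_n}$ are i.i.d.\ Wiener processes, the shift-empirical measure $\hat{\mu}^n(B)$ of the $V_n$-periodic extension is a standard object in process-level large deviations. An LDP for $\hat{\mu}^n(B)$ viewed as a random element of $\mathcal{P}(\T^{\Z^d})$ equipped with the cylindrical weak topology follows either from \cite[Theorem 4.9]{baxter-jain:93}, or via a projective-limit approach applying Sanov's theorem on $\T$ to each finite-dimensional projection $\pi^{V_m}$ and then concluding by Dawson-G\"artner. The $V_n$-periodization agrees with the truly infinite i.i.d.\ field outside a boundary layer of $O(|V_n|^{1-1/d})$ indices near $\partial V_n$; this discrepancy is negligible at the LDP scale $|V_n|$. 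The rate function is the Donsker-Varadhan specific entropy with respect to Wiener measure restricted to stationary $\mu$, and this is good in the cylindrical topology by the classical arguments.

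\textbf{Step 2 (Exponential tightness in the weighted topology).} To pass to the stronger topology on $\mathcal{P}(\T^{\Z^d}_\lambda)$, I would prove that $(\Pi^n_B)$ is exponentially tight, i.e.\ for every $\alpha > 0$ there is a compact $\mathcal{K}_\alpha \subset \mathcal{P}(\T^{\Z^d}_\lambda)$ with $\limsup_{n\to\infty}|V_n|^{-1}\log \PP(\hat{\mu}^n(B)\notin \mathcal{K}_\alpha) \leq -\alpha$. By Prokhorov this reduces to tightness in the weighted norm, for which a natural family of compact sets in $\T^{\Z^d}_\lambda$ is obtained by imposing $\sum_{j\in\Z^d}\lambda^j \norm{w^j}_T^2 \leq R$ together with a uniform modulus-of-continuity constraint on each coordinate $w^j$. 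By stationarity of $\hat{\mu}^n(B)$, the expected mass outside such a set is controlled in terms of $\Exp\bigl[\exp\bigl(\xi \sum_{j\in V_n}\lambda^j \norm{B^j}_T^2\bigr)\bigr]$ for small $\xi>0$; Fernique's theorem gives $\Exp[\exp(c\norm{B^0}_T^2)]<\infty$ for some $c>0$, and together with independence and $\sum_j\lambda^j=1$ this yields the required superexponential tail bound. The coordinate-wise moduli-of-continuity constraints are handled similarly via Borell-TIS.

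\textbf{Step 3 (Combination).} Given Steps 1 and 2, the LDP on $\mathcal{P}(\T^{\Z^d}_\lambda)$ follows from the inverse contraction principle \cite[Corollary 4.2.6]{dembo-zeitouni:97} applied to the continuous injection $\mathcal{P}(\T^{\Z^d}_\lambda)\hookrightarrow \mathcal{P}(\T^{\Z^d})$: exponential tightness in the source combined with the LDP in the target yields the LDP in the source, with good rate function obtained by pull-back. The main obstacle I expect lies in Step 2: the weights $\lambda^j$ decay only inverse-polynomially, being Fourier coefficients of the bounded function $\tilde\lambda$, so one cannot naively separate Gaussian tail control from summability. The argument must exploit the normalization $\sum_j \lambda^j = 1$ to view $\sum_j \lambda^j \norm{B^j}_T^2$ as a convex combination of i.i.d.\ copies, so that Jensen's inequality together with Fernique keeps the exponential moment bounded uniformly in $n$. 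Once this quantitative bound is in place, the remainder of the construction is routine.
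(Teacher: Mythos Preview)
Your three-step architecture --- cylindrical LDP, exponential tightness in the $\lambda$-topology, then inverse contraction --- is exactly the paper's route; Steps 1 and 3 match the paper (which invokes \cite{deuschel-stroock-etal:91} for the i.i.d.\ process-level LDP and \cite[Theorem 4.2.4]{dembo-zeitouni:97} for the lift). The problem is in Step 2, where your construction of compact sets and the accompanying probability estimate do not close.

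Here is the tension. If your ``uniform modulus-of-continuity constraint on each coordinate $w^j$'' means the \emph{same} modulus for every $j$, then each $w^j$ lies in a fixed Arzel\`a--Ascoli ball, so $\|w^j\|_T\leq M$ uniformly, and together with $\sum_j\lambda^j=1$ you get the tail decay $\sum_{j\notin V_m}\lambda^j\|w^j\|_T^2\leq M^2\sum_{j\notin V_m}\lambda^j\to 0$ needed for compactness in $\T^{\Z^d}_\lambda$. But then the event $\{S^l\tilde B\notin K\}$ does not depend on $l$: it occurs iff \emph{some} $B^j$ with $j\in V_n$ violates the modulus bound. Hence $\hat{\mu}^n(B)(K^c)\in\{0,1\}$, and $\PP(\hat{\mu}^n(B)(K^c)>\delta)=\PP(\exists\, j\in V_n:B^j\text{ bad})$, which grows rather than decaying like $e^{-\alpha|V_n|}$. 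If instead you let the modulus loosen with $|j|$, you lose the uniform tail decay and the set is no longer compact. Separately, the exponential moment you write down, $\Exp\bigl[\exp\bigl(\xi\sum_{j\in V_n}\lambda^j\|B^j\|_T^2\bigr)\bigr]$, is on the wrong scale: it is $O(1)$ in $n$, not $e^{O(|V_n|)}$, since the weights sum to at most one; it cannot drive a bound of order $e^{-\alpha|V_n|}$.

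The paper resolves this by splitting the burden. Equicontinuity is not imposed by hand: it is \emph{borrowed} from the $\tau_W$-exponential tightness that already follows from Step 1. For the tail, the paper introduces auxiliary weights $(\beta_m^j)$ with $\beta_m^j=\xi(m)\lambda^j$ for $j\notin V_m$ and $\xi(m)\to\infty$, sets $\mathcal{Y}^r=\bigcap_m\{w:\|w\|_{T,\beta_m}^2\leq r\}$, and shows that the intersection of a $\tau_W$-compact set with $\mathcal{Y}^r$ is $\tau_\lambda$-compact (the $\beta_m$-constraints force uniform tail smallness). The key quantitative step is the periodicity identity
\[
\sum_{j\in V_n}\|S^j\tilde B\|_{T,\beta_m}^2=\sum_{l\in V_n}\|B^l\|_T^2,
\]
which is \emph{independent of $m$}; this reduces $\PP(\hat{\mu}^n(B)(\mathcal{Y}^{\gamma_i})<1-\delta_i)$ to a Chebyshev/Fernique bound on the \emph{unweighted} sum $\sum_{j\in V_n}\|B^j\|_T^2$, which is genuinely on the scale $|V_n|$. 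This identity --- turning a family of weighted norms back into a single unweighted i.i.d.\ sum --- is precisely what your ``Jensen plus Fernique'' sketch is missing.
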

This theorem is proved in Section \ref{Section Proof of Last Theorem} below.
\begin{lemma}\label{Lemma Identify Pi n Pi n B}
\begin{equation*}
\Pi^n_W = \Pi^n_B\circ(\Gamma_{\mathcal{P}}^n)^{-1}.
\end{equation*}
\end{lemma}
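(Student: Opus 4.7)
The plan is to identify $\Gamma^n(\tilde B^n)$ (with $\tilde B^n$ denoting the $V_n$-periodic extension of $(B^j)_{j\in V_n}$) as a version of $\tilde W^n$, and then to transport this identification through the empirical-measure map. Since $\hat\mu^n(B) = |V_n|^{-1}\sum_{j\in V_n}\delta_{S^j\tilde B^n}$ is supported on shifts of $\tilde B^n$, and a direct inspection of \eqref{eq: Gamma Defn 1} shows that $\Gamma^n$ commutes with lattice shifts on $V_n$-periodic arguments, $\Gamma^n(S^j\tilde B^n) = S^j\Gamma^n(\tilde B^n)$ for $j\in V_n$, I obtain
\begin{equation*}
\hat\mu^n(B)\circ(\Gamma^n)^{-1} = \frac{1}{|V_n|}\sum_{j\in V_n}\delta_{S^j\Gamma^n(\tilde B^n)} = \hat\mu^n\bigl(\Gamma^n(\tilde B^n)\bigr).
\end{equation*}
The problem therefore reduces to showing that $(\Gamma^n(\tilde B^n)^j)_{j\in V_n}$ has the same law as $W^n$.

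Both processes are continuous, centred and Gaussian, so it suffices to match the covariance kernel. Applying deterministic integration by parts to \eqref{eq: Gamma Defn 1} and using $\tilde B^{n,j-k} = B^{(j-k)\modd V_n}$, I obtain for $j\in V_n$ the It\^{o} representation
\begin{equation*}
\Gamma^n(\tilde B^n)^j_t = \sum_{k\in V_n}\int_0^t c^{n,k}(s)\,dB^{(j-k)\modd V_n}_s.
\end{equation*}
For $j,l\in V_n$ and $s\le t$, independence of the $B^m$ together with It\^{o} isometry then give
\begin{equation*}
\Exp\bigl[\Gamma^n(\tilde B^n)^j_s\,\Gamma^n(\tilde B^n)^l_t\bigr] = \int_0^s\sum_{k\in V_n}c^{n,k}(r)\,c^{n,(k+l-j)\modd V_n}(r)\,dr,
\end{equation*}
since for each $k\in V_n$ the unique $m\in V_n$ with $(j-k)\equiv(l-m)\bmod V_n$ is $m=(l-j+k)\modd V_n$.

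The decisive step is to identify this cyclic autocorrelation of $c^{n,\cdot}$ with $a^{(l-j)\modd V_n}$, i.e.\ with exactly the covariance kernel of $W^n$ prescribed in \eqref{eq: W expectation definition}. Using the inverse-DFT representation $c^{n,j} = |V_n|^{-1}\sum_{k\in V_n}\exp\bigl(2\pi i\langle j,k\rangle/(2n+1)\bigr)\tilde c^{n,k}$, the orthogonality of characters on the discrete torus $V_n$, and the evenness $\tilde c^{n,-k}=\tilde c^{n,k}$ from Assumption \ref{Assumption tilde a}, the sum collapses to the inverse DFT of $(\tilde c^{n,k})^2 = \tilde a^{n,k}$ at lag $(l-j)\modd V_n$, which by construction equals $a^{(l-j)\modd V_n}(r)$. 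I expect this Fourier manipulation to be the main obstacle: two index conventions must be tracked in parallel --- ordinary $\Z^d$ arithmetic inside the stochastic integrals, cyclic $V_n$ arithmetic for Fourier inversion --- and the evenness hypothesis must be invoked at precisely the right step. Once the covariance match is secured, equality in law of centred Gaussian processes is automatic, and every remaining ingredient (shift equivariance of $\Gamma^n$, integration by parts, It\^{o} isometry) is routine.
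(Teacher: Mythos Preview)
Your proposal is correct and follows essentially the same approach as the paper: establish shift equivariance of $\Gamma^n$ on $V_n$-periodic inputs, reduce to matching the laws of $(\Gamma^n(\tilde B)^j)_{j\in V_n}$ and $W^n$, use Gaussianity to reduce to the covariance, pass to the It\^o representation, and identify the cyclic autocorrelation of $c^{n,\cdot}$ with $a^{(\cdot)}$ via discrete Fourier analysis. The only cosmetic differences are that the paper first invokes the martingale property and shift invariance to reduce to $\Exp[Z^{n,0}_tZ^{n,m}_t]$ before computing, and phrases the Fourier step as the convolution formula rather than character orthogonality; these are equivalent.
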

\begin{proof}
We fix $n$ throughout this proof. Let $\tilde{B} \in \T^{\Z^d}_\lambda$ be the $V_n$-periodic interpolant of $(B^j)_{j\in V_n}$, i.e. such that for all $k\in \Z^d$, $\tilde{B}^k := B^{k\modd V_n}$. 

\vspace{0.2cm}
\textit{Claim: The law of $\frac{1}{|V_n|}\sum_{j\in V_n}\delta_{S^j (\Gamma^n(\tilde{B}))}$ is $\Pi^n_B\circ(\Gamma_{\mathcal{P}}^n)^{-1}$.}
\vspace{0.2cm}

Write $Z^{n} = \Gamma^n(\tilde{B})$ and notice that for all $j\in \Z^d$
\begin{equation}\label{eq: Zn defn 1}
Z^{n,j}_t = \sum_{k\in V_n}\left(c^{n,k}(t)B^{(j-k)\modd V_n}_t - \int_0^t B^{(j-k)\modd V_n}_s\frac{d}{ds}c^{n,k}(s)ds\right).
\end{equation}
We thus observe that for $j\in V_n$, 
\begin{equation}\label{eq B shift invariance}
S^j \Gamma^n\big(\tilde{B}\big) = \Gamma^n\big(S^j \tilde{B}\big). 
\end{equation}
The claim follows from this observation.

Since $Z^n$ and $W^n$ are $V_n$-periodic, it thus suffices to show that $\big(Z^{n,j}\big)_{j\in V_n}$ has the same law as $\big(W^{n,j}\big)_{j\in V_n}$. Now $\big(Z^{n,j}\big)_{j\in V_n}$ and $W^n$ are both Gaussian, and therefore we merely need to show that the mean and variance are the same. Both of these processes have zero mean. An application of Ito's Lemma yields that $\PP$-almost-surely,
\begin{equation}\label{eq: bound Wjt}
Z^{n,j}_t = \sum_{k\in V_n} \int_0^t c^{n,k}(s) dB^{(j-k)\modd V_n}_s.
\end{equation}
We observe that the covariances are invariant under shifts modulo $V_n$, i.e. for $j,k,l \in V_n$ and $s,t \in [0,T]$,
\begin{align*}
\mathbb{E}\left[ Z^{n,j}_s Z^{n,k}_t\right] &= \mathbb{E}\left[ Z^{n,(j+l)\modd V_n}_s Z^{n,(k+l)\modd V_n}_t\right] \\
\mathbb{E}\left[ W^{n,j}_s W^{n,k}_t\right] &= \mathbb{E}\left[ W^{n,(j+l)\modd V_n}_s W^{n,(k+l)\modd V_n}_t\right]. 
\end{align*}
Thus, it suffices for us to show that for all $m\in V_n$ and $t,u\in [0,T]$, 
\begin{align*}
\mathbb{E}\left[ W^{n,0}_t W^{n,m}_u\right]  = \mathbb{E}\left[ Z^{n,0}_t Z^{n,m}_u\right]. 
\end{align*}
We note also that $(Z^{n,j})_{j\in V_n}$ and $W^n$ are both martingales. This means that, we only need to verify the above expression in the case that $t=u$. In sum, our remaining task is to prove that for all $m\in V_n$ and $t\in [0,T]$,
\begin{align*}
\mathbb{E}\left[ W^{n,0}_t W^{n,m}_t\right] = \sum_{k,l\in V_n}\mathbb{E}\left[  \int_0^t c^{n,k}(s) dB^{-k}_s \int_0^t c^{n,l}(r) dB^{(m-l)\modd V_n}_r\right].
\end{align*}
Now $\mathbb{E}\left[  \int_0^t c^{n,k}(s) dB^{-k}_s \int_0^t c^{n,l}(r) dB^{m-l}_r\right]$ is nonzero if and only if $-k \modd V_n = (m-l) \modd V_n$. Furthermore $-k \modd V_n = (m-l) \modd V_n$ if and only if $l \modd V_n = (m+k)\modd V_n$. We thus see, using the Ito Isometry, that
\begin{align*}
\mathbb{E}\left[ Z^{n,0}_t Z^{n,m}_t\right] =& \sum_{k\in V_n} \int_0^t c^{n,k}(s)c^{n,(k+m)\modd V_n}(s)ds\\
=&\int_0^t  \sum_{k\in V_n}c^{n,-k}(s)c^{n,(k+m)\modd V_n}(s)ds,
\end{align*}
since $c^{n,k}(s) = c^{n,-k}(s)$. It follows from the convolution formula for the discrete Fourier Transform that $\sum_{k\in V_n}c^{n,-k}(s)c^{n,(k+m)\modd V_n}(s)$ is the $m^{th}$ discrete Fourier coefficient of $\big((\tilde{c}^{n,k}(s))^2\big)_{k\in V_n}$. That is, $\sum_{k\in V_n}c^{n,-k}(s)c^{n,(k+m)\modd V_n}(s) = a^m(s)$. In light of \eqref{eq: W expectation definition}, we thus see that
\begin{equation*}
\mathbb{E}\left[ Z^{n,0}_t Z^{n,m}_t\right] = \mathbb{E}\left[ W^{n,0}_t W^{n,m}_t\right],
\end{equation*}
as required.
\end{proof}
The following proof uses some ideas from \cite[Theorem 4.9]{baxter-jain:93}.
\begin{proof}[Proof of Theorem \ref{Theorem PinW}]
From Lemma \ref{Lemma Identify Pi n Pi n B}, $\Pi^n_W$ is the law of $\hat{\mu}^n(B)\circ(\Gamma^n)^{-1}$. Therefore from \cite[Corollary 4.2.21]{dembo-zeitouni:97}, and the fact that the maps $\mu \to \mu\circ(\Gamma^n)^{-1}$ and $\mu \to \mu\circ\Gamma^{-1}$ are continuous on $\mathcal{P}(\T_\lambda^{\Z^d})$ (thanks to Lemma \ref{Lemma Gamma Continuity}), it suffices to prove that for any $\delta > 0$,
\begin{equation}\label{eq to prove Dec}
\lim_{n\to\infty}\frac{1}{|V_n|}\log\PP\bigg(d^{\lambda,\mathcal{P}}\bigg(\hat{\mu}^n(B)\circ(\Gamma^n)^{-1},\hat{\mu}^n(B)\circ \Gamma^{-1}\bigg) > \delta\bigg) = -\infty.
\end{equation}
Let $\tilde{B} \in \T^{\Z^d}_\lambda$ be the $V_n$-periodic interpolation of $(B^j)_{j\in V_n}$ - i.e. such that $\tilde{B}^k := B^{k\modd V_n}$ for all $k\in \Z^d$. Note that $\tilde{B}$ clearly depends on $n$. Let $X^n = \Gamma(\tilde{B}) - \Gamma^n(\tilde{B})$.
Then, very similarly to the proof of Theorem \ref{Theorem Main LDP},
\begin{align*}
d^{\lambda,\mathcal{P}}\bigg(\hat{\mu}^n(B)\circ(\Gamma^n)^{-1},\hat{\mu}^n(B)\circ \Gamma^{-1}\bigg) \leq \max\left\lbrace \delta, \frac{1}{|V_n|} \#\lbrace j\in V_n: \norm{S^j X^n}_{T,\lambda} > \delta\rbrace \right\rbrace \\
=  \max\left\lbrace \delta,\frac{1}{|V_n|} \#\left\lbrace j\in V_n: \norm{S^j X^n}_{T,\lambda}^2 > \delta^2\right\rbrace \right\rbrace.
\end{align*}
Hence
\begin{align*}
\PP\bigg(d^{\lambda,\mathcal{P}}\bigg(\hat{\mu}^n(B)\circ(\Gamma^n)^{-1}&,\hat{\mu}^n(B)\circ \Gamma^{-1}\bigg) > \delta\bigg) \\ 
&\leq \PP\bigg( \frac{1}{|V_n|} \#\lbrace j\in V_n: \norm{S^j X^n}_{T,\lambda}^2 > \delta^2\rbrace > \delta \bigg) \\
&\leq \PP\bigg( \frac{1}{|V_n|} \sum_{j\in V_n} \norm{S^j X^n}_{T,\lambda}^2 > \delta^3\bigg)\\
&= \PP\bigg( \frac{1}{|V_n|}\sum_{j\in V_n, k\in \Z^d}\lambda^k \norm{X^{n,(j+k)\modd V_n}}^2_{T} > \delta^3\bigg),
\end{align*}
since $X^{n,m} = X^{n,m\modd V_n}$ for all $m\in \Z^d$. Now we claim that
\begin{equation}
\sum_{j\in V_n,k\in\Z^d}\lambda^k \norm{X^{n,(j+k)\modd V_n}}^2_{T} = \sum_{l\in V_n} \norm{X^{n,l}}^2_T.
\end{equation}
This is because for any $l\in V_n$ and $k\in\Z^d$, there exists a unique $j\in V_n$ such that $(j+k)\modd V_n = l$. Hence the coefficient of $ \norm{X^{n,l}}^2_T$ on the right is $\sum_{k\in\Z^d}\lambda^k = 1$.
Thus, making use of the previous two results, 
\begin{align}
\PP\bigg(&d^{\lambda,\mathcal{P}}\bigg(\hat{\mu}^n(B)\circ(\Gamma^n)^{-1},\hat{\mu}^n(B)\circ \Gamma^{-1}\bigg) > \delta\bigg)\nonumber\\
&\leq \Prob\left( \sum_{j\in V_n}\norm{X^{n,j}}^2_{T} > |V_n| \delta^3\right) \nonumber\\
&\leq \exp\left( -b |V_n|\delta^3\right)\Exp\bigg[\exp\bigg(b\sum_{j\in V_n} \norm{X^{n,j}}^2_{T}\bigg)\bigg]\label{eq Dec bnd 1}
\end{align}
for some $b>0$, through Chebyshev's Inequality. 

We thus see that, after noting the definition of $\Gamma^n$ and $\Gamma$ in \eqref{eq: Gamma Defn 1}-\eqref{eq: Gamma Defn 3}, and the definition $\mathfrak{c}^{n,k}$ in \eqref{defn cnk 1}-\eqref{defn cnk 2} that for any $j\in \Z^d$,
\begin{align*}
X^{n,j}_t =& \sum_{k\in \Z^d}\left(\mathfrak{c}^{n,k}(t)B^{(j-k)\modd V_n}_t - \int_0^t B^{(j-k)\modd V_n}_s\frac{d}{ds}\mathfrak{c}^{n,k}(s)ds\right),\\
\norm{X^{n,j}}_T \leq & \sum_{k \in \Z^d}\eta_{n,k} \norm{B^{(j-k)\modd V_n}}_T\\
\norm{X^{n,j}}_T^2 \leq & \eta_{n,*}\sum_{k\in \Z^d}\eta_{n,k}\norm{B^{(j-k)\modd V_n}}_T^2,
\end{align*}
where this last step follows by the Cauchy-Schwarz Inequality, $\eta_{n,k}$ is defined in \eqref{defn etan} and $\eta_{n,*} = \sum_{m \in \Z^d}\eta_{n,m}$. Thus, for some positive constant $b$, assuming for the moment that the following integrals are well-defined, we find that
\begin{equation*}
\Exp\bigg[\exp\big(b\sum_{j\in V_n} \norm{X^{n,j}}^2_{T}\big)\bigg] \leq \Exp\bigg[\exp\bigg(b\eta_{n,*}\sum_{j\in V_n,k\in\Z^d}\eta_{n,k}\norm{B^{(j-k)\modd V_n}}_T^2\bigg)\bigg].
\end{equation*}
We claim that 
\begin{equation*}
\sum_{j\in V_n,k\in\Z^d}\eta_{n,k}\norm{B^{(j-k)\modd V_n}}_T^2 = \sum_{k\in\Z^d}\eta_{n,k} \sum_{l\in V_n}\norm{B^l}_T^2 = \eta_{n,*}\sum_{l\in V_n}\norm{B^l}_T^2 .
\end{equation*}
This is because for each $k\in \Z^d$ and $l\in V_n$, there is a unique $j\in V_n$ such that $(j-k)\modd V_n = l$. This is given by $j=(k+l)\modd V_n$. The above two results imply that\begin{equation*}
\Exp\bigg[\exp\big(b\sum_{j\in V_n} \norm{X^{n,j}}^2_{T}\big)\bigg]  \leq \Exp\bigg[\exp\bigg(b\eta_{n,*}^2\sum_{m\in V_n}\norm{B^{m}}_T^2\bigg)\bigg].
\end{equation*}
Observe that the coefficient of each $\norm{B^m}^2_T$ is less than or equal to $b\eta_{n,*}^2$, which goes to zero as $n\to \infty$ by Assumption \ref{Assumption Eta}. Hence for $n$ large enough, by Lemma \ref{Lemma bound B} the previous expectation is finite, and satisfies the bound
\begin{equation}
\Exp\bigg[\exp\bigg(b\eta_{n,*}^2\sum_{m\in V_n}\norm{B^{m}}_T^2\bigg)\bigg]\leq \exp\big(|V_n|\grave{C} b\eta_{n,*}^2\big).\label{eq Dec bnd 2}
\end{equation}
Combining \eqref{eq Dec bnd 1} and \eqref{eq Dec bnd 2}, since $\eta_{n,*}\to 0$ as $n\to \infty$, we see that
\begin{equation*}
\lim_{n\to\infty}\frac{1}{|V_n|}\log\PP\bigg(d^{\lambda,\mathcal{P}}\bigg(\hat{\mu}^n(B)\circ(\Gamma^n)^{-1},\hat{\mu}^n(B)\circ \Gamma^{-1}\bigg) > \delta\bigg) 
\leq -b\delta^3.
\end{equation*}
We take $b\to \infty$ to obtain \eqref{eq to prove Dec}. 
\end{proof}
\begin{lemma}\label{Lemma bound B}
There exists a constant $\grave{C}$ such that for all $a\in [0,\frac{1}{4T}]$,
\[
\Exp\left[\exp\big(a\norm{B^j}_T^2\big)\right] \leq 1 + \grave{C} a \leq \exp\big(\grave{C} a\big).
\]
\end{lemma}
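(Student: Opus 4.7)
The plan is to control $\|B^j\|_T := \sup_{r \in [0,T]}|B^j_r|$ using a Gaussian tail bound obtained from the reflection principle, then translate this into a bound on the exponential moment via the layer-cake representation.

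First I would record the tail estimate. Applying the reflection principle to both $B^j$ and $-B^j$ (each is a standard Brownian motion) gives $\PP\bigl(\sup_{r\in[0,T]} B^j_r \geq x\bigr) = 2\PP(B^j_T \geq x)$ for $x \geq 0$, and similarly for $-B^j$. A union bound then yields
\[
\PP\bigl(\|B^j\|_T \geq x\bigr) \leq 4\, \PP(B^j_T \geq x) \leq 4\exp\!\bigl(-x^2/(2T)\bigr),
\]
where the last step uses the standard Gaussian tail bound.

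Next I would use the layer-cake identity
\[
\Exp\!\left[\exp\bigl(a\|B^j\|_T^2\bigr)\right] = 1 + \int_0^\infty 2ax\,\exp(ax^2)\,\PP\bigl(\|B^j\|_T > x\bigr)\,dx,
\]
which follows from a change of variables in $\Exp[\exp(a\|B^j\|_T^2)] = \int_0^\infty \PP(\exp(a\|B^j\|_T^2) > u)\,du$. Inserting the tail bound above gives
\[
\Exp\!\left[\exp\bigl(a\|B^j\|_T^2\bigr)\right] \leq 1 + 8a\int_0^\infty x\,\exp\bigl((a - 1/(2T))x^2\bigr)dx.
\]
For $a \in [0,\tfrac{1}{4T}]$ the exponent is dominated by $-x^2/(4T)$, so the integral is bounded by $\int_0^\infty x\,\exp(-x^2/(4T))\,dx = 2T$. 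This produces $\Exp[\exp(a\|B^j\|_T^2)] \leq 1 + 16aT$, so $\grave{C} := 16T$ works for the first inequality.

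The second inequality $1 + \grave{C}a \leq \exp(\grave{C}a)$ is immediate from the elementary bound $1 + y \leq e^y$ applied with $y = \grave{C}a \geq 0$. No significant obstacle arises here; the main ingredient is the reflection principle, and the restriction $a \leq 1/(4T)$ is exactly what is needed so that the Gaussian tail dominates the exponential weight and the integral converges with a clean estimate.
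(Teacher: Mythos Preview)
Your proof is correct. Both arguments start from the reflection principle, but from there they diverge. The paper bounds the distribution of $\|B^j\|_T$ by a density, namely $4(2\pi T)^{-1/2}\exp(-x^2/(2T))$ shifted to start at the point $L$ chosen so that this density integrates to $1$; it then integrates $\exp(ax^2)$ against this density, changes variables to express the result as $h(a)=(1-2aT)^{-1/2}\bigl(1-\mathrm{erf}(\bar L)\bigr)$, observes $h(0)=1$, and applies the Mean Value Theorem on $[0,1/(4T)]$ to obtain $h(a)\le 1+\grave C a$ with a nonexplicit constant. Your route via the layer-cake identity and the crude Gaussian tail $\PP(B_T\ge x)\le\exp(-x^2/(2T))$ is shorter, avoids the somewhat delicate construction of the majorising density, and delivers the explicit value $\grave C=16T$. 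The paper's approach has the (minor) advantage of tracking the exact Gaussian tail rather than its Chernoff bound, so in principle it could yield a sharper constant, but since only existence of $\grave C$ is claimed, your argument is the more economical one.
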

\begin{proof}
Now the reflection principle \cite[Section 2.6]{karatzas-shreve:91} dictates that, for any $b\geq 0$, $\Prob(\sup_{t\in [0,T]}B^j_t \geq b) = \int_b^\infty 2(2\pi T)^{-\frac{1}{2}}\exp\big(-\frac{x^2}{2T}\big)dx$. Hence, since $(-B_t)_{t\in [0,T]}$ has the same law as $(B_t)_{t\in [0,T]}$,
\begin{equation}
\Prob(\norm{B^j}_T \geq b) \leq \int_b^\infty 4(2\pi T)^{-\frac{1}{2}}\exp\bigg(-\frac{x^2}{2T}\bigg)dx.
\end{equation}
We obtain an upper bound for the expectation of $\exp\big(a\norm{B^j}_T^2)$ by assuming that the above density assumes its maximal value for all $b \geq L$, where
$L > 0$ is chosen to ensure that the integral of the density upper bound is one. That is, we find that
\begin{align}
\Exp\left[\exp\big(a\norm{B^j}_T^2\big)\right] \leq \int_L^\infty 4(2\pi T)^{-\frac{1}{2}}\exp\bigg(-\frac{x^2}{2T} + ax^2\bigg)dx.\label{eq exp tight 13}
\end{align}
The limit $L$ is defined such that
\[
\int_L^\infty 4(2\pi T)^{-\frac{1}{2}}\exp\bigg(-\frac{x^2}{2T}\bigg)dx = 1.
\]
Note that the above is equivalent to requiring that $\erf\big(L (2T)^{-\frac{1}{2}}\big) = \frac{1}{2}$, where $\erf(x) := 2\pi^{-\frac{1}{2}}\int_0^x \exp(-t^2)dt$. Now through a change of variable, we see that if $\bar{L} = L\sqrt{(2T)^{-1} - a}$, then
\begin{align}
\int_L^\infty 4(2\pi T)^{-\frac{1}{2}}\exp\big(-\frac{x^2}{2T} + ax^2\big)dx &= (1-2aT)^{-\frac{1}{2}}\int_{\bar{L}}^\infty \frac{2}{\sqrt{\pi}}\exp(-y^2)dy \\
&= h(a) ,
\end{align}
where $h(a) := (1-2aT)^{-\frac{1}{2}}\big( 1 - \erf(\bar{L})\big)$ (note the dependence of $\bar{L}$ on $a$). Now for $a\in [0,\frac{1}{4T}]$, the function $h(a)$ is differentiable, with $|h'(a)|$ uniformly bounded on this interval. Hence by the Mean Value Theorem, since $h(0) = 1$, there exists a constant $\grave{C}$ such that for all $a\in [0,\frac{1}{4T}]$,
\begin{equation}
\int_L^\infty 4(2\pi T)^{-\frac{1}{2}}\exp\bigg(-\frac{x^2}{2T} + ax^2\bigg)dx \leq 1 + \grave{C}a.
\end{equation}
We may finally note that $1+\grave{C}a \leq \exp\big(\grave{C}a\big)$ as a consequence of Taylor's Theorem. This gives us the lemma.
\end{proof}
The following lemma is needed in order that Assumption \ref{Assumption Noise LDP} is satisfied.
\begin{lemma}\label{exp bound Wn}
\begin{equation}\label{eq: a limit}
\lim_{a\to\infty}\lsup{n}\frac{1}{|V_n|}\log\Prob\left(\sum_{j\in V_n}\norm{W^{n,j}}_T > a |V_n|\right) = -\infty.
\end{equation}
\end{lemma}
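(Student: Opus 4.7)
The plan is to reduce the statement to a tail estimate for a sum of independent Brownian suprema by using the Gaussian representation already established in Lemma \ref{Lemma Identify Pi n Pi n B}. That lemma identifies the joint law of $(W^{n,j})_{j\in V_n}$ with the joint law of $(Z^{n,j})_{j\in V_n}$ given in \eqref{eq: Zn defn 1}; since the event in question depends only on this joint law, I would work with the $Z^{n,j}$ throughout.

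Reading the explicit formula for $Z^{n,j}_t$, the triangle inequality gives the pathwise bound
\[
\norm{Z^{n,j}}_T \leq \sum_{k\in V_n}\gamma_{n,k}\norm{B^{(j-k)\modd V_n}}_T,\qquad \gamma_{n,k}:=\norm{c^{n,k}}_T + T\Big\|\tfrac{d}{dt}c^{n,k}\Big\|_T.
\]
Summing over $j\in V_n$ and noting that for fixed $k\in V_n$ the map $j\mapsto (j-k)\modd V_n$ is a bijection of $V_n$ onto itself, I would conclude
\[
\sum_{j\in V_n}\norm{Z^{n,j}}_T \leq G_n\sum_{l\in V_n}\norm{B^l}_T,\qquad G_n:=\sum_{k\in V_n}\gamma_{n,k}.
\]
Writing $c^{n,k}=c^k+\mathfrak{c}^{n,k}$ for $k\in V_n$ and applying the triangle inequality yields $G_n\leq \eta_{n,*}+\sum_{k\in\Z^d}\big(\norm{c^k}_T + T\norm{\mathfrak{f}^k}_T\big)$, which is uniformly bounded in $n$ by some constant $G_*<\infty$ thanks to Assumption \ref{Assumption Eta}.

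This reduces the problem to estimating $\Prob\big(\sum_{l\in V_n}\norm{B^l}_T > (a/G_*)|V_n|\big)$, which I would handle by an exponential Chebyshev bound. Since the $(B^l)_{l\in V_n}$ are independent, for any $b>0$
\[
\Prob\!\left(\sum_{l\in V_n}\norm{B^l}_T > \tfrac{a}{G_*}|V_n|\right) \leq \exp\!\left(-\tfrac{ab}{G_*}|V_n|\right)M(b)^{|V_n|},\qquad M(b):=\Exp\!\left[\exp(b\norm{B^0}_T)\right].
\]
The finiteness of $M(b)$ for every $b>0$ is standard; one can either argue directly from the reflection-principle tail bound $\Prob(\norm{B^0}_T>x)\leq C\exp(-x^2/(2T))$, or equivalently combine Lemma \ref{Lemma bound B} with the elementary inequality $bx\leq \epsilon x^2 + b^2/(4\epsilon)$ taking $\epsilon=1/(4T)$. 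Taking logarithms, dividing by $|V_n|$, and then letting $a\to\infty$ with $b$ fixed produces
\[
\lim_{a\to\infty}\lsup{n}\frac{1}{|V_n|}\log\Prob\!\left(\sum_{j\in V_n}\norm{W^{n,j}}_T > a|V_n|\right) \leq \lim_{a\to\infty}\!\left(-\tfrac{ab}{G_*}+\log M(b)\right) = -\infty.
\]

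The only non-routine step is verifying the uniform bound $\sup_n G_n<\infty$; once this is secured, the remainder is the standard Cramér-style estimate for an i.i.d.\ sum of sub-Gaussian random variables. I anticipate no serious obstacle since Assumption \ref{Assumption Eta} was tailored precisely to make this bookkeeping work.
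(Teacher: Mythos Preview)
Your proof is correct and follows essentially the same blueprint as the paper's. Both arguments identify $W^n$ with $Z^n$ via Lemma~\ref{Lemma Identify Pi n Pi n B}, both use the pathwise bound $\norm{Z^{n,j}}_T \leq \sum_{k\in V_n}\gamma_{n,k}\norm{B^{(j-k)\modd V_n}}_T$, and both rely on the uniform bound $\sup_n \sum_{k\in V_n}\gamma_{n,k}<\infty$ (your $G_*$ is precisely the paper's constant $K$, and your justification via $c^{n,k}=c^k+\mathfrak{c}^{n,k}$ and Assumption~\ref{Assumption Eta} is the same as the paper's).

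The only difference is in the final exponential Chebyshev step. The paper first squares, using Jensen to obtain $\sum_{j\in V_n}\norm{W^{n,j}}_T^2 \leq K^2\sum_{l\in V_n}\norm{B^l}_T^2$, and then applies Lemma~\ref{Lemma bound B} to control $\Exp\big[\exp\big(\breve{C}\sum_j\norm{W^{n,j}}_T^2\big)\big]$. You instead sum linearly to get $\sum_{j}\norm{W^{n,j}}_T \leq G_*\sum_l\norm{B^l}_T$ and apply the Chernoff bound directly to the i.i.d.\ sum of $\norm{B^l}_T$ using the finiteness of $M(b)=\Exp[\exp(b\norm{B^0}_T)]$. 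Your route is marginally cleaner, avoiding the squaring detour; the paper's route has the side benefit that the quadratic exponential moment bound it establishes in \eqref{eq claim 1 to establish} is of the same type used elsewhere in the section.
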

\begin{proof}
We noted in the proof of Lemma \ref{Lemma Identify Pi n Pi n B} that $W^n$ has the same law as $Z^n$ in \eqref{eq: Zn defn 1}. For the purpose of taking the expectation in the lemma, we may therefore assume that for independent Brownian motions $(B^j)_{j\in V_n}$,
\begin{equation}\label{eq Wnj Ito}
W^{n,j}_t = \sum_{k\in V_n}\left(c^{n,k}(t)B^{(j-k)\modd V_n}_t - \int_0^t B^{(j-k)\modd V_n}_s\frac{d}{ds}c^{n,k}(s)ds\right).
\end{equation}
We first establish that there exists a constant $\breve{C}$ such that for all $n\in \Z^+$,
\begin{equation}\label{eq claim 1 to establish}
\Exp\bigg[ \exp\big(\breve{C}\sum_{j\in V_n}\norm{W^{n,j}}_T^2\big)\bigg] \leq \exp\left( \grave{C} |V_n|\right),
\end{equation}
where $\grave{C}$ is the constant in Lemma \ref{Lemma bound B}. Let 
\begin{align*}
\upsilon^{n,j} :=& \norm{c^{n,j}}_T + T\norm{\frac{d}{dt}c^{n,j}}_T,\\
K =& \sup_{n\geq 1}\sum_{j\in V_n}\upsilon^{n,j}.
\end{align*}
$K<\infty$ thanks to \eqref{eq: c norm absolute}, \eqref{assumption f absolute convergence} and Assumption \ref{Assumption Eta}. We observe from \eqref{eq Wnj Ito} that
\begin{align*}
\norm{W^{n,j}}_T \leq & \sum_{k\in V_n}\upsilon^{n,k} \norm{B^{(j-k)\modd V_n}}_T \\
\norm{W^{n,j}}^2_T \leq & K \sum_{k\in V_n}\upsilon^{n,k} \norm{B^{(j-k)\modd V_n}}_T^2,
\end{align*}
using Jensen's Inequality. Thus
\begin{align*}
\exp\bigg(\frac{1}{4TK^2}\sum_{j\in V_n}\norm{W^{n,j}}_T^2\bigg) &\leq \exp\bigg(\frac{1}{4TK}\sum_{j,k\in V_n}\upsilon^{n,k}\norm{B^{(j-k)\modd V_n}}_T^2\bigg)\\
&\leq \exp\bigg(\frac{1}{4T}\sum_{j\in V_n}\norm{B^j}_T^2\bigg).
\end{align*}
Hence, making use of Lemma \ref{Lemma bound B},
\begin{equation*}
\Exp\bigg[\exp\bigg(\frac{1}{4TK^2}\sum_{j\in V_n}\norm{W^{n,j}}_T^2\bigg)\bigg] \leq \exp\left( \grave{C} |V_n|\right).
\end{equation*}
We have thus established \eqref{eq claim 1 to establish}.

We observe from \eqref{eq claim 1 to establish} that for all $n\in\Z^+$,
\begin{align*}
\Prob\bigg(\sum_{j\in V_n}\norm{W^{n,j}}_T - a |V_n| > 0\bigg) &\leq \exp\big(-a\breve{C}|V_n| \big)\Exp\bigg[\exp\big(\breve{C}\sum_{j\in V_n}\norm{W^{n,j}}_T^2\big)\bigg]\\
&\leq \exp\bigg(|V_n| \big(\grave{C}-a\breve{C} \big) \bigg),
\end{align*}
from which the lemma follows.
\end{proof}

\subsection{Proof of Theorem \ref{Theorem LDP PinB}}\label{Section Proof of Last Theorem}
Recall that $(B^j)_{j\in\Z^d}$ are independent Brownian Motions on $[0,T]$ and $\hat{\mu}^n(B)$ is the empirical measure. We denote the weak topology on $\mathcal{P}(\T^{\Z^d}_\lambda)$ (generated by the norm $\norm{\cdot}_\lambda$ on $\T^{\Z^d}_\lambda$) by $\tau_\lambda$. We recall the cylindrical topology on $\T^{\Z^d}$, which is generated by sets $O \subset \T^{\Z^d}$ such that $\pi^{V_m}O$ is open in $\T^{V_m}$ for some $m\in \Z^+$. We let $\tau_W$ be the weak topology on $\mathcal{P}(\T^{\Z^d})$ generated by the cylindrical topology on $\T^{\Z^d}$. It may be seen that the embedding $\T^{\Z^d}_\lambda \hookrightarrow \T^{\Z^d}$ is continuous and injective, and induces a continuous and injective embedding $\mathcal{P}(\T^{\Z^d}_\lambda) \hookrightarrow \mathcal{P}(\T^{\Z^d})$. Note that these embeddings are not necessarily closed. In a slight abuse of notation we identify $\Pi^n_B$ with its image law under this embedding, so that in other words we may also consider $\Pi^n_B$ to be in  $\mathcal{P}(\mathcal{P}(\T^{\Z^d}))$.

The following result is essentially already known.
\begin{theorem}\label{Theorem PinB LDP 1}
$(\Pi^n_B)_{n\in\Z^+}$ satisfy an LDP on $\mathcal{P}(\T^{\Z^d})$ with good rate function (with respect to the cylindrical topology $\tau_W$).
\end{theorem}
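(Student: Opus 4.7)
The plan is to recognise $\hat{\mu}^n(B)$ as the (periodised) empirical field of an IID $\T$-valued random field indexed by $\Z^d$, and to invoke a process-level Sanov-type theorem for such fields from the existing literature.

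Since $(B^j)_{j\in\Z^d}$ are independent copies of Wiener measure, the joint law of the whole field is the infinite product $\mathcal{W}^{\otimes\Z^d}$ on $\T^{\Z^d}$. The $V_n$-periodic interpolation used in the definition $\hat{\mu}^n(B)=|V_n|^{-1}\sum_{j\in V_n}\delta_{S^j\tilde{B}}$ is merely a device to force exact shift-invariance; asymptotically it should be exponentially equivalent to the non-periodic empirical field $\mu^n(B):=|V_n|^{-1}\sum_{j\in V_n}\delta_{S^j B}$, which is the classical object studied in the spatial large deviations literature.

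The proof would proceed in three steps. First, I would establish the LDP for $\mu^n(B)$ with good rate function given by the specific relative entropy $h(\mu \,|\, \mathcal{W}^{\otimes\Z^d})$, with value $+\infty$ outside the shift-invariant measures. This is a classical $\Z^d$ process-level Sanov result, following for example from \cite[Theorem 4.9]{baxter-jain:93} or from the results in \cite{chiyonobu-kusuoka:88,deuschel-stroock-etal:91}; alternatively it can be obtained by applying the Dawson--Gärtner projective limit theorem \cite[Theorem 4.6.1]{dembo-zeitouni:97} to finite-dimensional Sanov LDPs for the projections onto each $\T^{V_m}$. Second, I would verify that $\hat{\mu}^n(B)$ and $\mu^n(B)$ are exponentially equivalent with respect to the cylindrical topology $\tau_W$: for any bounded cylindrical test function $F$ depending only on coordinates in a fixed $V_m$, the two empirical averages differ only for those $j\in V_n$ for which $j+V_m\not\subset V_n$, and the number of such $j$ is $O(mn^{d-1})$, so the discrepancy in the pairing is $O(n^{-1})$, which is superexponentially small on the scale $|V_n|=(2n+1)^d$. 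By \cite[Theorem 4.2.13]{dembo-zeitouni:97} the LDP then transfers to $(\Pi^n_B)$ with the same rate function. Third, goodness of the rate function follows because the specific relative entropy relative to a product measure is lower semicontinuous and has tight (hence compact) sublevel sets in the cylindrical weak topology.

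The main obstacle is step two: the periodic interpolation causes some $S^j\tilde{B}$ to wrap around the boundary of $V_n$, so one must separate the $j$'s lying sufficiently far inside $V_n$ (for which the two empirical measures agree on any fixed cylindrical observable) from those in a thin boundary layer (whose contribution is controlled by the volume count $|V_n\setminus V_{n-m}|=O(mn^{d-1})$). This boundary estimate is routine once we are working in the cylindrical topology, which is precisely the reason the present theorem is formulated with respect to $\tau_W$ and not the stronger topology on $\mathcal{P}(\T^{\Z^d}_\lambda)$; the latter refinement is then obtained in Theorem \ref{Theorem PinW} by the separate perturbative argument using the maps $\Gamma^n$ and $\Gamma$.
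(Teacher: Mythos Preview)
Your proposal is correct and matches the paper's own proof essentially line for line: the paper invokes \cite[Theorem 1.3]{deuschel-stroock-etal:91} for the non-periodic empirical field $\bar{\mu}^n(B)$ and then cites \cite[Exercise 6.15]{rassoul2015course} for the equivalence with the periodised version, which is exactly your Steps~1 and~2 (your boundary-layer count $O(mn^{d-1})$ is the standard way that exercise is done). One small correction to your closing remark: the strengthening from $\tau_W$ to $\tau_\lambda$ for $(\Pi^n_B)$ is carried out in Theorem~\ref{Theorem LDP PinB} via the exponential-tightness argument of Proposition~\ref{Proposition Exp Tightness Pi W}, not via the maps $\Gamma^n,\Gamma$ --- those maps appear only in the passage from $\Pi^n_B$ to $\Pi^n_W$ in Theorem~\ref{Theorem PinW}.
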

\begin{proof}
Recall that $\Pi^n_B$ is the law of the periodic empirical measure $\hat{\mu}^n(B) = \frac{1}{|V_n|}\sum_{j\in V_n}\delta_{S^j \tilde{B}} \in \mathcal{P}(\T^{\Z^d})$, where $\tilde{B}^j :=B^{j\modd V_n}$. Let $\bar{\Pi}^n_B$ be the law of $\bar{\mu}^n(B) := \frac{1}{|V_n|}\sum_{j\in V_n}\delta_{S^j B}$, where $B = (B^j)_{j\in\Z^d} \in \T^{\Z^d}$. Notice that $\bar{\mu}^n(B)$ is not periodically interpolated and not (in general) invariant under shifts of the lattice. Since the $(B^j)_{j\in\Z^d}$ are independent, it is a consequence of \cite[Theorem 1.3]{deuschel-stroock-etal:91} that $(\bar{\Pi}^n_B)_{n\in\Z^+}$ satisfy an LDP with good rate function (with respect to the cylindrical topology $\tau_W$).

The equivalence of the LDPs (relative to the topology $\tau_W$) for $\bar{\mu}^n(B)$ and $\hat{\mu}^n(B)$ when $d=1$ is already known (see for instance \cite[Exercise 6.15]{rassoul2015course}). The proof easily generalises to the case $d\neq 1$.
\end{proof}
Since, the embedding $\mathcal{P}(\T^{\Z^d}_\lambda)\hookrightarrow \mathcal{P}(\T^{\Z^d})$ is continuous and injective, to prove Theorem \ref{Theorem LDP PinB} it suffices in light of Theorem \ref{Theorem PinB LDP 1} and \cite[Theorem 4.2.4]{dembo-zeitouni:97} that we prove that $(\Pi^n_B)_{n\in\Z^+}$ are exponentially tight relative to the topology $\tau_\lambda$ on $\mathcal{P}(\T^{\Z^d}_\lambda)$. This is stated in the following proposition.

\begin{proposition}\label{Proposition Exp Tightness Pi W}
For every $\alpha > 0$, there exists a compact (relative to the topology $\tau_\lambda$) set $\bar{\mathcal{K}}\subset \mathcal{P}(\T_\lambda^{\Z^d})$ such that
\begin{equation}\label{eq exp tight compact 2}
\lsup{n}\frac{1}{|V_n|}\log\Pi^n_B(\bar{\mathcal{K}}^c) \leq - \alpha.
\end{equation}
\end{proposition}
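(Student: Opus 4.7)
The plan is to construct a single coercive nonnegative lower semicontinuous function $\phi : \T^{\Z^d}_\lambda \to [0,\infty]$ such that the sublevel sets $\bar{\mathcal{K}}_M := \{\mu \in \mathcal{P}(\T^{\Z^d}_\lambda) : \int \phi\, d\mu \leq M\}$ are compact in $\mathcal{P}(\T^{\Z^d}_\lambda)$ for every $M > 0$, and then bound $\Pi^n_B(\bar{\mathcal{K}}_M^c)$ via an exponential Chebyshev inequality applied to $\int \phi \, d\hat{\mu}^n(B)$, exploiting the independence of $(B^j)_{j\in V_n}$. Given such a $\phi$, for each $\alpha > 0$ I would choose $M$ large enough that $\Pi^n_B(\bar{\mathcal{K}}_M^c) \leq e^{-\alpha |V_n|}$ for all sufficiently large $n$, which gives \eqref{eq exp tight compact 2}.

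Concretely I would set $\phi(X) := \sum_{j\in\Z^d}\gamma^j\psi(X^j)$, with $\psi:\T \to [0,\infty]$ a lower semicontinuous function with compact sublevel sets, satisfying $\psi(X) \geq \norm{X}_T^2$ and $\Exp[e^{c\psi(B^0)}] < \infty$ for some $c > 0$; the standard choice is $\psi(X) := \norm{X}_T^2 + [X]_{C^\beta}^2$ for any $\beta \in (0,1/2)$, with the exponential moment bound supplied by Fernique's theorem. The weights $\gamma^j > 0$ must satisfy $\gamma_* := \sum_j \gamma^j < \infty$ and $\lambda^j/\gamma^j \to 0$ as $|j|\to\infty$; since $\lambda^j \to 0$ such a sequence can always be constructed (e.g.\ of the form $\gamma^j = \lambda^j g(|j|)$ for a slowly diverging function $g$). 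Compactness of $\{\phi \leq M\}$ in $\T^{\Z^d}_\lambda$ then reduces to two pieces: a per-coordinate Arzel\`a--Ascoli bound $\psi(X^j) \leq M/\gamma^j$, and a uniform tail estimate $\sum_{j\notin V_m}\lambda^j\norm{X^j}_T^2 \leq M\sup_{j\notin V_m}(\lambda^j/\gamma^j) \to 0$. Combining Markov's inequality $\mu(\{\phi>M'\})\leq M/M'$ with Prokhorov's theorem and the lower semicontinuity of $\phi$, the set $\bar{\mathcal{K}}_M$ is indeed compact.

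The key computation exploits the shift structure of the empirical measure. From $(S^j\tilde B)^k = B^{(k+j)\modd V_n}$, by rearranging the double sum and observing that for fixed $l \in V_n$ the map $(j,m) \mapsto l-j+(2n+1)m$ from $V_n \times \Z^d$ to $\Z^d$ is a bijection, one obtains
\[
\int \phi\, d\hat{\mu}^n(B) = \frac{\gamma_*}{|V_n|}\sum_{l\in V_n}\psi(B^l).
\]
Exponential Chebyshev together with independence of $(B^l)_{l\in V_n}$ then gives
\[
\Prob\Bigl(\int\phi\, d\hat{\mu}^n(B) > M\Bigr) \leq e^{-cM|V_n|}\bigl(\Exp[e^{c\gamma_*\psi(B^0)}]\bigr)^{|V_n|} = e^{|V_n|(-cM + \log E)},
\]
with $E < \infty$ for $c > 0$ small enough; choosing $M$ so that $cM-\log E > \alpha$ closes the argument.

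The main obstacle I foresee is calibrating the weights $\gamma^j$ to simultaneously (i) dominate $\lambda^j$ in the tail (ensuring the topological compactness of $\{\phi\leq M\}$) and (ii) remain summable (so that the exponential moment bound produces the factor $E^{|V_n|}$ rather than something growing faster in $|V_n|$). This balancing relies only on $\lambda^j\to 0$, but the existence of the desired $\gamma^j$ needs a somewhat delicate diagonal construction; verifying the coercivity and finite exponential moments of $\psi$ on Wiener space is otherwise standard.
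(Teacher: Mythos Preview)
Your argument is correct and takes a genuinely different route from the paper's proof. The paper does \emph{not} build a single coercive function on $\T^{\Z^d}_\lambda$. Instead it first invokes the LDP for $(\Pi^n_B)$ in the cylindrical topology $\tau_W$ (Theorem~\ref{Theorem PinB LDP 1}) to obtain a $\tau_W$-compact set $\mathcal{K}$ with exponentially small complement, and then intersects $\mathcal{K}$ with a family of ``tail conditions'' $\mathcal{C}(\delta,\gamma)$ built from reweighted norms $\norm{\cdot}_{T,\beta_m}$ (the $\beta_m$ put mass $\xi(m)\lambda^j$ outside $V_m$ with $\xi(m)\to\infty$). Lemma~\ref{Lemma Compact B K} shows that $\mathcal{K}\cap\mathcal{C}(\delta,\gamma)$ lies inside a $\tau_\lambda$-compact set, and the same periodicity trick you use collapses $\sum_{j\in V_n}\norm{S^j\tilde B}^2_{T,\beta_m}$ to $\sum_{l\in V_n}\norm{B^l}_T^2$, after which only the exponential moment of $\norm{B^0}_T^2$ (Lemma~\ref{Lemma bound B}, via the reflection principle) is needed.

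The trade-offs: your approach is self-contained and does not require Theorem~\ref{Theorem PinB LDP 1} as input, at the cost of invoking Fernique's theorem for the H\"older seminorm of Brownian motion and a small diagonal construction for the weights $\gamma^j$ (the classical fact that any summable positive sequence admits a slowly diverging multiplier that keeps it summable). The paper's approach is more modular---it separates ``per-coordinate compactness'' (outsourced to the known cylindrical LDP) from ``uniform tail control'' (the $\beta_m$ weights)---and uses only the reflection principle rather than Fernique. Both proofs hinge on the same periodicity identity that turns $\int\phi\,d\hat\mu^n(B)$ into an i.i.d.\ sum over $V_n$.
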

The rest of this section is directed towards the proof of Proposition \ref{Proposition Exp Tightness Pi W}. We introduce the following alternative set of weights $(\beta_m)_{m\in\Z^+}$, $\beta_m := (\beta_m^j)_{j\in\Z^d}$, which put less and less weight on cubes in $\Z^d$ centred at $0$. For $m\in \Z^+$, let $(\beta^j_m)_{j\in\Z^d}$ be such that $\beta^j_m > 0$ for all $j\in\Z^d$, and for some sequence $\xi(m) \subset \R^+$ with $\xi(m)\to\infty$ as $m\to \infty$,
\begin{align}
\beta^j_m &= \xi(m)\lambda^j \text{ for all }j\notin V_m,\label{defn:beta1} \\
\beta^j_m &=|V_m|^{-1} \bigg( 1 -  \xi(m)\sum_{k\notin V_m}\lambda^k\bigg) \text{ for all }j\in V_m,\label{defn:beta2} \\
\sum_{j\in\Z^d}\beta^j_m &= 1.\label{defn:beta3}
\end{align}
Note that \eqref{defn:beta3} follows directly from \eqref{defn:beta1}-\eqref{defn:beta2}. We note that it is possible to find the required sequence $\xi(m)$ for the following reason. For any $M \in \Z^+$, choose $k,l\in \Z^+$, $k \leq l-1$ such that $\sum_{j\notin V_{k}}\lambda^j \leq \frac{1}{2M}$ and $\sum_{j\notin V_{l}}\lambda^j \leq \frac{1}{2(M+1)}$. We may then stipulate that for all $p \in [k,l-1]$, $\xi(p) = M$, and $\xi(l) = M+1$. This process may be continued iteratively as $M\to \infty$.

For $r > 0$, define 
\begin{equation}\label{defn Yrm}
\mathcal{Y}^r_m = \lbrace X \in \T^{\Z^d}_\lambda | \norm{X}^2_{T,\beta_m} \leq r\rbrace,
\end{equation}
where $\norm{X}^2_{T,\beta_m} := \sum_{j\in \Z^d} \beta^j_m \norm{X^j}^2_T$. This set is closed in $\T^{\Z^d}_\lambda$ because the norms $\norm{\cdot}_{\lambda}$ and $\norm{\cdot}_{\beta_m}$ are equivalent - i.e. for each $m$ there must exist constants $\underline{C}_m,\bar{C}_m$ such that $\underline{C}_m\norm{\cdot}_{T,\lambda} \leq \norm{\cdot}_{T,\beta_m} \leq \bar{C}_m\norm{\cdot}_{T,\lambda}$. Define
\[
\mathcal{Y}^r = \cap_{m\in \Z^+}\mathcal{Y}^r_m.
\]
Being the intersection of closed sets, $\mathcal{Y}^r$ is also closed (in the topology of $\T^{\Z^d}_\lambda)$. Let $\delta := (\delta_i)_{i\in\Z^+}$ and $\gamma := (\gamma_i)_{i\in\Z^+}$, be such that $\delta_i \to 0$, $\delta_i < 1/2$ for all $i\in\Z^+$ and $\gamma_i \to \infty$.  Let 
\begin{equation}\label{defn c delta gamma}
 \mathcal{C}(\delta,\gamma)  = \big\lbrace \mu \in  \mathcal{P}(\T^{\Z^d}_\lambda):\mu\big( \mathcal{Y}^{\gamma_i} \big) \geq 1 - \delta_i \text{ for all }i\in\Z^+\big\rbrace.
\end{equation}
It is straightforward to show that $\mathcal{B}(\T^{\Z^d}_\lambda) \subset \mathcal{B}(\T^{\Z^d})$ (the latter is the $\sigma$-algebra generated by the cylindrical topology). Hence any $\mu \in \mathcal{P}(\T^{\Z^d})$ such that $\mu(\T^{\Z^d}_\lambda) = 1$ may be considered to be in $\mathcal{P}(\T^{\Z^d}_\lambda)$.
\begin{lemma}\label{Lemma Compact B K}
Suppose that $\mathcal{U}\subset \mathcal{P}(\T^{\Z^d})$ is compact in $\tau_W$ and that $\mathcal{U}\cap\mathcal{C}(\delta,\gamma)$ is nonempty. Then there exists a set $\mathcal{V}$ such that $\mathcal{U}\cap\mathcal{C}(\delta,\gamma)\subseteq\mathcal{V} \subseteq \mathcal{C}(\delta,\gamma) \subset \mathcal{P}(\T^{\Z^d}_\lambda)$, and $\mathcal{V}$ is compact in the topology $\tau_\lambda$. 
\end{lemma}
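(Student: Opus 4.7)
The plan is to define $\mathcal{V} := \mathcal{U} \cap \mathcal{C}(\delta,\gamma)$, identifying elements of $\mathcal{U}$ that lie in the image of the continuous injective embedding $\mathcal{P}(\T^{\Z^d}_\lambda) \hookrightarrow \mathcal{P}(\T^{\Z^d})$, and to show that $\mathcal{V}$ is compact in $\tau_\lambda$ via Prokhorov's theorem in the Polish space $\T^{\Z^d}_\lambda$. The required inclusions $\mathcal{U}\cap\mathcal{C}(\delta,\gamma) \subseteq \mathcal{V} \subseteq \mathcal{C}(\delta,\gamma)$ then hold by construction.

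First I would verify $\tau_\lambda$-closedness of $\mathcal{V}$. Each $\mathcal{Y}^r_m$ is a sublevel set of $\norm{\cdot}_{T,\beta_m}$, a norm equivalent to $\norm{\cdot}_{T,\lambda}$ for each fixed $m$, so $\mathcal{Y}^r_m$ is closed in $\T^{\Z^d}_\lambda$, and consequently so is the intersection $\mathcal{Y}^r$. The Portmanteau theorem then gives upper semicontinuity of $\mu \mapsto \mu(\mathcal{Y}^{\gamma_i})$ on $\mathcal{P}(\T^{\Z^d}_\lambda)$, making $\mathcal{C}(\delta,\gamma)$ closed in $\tau_\lambda$. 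The preimage of $\mathcal{U}$ under the continuous embedding $\mathcal{P}(\T^{\Z^d}_\lambda) \hookrightarrow \mathcal{P}(\T^{\Z^d})$ is also closed in $\tau_\lambda$, since $\mathcal{U}$ is closed in $\tau_W$ (compact in a Hausdorff space), and so $\mathcal{V}$ itself is $\tau_\lambda$-closed.

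Next I would verify tightness of $\mathcal{V}$ in $\mathcal{P}(\T^{\Z^d}_\lambda)$. Fix $\epsilon > 0$. Since $\T^{\Z^d}$ with the cylindrical topology is a countable product of Polish spaces, it is itself Polish, and $\mathcal{U}$ being $\tau_W$-compact furnishes via Prokhorov a cylindrical-compact $K_\epsilon \subset \T^{\Z^d}$ with $\mu(K_\epsilon) \geq 1-\epsilon/2$ for every $\mu \in \mathcal{U}$. Picking $i$ with $\delta_i \leq \epsilon/2$, every $\mu \in \mathcal{V}$ then satisfies $\mu(K_\epsilon \cap \mathcal{Y}^{\gamma_i}) \geq 1-\epsilon$.

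The technical core, which I expect to be the main obstacle, is to show that $K_\epsilon \cap \mathcal{Y}^{\gamma_i}$ is compact in the $\norm{\cdot}_{T,\lambda}$-topology on $\T^{\Z^d}_\lambda$. Since the cylindrical topology is the product topology, each coordinate projection of $K_\epsilon$ is compact in $\T$; a diagonal extraction therefore produces from any $(X^{(n)}) \subset K_\epsilon \cap \mathcal{Y}^{\gamma_i}$ a subsequence $(X^{(n_k)})$ converging coordinatewise in $\T$ to some $X \in K_\epsilon$. To promote coordinatewise convergence to convergence in $\norm{\cdot}_{T,\lambda}$, fix $m$ and use the defining bound $\sum_{j\notin V_m}\lambda^j\norm{X^{(n_k),j}}_T^2 \leq \gamma_i/\xi(m)$ of $\mathcal{Y}^{\gamma_i}_m$; Fatou's lemma on counting measure carries the same bound to $X$, so the tail $\sum_{j\notin V_m}\lambda^j\norm{X^{(n_k),j}-X^j}_T^2 \leq 4\gamma_i/\xi(m)$, which tends to $0$ as $m \to \infty$ since $\xi(m)\to\infty$. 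The finite head $\sum_{j\in V_m}\lambda^j\norm{X^{(n_k),j}-X^j}_T^2$ tends to $0$ as $k\to\infty$ by coordinatewise convergence in $\T$. Closedness of $\mathcal{Y}^{\gamma_i}$ in $\T^{\Z^d}_\lambda$ places $X$ in $K_\epsilon \cap \mathcal{Y}^{\gamma_i}$, giving sequential (hence, by metrizability of $\T^{\Z^d}_\lambda$, topological) compactness. Prokhorov's theorem in $\T^{\Z^d}_\lambda$ then yields the desired $\tau_\lambda$-compactness of $\mathcal{V}$.
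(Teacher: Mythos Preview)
Your proof is correct and shares the paper's technical core: both reduce to showing that $K\cap\mathcal{Y}^{\gamma_i}$ is compact in $\T^{\Z^d}_\lambda$ for $K$ compact in the cylindrical topology, and both do this by extracting a coordinatewise-convergent subsequence and controlling the tail $\sum_{j\notin V_m}\lambda^j\norm{\cdot}_T^2$ uniformly via the $\beta_m$-norm bound, using that $\xi(m)\to\infty$.

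The one genuine difference is your choice of $\mathcal{V}$. You take $\mathcal{V}=\mathcal{U}\cap\mathcal{C}(\delta,\gamma)$ itself, whence the inclusions $\mathcal{U}\cap\mathcal{C}(\delta,\gamma)\subseteq\mathcal{V}\subseteq\mathcal{C}(\delta,\gamma)$ are tautological and the closedness argument goes through Portmanteau for $\mathcal{C}(\delta,\gamma)$ and continuity of the embedding for $\mathcal{U}$. The paper instead enlarges to $\mathcal{V}=\{\mu:\mu(K_i\cap\mathcal{Y}^{\gamma_i})\geq 1-2\delta_i\text{ for all }i\}$, which makes tightness formally immediate once the $\tilde K_i$ are shown compact, at the cost of an extra closedness step and a looser fit with the stated inclusion $\mathcal{V}\subseteq\mathcal{C}(\delta,\gamma)$ (indeed their $\mathcal{V}$ only gives $\mu(\mathcal{Y}^{\gamma_i})\geq 1-2\delta_i$, though this slack is harmless for how the lemma is used downstream). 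Your route is slightly cleaner for the lemma as stated; the paper's route front-loads the Prokhorov structure. Either way the substantive work is identical.
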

\begin{proof}
By Prokhorov's Theorem, since $\mathcal{U}$ is compact in $\tau_W$, there must exist compact subsets $(K_i)_{i \in \Z^+}$, $K_i \subset \T^{\Z^d}$ such that for all $\mu \in \mathcal{U}$, $\mu(K_i^c) < \delta_i$. Define 
\[
\tilde{K}_i = K_i \cap \mathcal{Y}^{\gamma_i} \subset \T^{\Z^d}_\lambda.
\]
Since $\mathcal{U}\cap\mathcal{C}(\delta,\gamma)$ is nonempty by assumption, for some $\mu \in \mathcal{U}\cap\mathcal{C}(\delta,\gamma)$, $\mu(K_i) \geq 1-\delta_i$ and $\mu\big(\mathcal{Y}^{\gamma_i}\big) \geq 1 - \delta_i$. Since $\delta_i < 1/2$, this means that $\tilde{K}_i$ is nonempty.

Define $\mathcal{V} = \lbrace \mu \in \mathcal{P}(\T^{\Z^d}_\lambda): \mu(\tilde{K}_i) \geq 1- 2\delta_i \text{ for all }i\in\Z^+\rbrace$. It follows from the definitions that $\mathcal{U}\cap\mathcal{C}(\delta,\gamma)\subseteq \mathcal{V}$. We first prove that $\mathcal{V}$ is tight, and then that it is closed. It follows from these two facts (in light of Prokhorov's Theorem) that $\mathcal{V}$ is compact.

\textit{Step 1: $\mathcal{V}$ is tight.}

It suffices by Prokhorov's Theorem for us to prove that $\tilde{K}_i$ is compact in the topology of $\T^{\Z^d}_\lambda$.
Let $\lbrace X_{k}\rbrace_{k=1}^\infty$ be a sequence in $\tilde{K}_i$. Fix $\epsilon >0$, and choose $m$ large enough that $\gamma_i / \xi(m) \leq \epsilon / 8$. Then for all $k \in \Z^+$, since $X_k \in \mathcal{Y}^{\gamma_i}_m$
\begin{align*}
\sum_{j\notin V_m}\lambda^j \norm{X^j_k}^2_T &= \frac{1}{\xi(m)}\sum_{j\notin V_m}\beta^j_m\norm{X^j_k}^2_T \\ &\leq \frac{1}{\xi(m)} \norm{X_k}^2_{T,\beta_m}  \\ &\leq \frac{\gamma_i}{\xi(m)} \leq \frac{\epsilon}{8}.
\end{align*}
Since $K_i$ is compact in the cylindrical topology of $\T^{\Z^d}$, we can choose a subsequence $\big(\tilde{X}_{k}\big)_{k\in\Z^+}$ such that 
\[
\sup_{k_1,k_2\in\Z^+, j\in V_m}\norm{\tilde{X}^j_{k_1} - \tilde{X}^j_{k_2}}_T^2 \leq \frac{\epsilon}{2}.
\]
It follows that
\begin{align*}
\sup_{k_1,k_2\in \Z^+}\norm{\tilde{X}_{k_1}-\tilde{X}_{k_2}}_{T,\lambda}^2 &\leq \sum_{j\in V_m}\lambda^j\sup_{k_1,k_2\in\Z^+}\norm{\tilde{X}^j_{k_1}-\tilde{X}^j_{k_2}}^2_T +\sum_{j\notin V_m}\lambda^j \norm{\tilde{X}^j_{k_1}-\tilde{X}^j_{k_2}}^2_T \\
&\leq \frac{\epsilon}{2}\sum_{j\in V_m}\lambda^j+2\sum_{j\notin V_m}\lambda^j \big(\norm{\tilde{X}^j_{k_1}}_T^2+\norm{\tilde{X}^j_{k_2}}^2_T\big)\\
&\leq \frac{\epsilon}{2} + \frac{\epsilon}{2} = \epsilon.
\end{align*}
We may then repeat this process, obtaining a subsequence $(\bar{X}_j)_{j \in \Z^+}$ of $(\tilde{X}_k)_{k\in\Z^+}$, such that  
\[
\sup_{j_1,j_2\in \Z^+}\norm{\bar{X}_{j_1}-\bar{X}_{j_2}}^2_{T,\lambda} \leq \frac{\epsilon}{2}. 
\]
In this way we obtain a Cauchy sequence $\tilde{X}_1,\bar{X}_1,\ldots $, which must converge to a limit in $\T^{\Z^d}_\lambda$. 

It therefore remains for us to prove that $\tilde{K}_i$ is closed in the topology of $\mathcal{T}^{\Z^d}_\lambda$. Since $\mathcal{Y}^{\gamma_i}$ is closed in this topology, it suffices for us to prove that $K_i \cap \T^{\Z^d}_\lambda$ is closed in the topology of $\T^{\Z^d}_\lambda$. Now let $L_m = \lbrace x\in \T^{\Z^d}: \pi^{V_m}x \in \pi^{V_m}K_i\rbrace$. The compactness of $K_i$ means that $L_m$ must be closed in the cylindrical topology on $\T^{\Z^d}$. In turn, it is not too difficult to see that $L_m \cap \T^{\Z^d}_\lambda$ is closed in $\T^{\Z^d}_\lambda$. Therefore $K_i\cap\T^{\Z^d}_\lambda = \cap_{m\in\Z^+}\big(L_m\cap\T^{\Z^d}_\lambda\big)$, being the infinite intersection of closed sets in $\T^{\Z^d}_\lambda$, is closed. 

\textit{Step 2: $\mathcal{V}$ is closed.}

We see from the definition that $\mathcal{V}$ is the intersection of sets of the form $\mathcal{V}_i := \lbrace \mu\in\mathcal{P}(\T^{\Z^d}_\lambda):\mu(\tilde{K}_i) \geq 1- 2\delta_i\rbrace$. Since $\tilde{K}_i$ is closed in the topology of $\mathcal{T}^{\Z^d}_\lambda$, each $\mathcal{V}_i$ is closed in the topology of $\mathcal{P}(\T^{\Z^d}_\lambda)$, and therefore the infinite intersection is closed.
\end{proof}
\begin{proof}[Proof of Proposition \ref{Proposition Exp Tightness Pi W}]
Since, thanks to Theorem \ref{Theorem PinB LDP 1}, $(\Pi^n_B)_{n\in\Z^+}$ satisfy an LDP relative to the topology $\tau_W$ over $\mathcal{P}(\T^{\Z^d})$, through \cite[Exercise 1.2.19]{dembo-zeitouni:97}, they must be exponentially tight relative to this topology. This means that for each $\alpha > 0$, there must exist a set $\mathcal{K} \subset \mathcal{P}(\T^{\Z^d})$, compact in the topology $\tau_W$, such that
\begin{equation}\label{eq exp tight compact 3}
\lsup{n}\frac{1}{|V_n|}\log\Pi^n(\mathcal{K}^c) = \lsup{n}\frac{1}{|V_n|}\log \Prob( \hat{\mu}^n(B) \notin \mathcal{K} )  \leq - \alpha.
\end{equation}
Let $(\delta_i,\gamma_i)_{i\in \Z^+}$ be two sequences such that $\delta_i \in (0,1/2)$, $\delta_i \to 0$ and $\gamma_i \to \infty$. Assuming for the moment that $\mathcal{K} \cap \mathcal{C}(\delta,\gamma)$ is nonempty (recall that $\mathcal{C}(\delta,\gamma)$ is defined in \eqref{defn c delta gamma}), define $\tilde{\mathcal{K}}\subset\mathcal{P}(\T^{\Z^d}_\lambda)$ to be the compact set given in Lemma \ref{Lemma Compact B K} such that $\mathcal{K} \cap \mathcal{C}(\delta,\gamma) \subseteq \tilde{\mathcal{K}} \subseteq  \mathcal{C}(\delta,\gamma)$. Our aim is to show that
\begin{equation}\label{eq : exp tightness to show}
\lsup{n}\frac{1}{|V_n|}\log\Prob( \hat{\mu}^n(B) \notin \tilde{\mathcal{K}} ) \leq -\alpha.
\end{equation}
Observe that
\begin{equation}\label{eq bound two terms}
\Prob( \hat{\mu}^n(B) \notin \tilde{\mathcal{K}} ) \leq \Prob( \hat{\mu}^n(B) \notin \mathcal{K} ) + \Prob(\hat{\mu}^n(B) \notin \mathcal{C}(\delta,\gamma)). 
\end{equation}
Now $\Prob( \hat{\mu}^n(B) \notin \mathcal{K} )\to 0$ as $n\to \infty$ by \eqref{eq exp tight compact 3}, and we will see that  $\Prob(\hat{\mu}^n(B) \notin \mathcal{C}(\delta,\gamma)) \to 0$ as $n\to \infty$. This means that $\mathcal{K} \cap \mathcal{C}(\delta,\gamma)$ must be nonempty: which justifies our previous assumption of this. 

We now find a more precise bound on $\Prob(\hat{\mu}^n(B) \notin \mathcal{C}(\delta,\gamma))$. We claim that
\begin{equation}\label{eqn exp tight claim 1}
\Prob\big(\hat{\mu}^n(B) \notin \mathcal{C}(\delta,\gamma)\big) \leq \sum_{i=1}^\infty \Prob\bigg( \sum_{j\in V_n}\norm{B^j}_T^2 > |V_n|\delta_i\gamma_i\bigg).
\end{equation}
As previously, write $\tilde{B} \in \T^{\Z^d}_\lambda$ to be the $V_n$-periodic interpolant of $(B^j)_{j\in V_n}$, i.e. such that $\tilde{B}^k := B^{k\modd V_n}$ for all $k\in \Z^d$. Now if $\hat{\mu}^n(B) \notin \mathcal{C}(\delta,\gamma)$, then there must exist $i,m\in \Z^+$ such that $\frac{1}{|V_n|} \left\lbrace\# j \in V_n \text{ s.t }\norm{S^j \tilde{B}}^2_{T,\beta_m} >\gamma_i\right\rbrace >\delta_i$. This implies that 
\begin{equation}\label{eq exp tight temp 2}
\sum_{j\in V_n} \norm{S^j \tilde{B}}^2_{T,\beta_m}> |V_n| \delta_i \gamma_i.
\end{equation}
Now, for any $m\in\Z^+$,
\begin{align*}
\sum_{l\in V_n} \norm{S^l \tilde{B}}^2_{T,\beta_m} &= \sum_{j\in V_n}\sum_{k\in\Z^d}\beta^k_m \norm{B^{(j+k)\modd V_n}}^2_T.
 \end{align*}
 Now for any $l\in V_n$ and $k\in\Z^d$, there is a unique $j\in V_n$ such that $(j+k)\modd V_n = l$. What this means is that
 \begin{equation*}
  \sum_{j\in V_n,k\in\Z^d}\beta^k_m \norm{B^{(j+k)\modd V_n}}^2_T = \sum_{l\in V_n,k\in\Z^d}\beta^k_m\norm{B^l}^2_T = \sum_{l\in V_n}\norm{B^l}_T^2.
 \end{equation*}
 Observe that the last equation is independent of $m$. We may thus infer that
\begin{align}
\Prob\left(\hat{\mu}^n(B) \notin \mathcal{C}(\delta,\gamma)\right) &\leq \sum_{i=1}^\infty \Prob\left(\sum_{j\in V_n} \norm{S^j \tilde{B}}^2_{T,\beta_m} > |V_n| \delta_i \gamma_i\right)\nonumber \\
&= \sum_{i=1}^\infty \Prob\bigg(\sum_{j\in V_n} \norm{B^j}^2_T > |V_n| \delta_i\gamma_i\bigg).\label{eq: temp reference 12}
\end{align}
We have thus established our claim \eqref{eqn exp tight claim 1}.

Let $\gamma_p \delta_p = \nu_p$. Then, letting $a= \frac{1}{4T}$, by  Chebyshev's Inequality,
\begin{align*}
\Prob\bigg(\sum_{j\in V_n}\norm{B^j}_T^2 >  |V_n|\nu_p \bigg) &= \Prob\bigg(a\sum_{j\in V_n}\norm{B^j}_T^2 >a |V_n|\nu_p \bigg) \\
&\leq \exp(-a |V_n|\nu_p)\Exp\left[\exp\big(a\sum_{j\in V_n}\norm{B^j}_T^2\big)\right] \\
&\leq \exp\big(a|V_n|\big(\grave{C} - \nu_p\big)\big),
\end{align*}
thanks to Lemma \ref{Lemma bound B}.
We may assume that $\delta_p$ and $\gamma_p$ are such that $\nu_p =\grave{C}+\frac{p\alpha}{a}$. We then find that for all $p\geq 1$,
\begin{equation*}
\Prob\bigg(\sum_{j\in V_n} \norm{B^j}^2_T > |V_n| \delta_{p}\gamma_{p}\bigg) \leq \exp\big(-p\alpha |V_n|\big).
\end{equation*}
Hence using the formula for the summation of a geometric sequence, 
\begin{align}
 \sum_{p=1}^\infty \Prob\bigg(\sum_{j\in V_n} \norm{B^j}^2_T > |V_n| \delta_p\gamma_p\bigg) &\leq \sum_{p=1}^\infty \exp\big(-p\alpha |V_n|\big)\nonumber\\
 &\leq \bigg( 1- \exp(-\alpha |V_n|)\bigg)^{-1}\exp(-\alpha |V_n|)\nonumber \\
 &\leq 2\exp(-\alpha |V_n|),\label{eq bound two terms 2}
\end{align}
for large enough $n$. This means that, through \eqref{eq exp tight compact 3}, \eqref{eq bound two terms} and \eqref{eq bound two terms 2},
\begin{align*}
&\lsup{n}\frac{1}{|V_n|}\log\Prob( \hat{\mu}^n(B) \notin \tilde{\mathcal{K}} )\\ &\leq 
 \lsup{n}\frac{1}{|V_n|}\log\bigg(\Pi^n(\mathcal{K}^c) + \Pi^n\big(\hat{\mu}^n(B) \notin \mathcal{C}(\delta,\gamma)\big)\bigg) \\
 &\leq \lsup{n}\frac{1}{|V_n|}\log\left(3\exp(-|V_n|\alpha)\right)= -\alpha.
\end{align*}
We have thus proved \eqref{eq exp tight compact 2}, as required.
\end{proof}

\section{An Application: A Fitzhugh-Nagumo Neural Network with Chemical Synapses, subject to Correlated Noise}\label{Section Example}

In this section we outline an example of a model satisfying \eqref{eq:fundamentalmult} and the assumptions of Section \ref{Section Assumptions}. We take the internal dynamics to be that of the Fitzhugh-Nagumo model, the interaction terms to be that of chemical synapses with the maximal conductances evolving according to a learning rule, and the noise to be the correlated martingale of the previous section. We take $d \in \lbrace 1,2,3\rbrace$. For $j\in V_n$, we have
\begin{align}
&dv^j_t =  dW^{n,j}_t\label{eq: fitzhugh 1}\\ &+\bigg(v^j_t - \frac{1}{3}(v^j_t)^3 - w^j_t + \sum_{k\in V_n}J^k_t(v^j,v^{(j+k)\modd V_n})f_1(v^j_t) f_2(v^{(j+k)\modd V_n}_t)\bigg)dt , \nonumber \\
&dw^j_t = \big(v^j_t + \mathfrak{a} - \mathfrak{c}w^j_t\big)dt.\label{eq: fitzhugh 2}
\end{align}
We take $w^j_0 = v^j_0 = 0$ as initial conditions. Here $\mathfrak{a}$ and $\mathfrak{c}$ are positive constants. The internal dynamics of the above equation is that of the famous Fitzhugh-Nagumo model \cite{fitzhugh:55,nagumo-arimoto-etal:62,fitzhugh:66,fitzhugh:69}.  This model distils the essential mathematical features of the Hodgkin-Huxley model, yielding excitation and transmission properties from the analysis of the biophysics of sodium and potassium flows. The variable $v$ is the `fast' variable which corresponds approximately to the voltage, and $w$ is the `slow' recovery variable which is dominant after the generation of an action potential.

We may reduce this to a one-dimensional equation by noticing that the solution of \eqref{eq: fitzhugh 2} is
\[
w^j_t = \mathfrak{c}^{-1}\int_0^t\exp\big(-\mathfrak{c}(t-s)\big) \big(v^j_s + \mathfrak{a}\big)ds.
\]
Hence we identify $U^j_t := v^j_t$ and
\begin{equation}
\mathfrak{b}_t(U^j) := U^j_t - \frac{1}{3}(U^j_t)^3 +\mathfrak{c}^{-1}\int_0^t\exp\big(-\mathfrak{c}(t-s)\big) \big(U^j_s + \mathfrak{a}\big)ds.
\end{equation}

The interaction term is a simplification of the chemical synapse models in \cite{destexhe-mainen-etal:94,ermentrout-terman:10}. It can be seen that the interaction has been decomposed into the multiplication of three terms. The terms $(J^k(\cdot,\cdot))_{k\in V_n}$ represent the maximal conductances, which are assumed to evolve according to the learning rule outlined in the following section. $J^k_s(\cdot,\cdot)$ is taken to be globally Lipschitz and bounded by $\bar{J}^k$ (which satisfies \eqref{eq Jk convergence}). The functions $f_1,f_2$ are the response functions and corresponds to the fraction of open channels: they are taken to be bounded by $\bar{f}$ and globally Lipschitz. 

The noise $W^n := (W^{n,j}_t)_{j\in V_n,t\in [0,T]}$ is taken to be the same as in Section \ref{Section Noise LDP}. In Theorem \ref{Theorem PinW} we proved a Large Deviation Principle for the sequence of laws $(\Pi^n_W)_{n\in\Z^+}$ of $\hat{\mu}^n\big(W^n\big)$. It is easy to check that the rest of the assumptions of Section \ref{Section Assumptions} are satisfied. 

\subsection{Learning Model of Synaptic Connections} \label{Section Learning Model}

One of the strengths of this paper is that the synaptic connections may evolve in time according to a learning rule. This is one way in which our work is different from mean-field models. An example of a possible model is the following classical Hebbian Learning model (refer to \cite{gerstner-kistler:02b} for a more detailed description, and in particular Equation 10.6). 

As stipulated previously, suppose that the maximal connection strength between neurons $j$ and $(j+k)$ is given by $\bar{J}^k \geq 0$. This is assumed to satisfy the condition\begin{equation}\label{eq Jk convergence}
\sum_{k\in \Z^d}\bar{J}^k < \infty.
\end{equation}

We assume that the `activity' of  neuron $j$ at time $t$ is given as $\mathfrak{v}(U^j_t)$. Here $\mathfrak{v}: \R^{2} \to \R$ is Lipschitz continuous, positive and bounded. The evolution equation is defined to be
\begin{equation}
\frac{d}{dt}J^k_t(U^{j},U^{j+k}) = J^{corr}\left(\bar{J}^k - J^k_t(U^j,U^{j+k})\right)\mathfrak{v}(U^j_t)\mathfrak{v}(U^{j+k}_t) - J^{dec}J^k_t(U^j,U^{j+k}).\label{eq:Lambdadefn}
\end{equation}
Here $J^{corr},J^{dec}$ are non-negative constants (if we let them be zero then we obtain weights which are constant in time). Initially, we stipulate that
\begin{equation}
J^k_0(U^j,U^{j+k}) := J^k_{ini}
\end{equation}
where $J^k_{ini} \in [0,\bar{J}^k]$ are constants stipulating the initial strength of the weights. It is straightforward to show that there is a unique solution to the above differential equation for all $U^j,U^{j+k} \in \C([0,T],\R^2)$. One may show that $J^k_t \leq \bar{J}^k$. In effect, the solution defines $J^k_t$ as a function $J^k_t: \C([0,t],\R^2) \times \C([0,t],\R^2) \to \R$, which can be shown to be uniformly Lipschitz in both of its variables, where $\C([0,t],\R^2)$ is endowed with the supremum norm.

Other nonlocal learning rules are possible: for a neuroscientific motivation see for example \cite{oja:82,miller-mackay:96,galtier:11}. In brief, one may assume that the synaptic connection from presynaptic neuron $k$ to postsnaptic neuron $j$ is a function of $\lbrace U^l\rbrace_{l-j\in V_m \text{ or } l-k\in V_m}$, for some fixed $m > 0$. We must then redefine the state variable at index point $j\in\Z^d$ to be the states of all the neurons in the cube centred at $j$ and of side length $(2m+1)$. One would then have to generalise the result of this paper to  having a multidimensional state vector (which would be straightforward).

\bibliographystyle{siam}
\bibliography{odyssee}
\end{document}